\numberwithin{equation}{section}
\newcommand{\R}{\mathbb{R}}
\newcommand{\HW}{\mathrm{HW}}
\newcommand{\X}{\mathbb{R}^d}
\newcommand{\Rd}{\R^d}
\newcommand{\XRd}{\R^{2d}}
\DeclarePairedDelimiter{\abs}{\lvert}{\rvert}
\DeclarePairedDelimiter{\norm}{\lVert}{\rVert}
\numberwithin{equation}{section}
\newtheorem{theorem}{Theorem}[section]
\newtheorem{lemma}[theorem]{Lemma}
\newtheorem{proposition}[theorem]{Proposition}
\newtheorem{corollary}[theorem]{Corollary}
\newtheorem{remark}[theorem]{Remark}
\let\oldabs\abs
\def\abs{\@ifstar{\oldabs}{\oldabs*}}
\let\oldnorm\norm
\def\norm{\@ifstar{\oldnorm}{\oldnorm*}}
\title[Moment Propagation for Magnetized Vlasov-Poisson]{Propagation of Velocity Moments for the Magnetized Vlasov-Poisson System with Space-Time Dependent Magnetic Fields}
\author{Immanuel Ben-Porat}
\address{University of Basel, Department of Mathematics and Computer Science, Spiegelgasse 1, 4051 Basel, Switzerland}
\email{immanuel.ben-porath@unibas.ch}
\author{Antoine Gagnebin}
\address{ETH Zürich, Department of Mathematics, Rämistrasse 101, 8092 Zürich, Switzerland}
\email{antoine.gagnebin@math.ethz.ch}
\author{Mikaela Iacobelli}
\address{ETH Zürich, Department of Mathematics, Rämistrasse 101, 8092 Zürich, Switzerland}
\email{mikaela.iacobelli@math.ethz.ch}
\author{Jonathan Junné}
\address{TU Delft, Delft Institute of Applied Mathematics, Mekelweg 4, 2628 CD Delft, Netherlands}
\email{j.junne@tudelft.nl}
\date{\today}
\begin{document}

\begin{abstract}
We prove that polynomial velocity moments of solutions to the 2D magnetized Vlasov-Poisson system and the 3D magnetized screened Vlasov-Poisson equation remain finite for all times, provided they are finite initially, even when the external magnetic field $B=B(t,x)$ is space-time dependent. We deduce propagation of regularity, thereby implying the existence of global classical solutions. Moreover, we prove optimal stability estimates in the kinetic-Wasserstein distance on par with the unmagnetised case.

\end{abstract}
\maketitle

\section{Introduction}
In this work, we study propagation of moments  for the \textit{magnetized} Vlasov-Poisson equation. The  Cauchy problem of the magnetized Vlasov-Poisson equation reads 
\begin{equation}
\left\{ \begin{array}{lc}
\partial_{t}f+v\cdot \nabla_{x}f-(\nabla_{x}K\star \rho_{f}+v\wedge B)\cdot\nabla_{v}f=0,\ \rho_{f}(t,x)\coloneqq\int_{\mathbb{R}^{d}}f(t,x, v)\ d v\\
f|_{t=0}=f^{\mathrm{in}}.
\end{array}\right.\label{MAGNETIC Vlasov Intro}
\end{equation}
The unknown is a time-dependent, non-negative bounded integrable function $f(t,\cdot,\cdot)\in L^{1}\cap L^{\infty}(\mathbb{R}^{d}_{x}\times \mathbb{R}^{d}_{ v})$ and $B=B(t,x)$ is a time-dependent vector field    
$B:[0,T]\times \mathbb{R}^{d}\rightarrow \mathbb{R}^{d}$ called the \textit{magnetic field}.  The potential $K$ is of Coulomb type in 2D and of Yukawa/screened-Coulomb type in 3D. More specifically, $K$ is given by
\begin{align}
K(x)\coloneqq \begin{cases}
\begin{array}{ll}
\pm \frac{1}{2\pi}\log(x) & d=2,\\
\pm \frac{e^{-\kappa\left\vert x\right\vert }}{4\pi\left\vert x\right\vert} & d=3, \ \kappa>0,
\end{array}
\end{cases}
\label{Coulomb def}
\end{align}
and 
\begin{align*}
    \nabla_{x}K\star \rho_{f} (t,x):= \int_{\mathbb{R}^{d}}\nabla_{x}K (x-y) \rho_{f}(t,y)\ dy.
\end{align*}
Note that in the case where $\kappa=0$ we recover the usual Coulomb potential. In the absence of a magnetic field, i.e., when $B=0$, equation \eqref{MAGNETIC Vlasov Intro} reduces to the classical Vlasov-Poisson equation  
\begin{equation}
\left\{ \begin{array}{lc}
\partial_{t}f+v\cdot\nabla_{x}f-\nabla_{x}K\star \rho_{f}\cdot\nabla_{v}f =0,\ \rho_{f}(t,x)\coloneqq\int_{\mathbb{R}^{d}}f(t,x, v)\ d v\\
f|_{t=0}=f^{\mathrm{in}}.
\end{array}\right.\label{nonMAGNETIC Vlasov Intro}
\end{equation}

Such systems arise naturally in plasma physics and galactic dynamics (see, e.g. \cite{plasma_2003, Galactic_2008, Ryutov_1999, Villani_notes}). In this work we mainly focus on plasmas, where $f(t,x,v)$ denotes the distribution function of particles at time $t$, position $x$, and velocity $v$.  A plasma is an ionized gas formed when a neutral gas is exposed to high temperatures or strong electromagnetic fields. The ionization process produces two species: light, negatively charged electrons and heavy, positively charged ions. These charged particles interact through long-range electromagnetic forces, giving rise to collective effects in the plasma. Because of the large mass difference between electrons and ions, the two species evolve on distinct time scales.

Electrons move much faster than ions. When studying the electron dynamics, it is thus common to assume that the ions are stationary. The Vlasov-Poisson equation ($\kappa = 0$) is a fundamental model for the dynamics of electrons in a plasma when collisions are neglected and ions are treated as a fixed background.

From the ions’ point of view, the electrons move much faster, and electron-electron collisions become relevant on the ion timescale. It is therefore natural to assume that the electron distribution has already relaxed to thermal equilibrium, so that the electron density follows a Maxwell-Boltzmann law. By linearizing this equilibrium, we obtain the screened Vlasov-Poisson equation ($\kappa > 0$), which provides an effective description of ion dynamics.

In the presence of a magnetic field $B$, the magnetized Vlasov-Poisson system \eqref{MAGNETIC Vlasov Intro} describes the evolution of a collection of charged particles interacting through the Coulomb or screened-Coulomb potential and subject to the additional influence of the external magnetic field $B$.

The existence and uniqueness theory of \eqref{nonMAGNETIC Vlasov Intro} is well understood, both for the cases where $\kappa=0$ (corresponding to Coulomb) and $\kappa>0$ (corresponding to screened Coulomb): the Vlasov-Poisson system has been the subject of extensive study over the past decades. Many results are now available concerning the existence of global classical and weak solutions under different assumptions on the initial data. The first local-in-time existence result in three dimensions was obtained by Rudolf \cite{Rudolf}. For weak solutions on the whole space, we refer to the work of Arsenev \cite{arsenev1975existence} (see also \cite{Bardos_Degond_1, Bardos_Degond_Golse, Horst_Hunze}). Global well-posedness of classical solutions on the whole space was later established in one dimension by Iordanskii \cite{iordanskii1961cauchy}, in two dimensions by Ukai and Okabe \cite{ukai1978classical}, and in three dimensions by   Pfaffelmoser \cite{pfaffelmoser1992global} (see also \cite{bardos1985global, Schaffer}). The seminal work of Lions and Perthame \cite{lions1991propagation} further studies the propagation of velocity moments, from which global existence of smooth classical solutions can also be deduced.
For the periodic case, i.e., where the space domain is the flat torus and $K$ is given by means of the periodic Green kernel, global well-posedness was proved by Batt and Rein \cite{batt1991global}, with further developments by Pallard in \cite{pallard2012moment} and by Chen and Chen in \cite{ChenChen2019}. 
The weak global existence of the screened Vlasov-Poisson equation is discussed by Han-Kwan in \cite[Theorem 2.1]{HKD}. The proof of this result is an adaptation of Arsenev \cite{arsenev1975existence}. For a more detailed discussion about this system, the interested reader can look into \cite[Section 1.1 and 1.2]{HKD_HDR}. We also refer to \cite{golse2016dynamics} and \cite{griffin2020recent} for a more detailed account on the state of the art of well-posedness results concerning Vlasov type equations.


Questions of stability and uniqueness are also vast: the epitome of all kinetic stability estimates is the work of Dobrushin \cite{dobrushin1979vlasov}, which proves stability for measure valued solutions and thus is closely linked to the problem of deriving the Vlasov equation as a mean field limit -- see \cite{jabin2014review} for a detailed account on the latter theme. However, Dobrushin's approach is limited to interactions $K$ which are at least $C^{1,1}$, and is thus far from being applicable to Coulombic singularities. For the full space Coulomb case, Loeper \cite{loeper2006uniqueness} proved uniqueness of solutions of \eqref{nonMAGNETIC Vlasov Intro} via a stability estimate with respect to the 2-Wasserstein distance. Later, Holding and Miot extended Loeper’s uniqueness criterion for the Vlasov–Poisson system to solutions whose associated density belongs to suitable Orlicz spaces \cite{Miot_2016, Holding_Miot} (see also \cite{Crippa_co}); while Han-Kwan and Iacobelli adapted Loeper's argument to obtain a stability estimate on the torus in the context of the study of quasineutral limits \cite{han2017quasineutral}. 

The inclusion of a magnetic field introduces new difficulties for what concerns the propagation of velocity moments and  the  stability estimates. The case of a uniform magnetic field, constant in both time and space, was first studied by Rege \cite{rege2021uniformB}, who established propagation of velocity moments for the magnetized Vlasov--Poisson equation in the 3D whole space case. In a subsequent work \cite{rege2023propagation}, the same author considered the case of a spatially uniform but time-dependent magnetic field, $B(t)$.
In \cite{rege2025stability}, he proved stability estimates \`a la Loeper for the magnetized Vlasov--Poisson equation on the torus with non-uniform magnetic field. However, the more general question of proving global propagation of velocity moments for the Vlasov--Poisson equation with non-uniform and non-constant magnetic field, remained open.

Beyond moment propagation, other directions of research have focused on spectral and numerical aspects of the magnetized Vlasov--Poisson equation. In particular, Charles, Després, Rege, and Weder analysed the Bernstein-Landau paradox and related spectral phenomena in \cite{Charles_co_Landau}, while several efficient numerical schemes for the magnetized Vlasov--Poisson system have been proposed in \cite{Charles_co_numeric, Fibet_Rodrigues_2016, Filbet_co_2021, Fibet_Rodrigues_2023, Filbet_co_2025}.

Further related developments address asymptotic and confining effects induced by strong external magnetic fields. Caprino, Cavallaro, and Marchioro studied the dynamics of plasmas confined in an infinite cylinder by an unbounded magnetic field in \cite{Caprino_2012}, while Knopf and Weber established global well-posedness and nonlinear stability of confined steady states for the two-and-one-half dimensional Vlasov--Poisson system in \cite{Knopf_Weber_2022}. The asymptotic behaviour of the Vlasov--Poisson system under strong magnetic fields has also been extensively investigated by Golse and Saint-Raymond, notably in the quasineutral regime and in the derivation of guiding-center or gyrokinetic models \cite{GSR98,GSR99,GSR2003}. Frénod and Sonnendrücker analysed the homogenization of the Vlasov--Poisson system in such regimes in \cite{Frenod_Sonnendrucker_98}. They also derived in \cite{Frenod_Sonnendrucker_2001} the finite Larmor radius approximation, describing the asymptotic behaviour of charged particles under strong external magnetic fields (see also the works of Han-Kwan on that topic \cite{Han_Kwan_2010, Han_Kwan_2012, Han_Kwan_2013}). In the same direction, effective models governing the finite Larmor radius regime and the quasineutral limit for strongly magnetized plasmas were obtained by Bostan, Finot, and Hauray in \cite{Bostan_Finot, Bostan_2016, Bostan_2020}. In \cite{Degond_Filbet_2016}, Degond and Filbet established the long-time asymptotic limit of the three-dimensional Vlasov--Poisson equation in the strong-field regime, including the guiding-center approximation in the case of non-uniform magnetic fields. We also mention the result of Herda \cite{Herda_2016}, who studied related asymptotic regimes for multispecies systems in the massless-electron limit and in the presence of external magnetic fields.

In the present work, we investigate the propagation of moments, propagation of regularity and Wasserstein stability in the magnetized setting of equation \eqref{MAGNETIC Vlasov Intro}.

Given a solution $f$ of \eqref{MAGNETIC Vlasov Intro} or \eqref{nonMAGNETIC Vlasov Intro} we define the $n$-th velocity moments as
\begin{align}
M_{n}(t)\coloneqq \int_{\mathbb{R}^{2d}}\left\vert  v \right\vert^{n}f(t,x, v) \ dxd v  \label{non magnetic velocity moments intro}  \end{align}
and the $n$-th position moments as 
\begin{align}
N_{n}(t)\coloneqq \int_{\mathbb{R}^{2d}} \left\vert x\right\vert^{n}f(t,x, v)\ dxdv. \label{position moment} 
\end{align}
When we say that $M_{n}(t)$ \textit{is propagated in time} we mean  that there exists a function $\Phi_{n}(t)=\Phi_{n}\left(\left\Vert f^{\mathrm{in}}\right\Vert_{\infty},M_{n}(0) ,t\right)$ continuous in $t$ such that
\begin{align}
M_{n}(t)\leq \Phi_{n}(t) \text{ for all } t\in[0,T]. \label{def of propagation}    
\end{align}
The propagation of velocity moments is a central ingredient in proving existence of classical solutions, and goes back to the work of Lions and Perthame \cite{lions1991propagation}. Indeed, once we have uniform bounds on the velocity moments $M_{n}$ for all times, we can show that the spatial density $\rho_f$ also remains bounded. Controlling the density then provides bounds on the force field, $\nabla_x K \star \rho_f$, ensuring that it remains sufficiently regular. When both $f$ and $\nabla_{x} K\star\rho_{f}$ are controlled in suitable norms, the characteristics of the Vlasov equation stay regular for all time, and consequently leads to the global existence of classical solutions. Extensions and improvements of the work of Lions and Perthame \cite{lions1991propagation} have been investigated by Pallard in \cite{pallard2012moment}. A self contained exposition to propagation of velocity moments for potentials with even stronger singularities than Coulomb (local or global) can be found in Lafleche \cite{lafleche2019propagation,lafleche2021global}, which also studies the quantum analogue of this problem and the semi-classical limit. 

As already mentioned, the aim of the current work is to address propagation of moments for the 2D and 3D full space case of the system \eqref{MAGNETIC Vlasov Intro}. Note that in the problem we are considering the magnetic field is external. Propagation of moments for the Vlasov--Maxwell equation, which corresponds to the case where the magnetic field is self-consistent, remains an outstanding open problem in kinetic theory. Nonetheless, several important contributions have significantly advanced our understanding of the Vlasov--Maxwell equation. These include the classical works of Glassey and Strauss in \cite{Glassey_Strauss}, Klainerman and Staffilani in \cite{KS02}, and Bouchut, Golse and Pallard in \cite{Bouchut_Golse_Pallard_2003}, as well as more recent developments \cite{LS14, LS16, Preissl_co_2021}.

In order to state our main result, we need to introduce hypotheses on the external magnetic field $B(t,x)$. Let
\begin{align*}
\mathbf{b}(t,x)\coloneqq x\wedge B(t,x) . 
\end{align*}
Given $c\in (\frac{7}{5},\frac{3}{2})$ let $a=\frac{3}{c}-1$. Suppose that there is some constant $B_{0}>0$ such that $\mathbf{b}(t,x)$ and $B(t,x)$ satisfy:
\begin{equation}\tag{\textbf{B}}\label{eq:B}
\begin{aligned}
&\|\mathbf{b}(t,\cdot)\|_{\infty}\le B_{0}t^{-a}, \quad \text{and} \quad \|B(t,\cdot)\|_{\infty}\le B_{0}t^{-a}. 
\end{aligned}
\end{equation}
Clearly any smooth compactly supported function will satisfy \eqref{eq:B}. We can now state our first main theorem:

\begin{theorem}
Assume that $K$ is the kernel as in \eqref{Coulomb def}. Let $0\leq f^{\mathrm{in}}\in L^{1}(\mathbb{R}^{d}\times\mathbb{R}^{d})\cap L^{\infty}(\mathbb{R}^{d}\times\mathbb{R}^{d})$ and suppose there exist $\mathbf{M}^{\mathrm{in}}_{k}>0$ for $k=0,\dots,n$ such that
\[
M_{k}(0)\le \mathbf{M}^{\mathrm{in}}_{k}\qquad\text{for all }0\le k\le n.
\]
Let $f$ be a classical solution to \eqref{MAGNETIC Vlasov Intro} on $[0,T]\times\mathbb{R}^{d}$ with initial data $f^{\mathrm{in}}$.
Then,
\begin{itemize}
    \item [i)]  if $d=2$, there is some $\Phi_{n}(t)=\Phi_{n}\left(\left\Vert f^{\mathrm{in}}\right\Vert_{L^\infty\cap L^{1}},\mathbf{M}^{\mathrm{in}}_{n},t\right)$,  continuous in $t$, such that it holds that 
\begin{align*}
M_{n}(t)\leq \Phi_{n}(t) \ \mbox{for all}\ t\in [0,T].    
\end{align*}
\item [ii)] if $d=3$, assume in addition that $n\ge 4$, that hypothesis \eqref{eq:B} holds, and that there exist constants $\mathbf{N}^{\mathrm{in}}_{k}>0$ with
\[
N_{k}(0)\le \mathbf{N}^{\mathrm{in}}_{k}\qquad\text{for all }0\le k\le n.
\]
Then, there exists some constant $\varepsilon_{0}=\varepsilon_{0}(B_{0})>0$ such that if 
\begin{align}
\left\Vert f^{\mathrm{in}}\right\Vert_{L
^{\infty}\cap L^{1}}+M_{4}(0)+N_{4}(0)\leq \varepsilon_{0} \label{smallness assumption}   
\end{align}
then there is some $\Phi_{n}(t)=\Phi_{n}\left(\left\Vert f^{\mathrm{in}}\right\Vert_{L^{\infty}\cap L^{1}},\mathbf{M}^{\mathrm{in}}_{n},\mathbf{N}^{\mathrm{in}}_{n},t\right)$, continuous in $t$, such that 
\begin{align*}
M_{n}(t)\le \Phi_{n}(t)\qquad\text{and}\qquad N_{n}(t)\le \Phi_{n}(t)\qquad\text{for all }t\in[0,T].   
\end{align*}
\end{itemize}
\label{main thm intro}
\end{theorem}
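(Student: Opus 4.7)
The overall plan is to run the Lions--Perthame moment iteration, leveraging the elementary but essential observation that at the Eulerian level the magnetic force is invisible to the moments. Multiplying \eqref{MAGNETIC Vlasov Intro} by $|v|^n$ (respectively $|x|^n$) and integrating by parts, one obtains
\[
\frac{d}{dt}M_n(t)=-n\int_{\R^{2d}}|v|^{n-2}\,v\cdot(\nabla_x K\star\rho_f)(t,x)\,f\,dxdv,\qquad\frac{d}{dt}N_n(t)=n\int_{\R^{2d}}|x|^{n-2}(x\cdot v)\,f\,dxdv,
\]
in which the magnetic contribution disappears: in the first identity because $v\cdot(v\wedge B)=0$ pointwise, and in the second because $|x|^n$ is independent of $v$. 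The field $B$ therefore enters the analysis only through the bounds on $\rho_f$ and on $\nabla_x K\star\rho_f$, i.e.\ through the transport structure.

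For part (i), conservation of the total energy---preserved by the magnetic force---controls $M_2$ in the Coulomb case. I would then iterate on $n$ by combining the pointwise interpolation
\[
\rho_f(t,x)\lesssim\|f^{\mathrm{in}}\|_{L^\infty}^{n/(n+2)}\Bigl(\int_{\R^2}|v|^n f(t,x,v)\,dv\Bigr)^{2/(n+2)}
\]
with the 2D Riesz-type bound $\|\nabla_x K\star\rho_f\|_{L^\infty}\lesssim\|\rho_f\|_{L^1}^{1/2}\|\rho_f\|_{L^\infty}^{1/2}(1+\log_+\|\rho_f\|_{L^\infty})$, inserting the result into the ODE for $M_n$ and applying H\"older in $v$. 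The resulting Gr\"onwall-type inequality closes without any assumption on $B$.

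For part (ii), in the three-dimensional screened case I would work with the coupled system $(M_n,N_n)$. The exponential decay of $K$ places $\nabla_x K\in L^c(\R^3)$ for every $c\in[1,3/2)$, so a Young-type bound gives $\|\nabla_x K\star\rho_f\|_{L^\infty}\lesssim\|\rho_f\|_{L^{c'}}$ with $c'=c/(c-1)>3$. The $L^{c'}$-norm of $\rho_f$ is controlled through a Lions--Perthame/Pallard-type interpolation along the characteristics, consuming only $\|f^{\mathrm{in}}\|_{L^1\cap L^\infty}$ and the velocity moments of order $\le n$; inserting this in $\frac{d}{dt}M_n$ and applying H\"older closes the ODE for $M_n$. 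For the position moment, H\"older gives $\dot N_n\le nN_n^{(n-1)/n}M_n^{1/n}$, which propagates $N_n$ as soon as $M_n$ is controlled. Hypothesis \eqref{eq:B} enters the characteristic-based estimate of $\rho_f$: while $|V|$ is preserved by $V\wedge B$, the direction of $V$, and hence the spatial trajectory $X(t)$, is rotated, which perturbs the change of variables $v\mapsto X(t;x_0,v)$ at the heart of the dispersive bound. The associated error terms involve weighted integrals of $(v\wedge B)$-type currents that one can bound by $\|\mathbf{b}(t,\cdot)\|_{L^\infty}$, and the time-integrable decay $t^{-a}$ with $a>1$ ensures that the accumulated magnetic perturbation stays finite on $[0,T]$. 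The smallness assumption \eqref{smallness assumption} is then used to keep the coupled Gr\"onwall loop between $M_n$ and $N_n$ within tame constants.

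The main obstacle is therefore the coupled closure in 3D with a \emph{spatially non-uniform} magnetic field: unlike in Rege's spatially homogeneous setting, no rotating frame removes $B$ from the characteristics, and the Lions--Perthame density estimate must be redone in the presence of a genuine magnetic deflection. Controlling this deflection by $\|\mathbf{b}\|_{L^\infty}$ rather than $\|B\|_{L^\infty}$---the bare $L^\infty$ bound on $B$ would not suffice, since $B$ need not decay in $|x|$---and balancing it against the smallness of the initial data is the technical heart of the argument.
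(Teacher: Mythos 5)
Your plan for part (i) is essentially the paper's: compute $\dot M_n$, observe that the $v\wedge B$ term integrates to zero, and iterate via kinetic interpolation inequalities and weak Young. (A minor discrepancy: the paper's base case is $n=1$, not $n=2$ via energy; in 2D Coulomb the sign-indefinite logarithmic potential energy does not on its own control $M_2$, so the paper avoids invoking energy conservation altogether.)

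For part (ii) there is a genuine gap. You propose to recover the density bound ``through a Lions--Perthame/Pallard-type interpolation along the characteristics'' and then to control how the magnetic deflection ``perturbs the change of variables $v\mapsto X(t;x_0,v)$.'' But this is exactly the route the paper explains it \emph{cannot} take: when $B=B(t,x)$ is genuinely space-time dependent, the free (potential-less) characteristic system is no longer explicitly solvable, so there is no analogue of the key representation formula \eqref{rep formula}, and the Lagrangian density estimate cannot be run. The paper's workaround is different in kind: it follows Lafleche's Eulerian strategy and propagates the moments
\begin{equation*}
L_n(t)=\int_{\R^{6}}\abs{x-tv}^n f(t,x,v)\,dx\,dv,
\end{equation*}
a quantity whose time derivative can be estimated using only the kinetic interpolation inequality together with a change of variables $v\mapsto v+x/t$, never invoking the trajectories. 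Crucially, this also falsifies your remark that ``$B$ enters the analysis only through the bounds on $\rho_f$ and on $\nabla_x K\star\rho_f$'': unlike $\dot M_n$ and $\dot N_n$, the derivative $\dot L_n$ does pick up two nontrivial magnetic contributions, roughly $\norm{B(t,\cdot)}_\infty L_n(t)$ and $\norm{\mathbf{b}(t,\cdot)}_\infty L_n^{(n-1)/n}(t)$. It is precisely these terms that require both $\norm{B(t,\cdot)}_\infty$ and $\norm{\mathbf{b}(t,\cdot)}_\infty$ to decay like $t^{-a}$ with $a>1$, and it is precisely the resulting super-linear (Riccati-type) ODE for $1+L_n$ that requires the smallness condition \eqref{smallness assumption}. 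Without the Eulerian moments $L_n$ your sketch supplies no mechanism by which the decay of $B$ and the smallness of the initial data actually enter a closed differential inequality, and your simple bound $\dot N_n\le n N_n^{(n-1)/n}M_n^{1/n}$ only propagates $N_n$ assuming $M_n$ is already known globally, which in 3D is the whole point; the paper has to route the long-time $N_n$ estimate through $L_n$ via the decomposition $x\cdot v=\tfrac1t\bigl(x\cdot(tv-x)+\abs{x}^2\bigr)$ to avoid the Riccati blowup.
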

One aspect in which our approach differs from the work of Rege \cite{rege2023propagation} lies in the fact that we do not appeal to the trajectories in the proof, but instead rely only on functional inequalities, namely on kinetic interpolation inequalities (see Lemma \ref{Magnetic-kinetic-interpolation}). To better understand why this approach is helpful, we recall that a crucial ingredient in the proof of Lions-Perthame is the following representation formula for the density: 
\begin{align}
\rho_{f}(t,x)=\mathrm{div}_{x}\int_{0}^{t}s\int_{\mathbb{R}^{d}}\nabla_{x} K\star \rho_{f}(t-s,x-sv)f(t,x-sv,v)\  dvds+\int_{\mathbb{R}^{d}}f^{\mathrm{in}}(x-tv,v)\ dv. \label{rep formula}   \end{align}
This formula is obtained by solving explicitly the system of characteristics corresponding to the potential- free Vlasov equation, i.e., solving the system 
\begin{align}
\begin{cases}
\begin{array}{cc}
\dot X (t,x,v)=V(t,x,v), & X(0,x,v)=x,\\
\dot V(t,x,v)=0, & V(0,x,v)=v. \label{trajectories of Vlasov free}
\end{array}\end{cases}    
\end{align}
The solution of \eqref{trajectories of Vlasov free} is given by $V(t,x,v)=v$ and $X(t,x,v)=x+tv$. Combined with Duhamel's formula, it is then straightforward to obtain \eqref{rep formula}. In the case where the magnetic field is uniform, the corresponding equation for the characteristics is still explicitly solvable, and the argument used by Rege in \cite{rege2021uniformB} relies on this explicit solution.  However, in the case of general magnetic fields it is no longer the case that the underlying system of trajectories for the potential-free Vlasov equation is explicitly solvable and therefore it is not clear whether this approach can be  carried out. So instead we follow the route taken by Lafleche in \cite{lafleche2021global} which is based on propagating the Eulerian moments, denoted $L_{n}(t)$, and defined by 
\begin{align*}
L_{n}(t)\coloneqq \int_{\mathbb{R}^{2d} }\left\vert x-t v\right\vert^{n}  f(t,x, v)\ dxdv. \end{align*}
In order to properly propagate $L_{n}(t)$ globally in time, we need both the decay assumption \eqref{eq:B}, the smallness assumption \eqref{smallness assumption} and the fast decay at infinity of the potential $K$. Note that the condition \eqref{smallness assumption} excludes the possibility of taking the initial data to be a probability density. This also explains why there is a substantial difference between the 2D and the 3D case. In 2D, the argument leading to global propagation of moments does not necessitate studying the evolution of $L_{n}(t)$, but only of $M_{n}(t)$. Due to proper cancellations, the magnetic field $B$ does not contribute any terms to the time derivative of $M_{n}(t)$. However, in 3D, this argument would yield only short time propagation of $M_{n}(t)$. Long time estimates require an evolution inequality for $L_{n}(t)$. But the time derivative of $L_{n}(t)$, unlike that of $M_{n}(t)$, includes terms contributed by $B$. To put it briefly, for what concerns the moments, in 2D the magnetic field is invisible, whereas it is visible in the 3D case, further complicating the analysis. Once propagation of moments is established, we can deduce propagation of regularity, leading to global existence of smooth solutions. This is the content of our second main theorem.
\begin{theorem}
Let the assumptions of Theorem \ref{main thm intro} hold and assume that $n>d( b '-1)$ where $\frac{1}{b}+\frac{1}{b'}=1$ and
\begin{align}
 b \coloneqq\begin{cases} 2,\ d=2,\\
\frac{3}{2},\ d=3.
\end{cases}
\label{definition of frakb}
\end{align}
Let $\phi_{0}$ be the constant from Theorem \ref{thm_bound_rho}, $\lambda(t)>0$ be a function such that $\lambda(t)(1+\left\Vert \nabla_x K\right\Vert_{ b ,\infty} \phi_{0})+\dot\lambda(t)\leq 0$ and let $w(x,v)\coloneqq 1+\left\vert x\right\vert^{2}+ \left\vert  v\right\vert^
{2}$. Suppose that $\nabla_{x, v}f^{\mathrm{in}}\in L^{p}(e^{w(x,v)})$ for some $p\in [2,\infty)$ and let $f$ be a classical solution of \eqref{MAGNETIC Vlasov Intro} on $[0,T]\times \mathbb{R}^{d}$ with initial data $f^{\mathrm{in}}$. Then, there is some $\overline{S}>0$ such that it holds that 
\begin{align*}
\left\Vert \nabla_{x}f(t,\cdot)\right\Vert_{L^{p}(e^{\lambda(t)w(x, v)})}^{p}+\left\Vert \nabla_{ v}f(t,\cdot)\right\Vert_{L^{p}(e^{\lambda(t)w(x,v)})}^{p}\leq \overline{S}\left(1+\left\Vert \nabla_{x}f^{\mathrm{in}}\right\Vert_{L^{p}(e^{w(x,v)})}^{
p}+\left\Vert \nabla_{ v}f^{\mathrm{in}}\right\Vert_{L^{p}(e^{w(x,v)})}^{p}  \right)^{e^{\overline{S}t}} \  \end{align*}
 for all $t\in [0,T]$. 
 \label{propagation of regularity intro}
\end{theorem}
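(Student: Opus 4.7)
The plan is to derive an Osgood-type differential inequality for
\[
Y_p(t):=\int_{\R^{2d}}\bigl(|\nabla_x f|^p+|\nabla_v f|^p\bigr)e^{\lambda(t)w(x,v)}\,dxdv,
\]
whose integration produces the claimed double-exponential bound. Writing $F(t,x,v):=-\nabla_x K\star\rho_f-v\wedge B$ and $T:=v\cdot\nabla_x+F\cdot\nabla_v$, differentiating \eqref{MAGNETIC Vlasov Intro} in $x_i$ and $v_i$ yields
\begin{align*}
(\partial_t+T)(\partial_{x_i}f)&=-(\partial_{x_i}F)\cdot\nabla_v f,\\
(\partial_t+T)(\partial_{v_i}f)&=-\partial_{x_i}f-(\partial_{v_i}F)\cdot\nabla_v f.
\end{align*}
Multiplying the first by $|\nabla_x f|^{p-2}\partial_{x_i}f\,e^{\lambda w}$ and the second by $|\nabla_v f|^{p-2}\partial_{v_i}f\,e^{\lambda w}$, summing over $i$, integrating over $\R^{2d}$ and integrating by parts — which is valid because $v\mapsto v\wedge B$ is divergence-free, so $\divergence_v F=0$ and the phase-space transport is incompressible — the only surviving weight contributions come from $(\partial_t+T)(e^{\lambda w})$. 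This produces the identity
\begin{equation*}
\tfrac{1}{p}\tfrac{d}{dt}\!\int\!|\nabla_x f|^p e^{\lambda w}=\tfrac{1}{p}\!\int\!|\nabla_x f|^p\bigl(\dot\lambda w+2\lambda(v\cdot x+F\cdot v)\bigr)e^{\lambda w}-\!\int\!|\nabla_x f|^{p-2}\nabla_x f\cdot(\nabla_x F)\nabla_v f\,e^{\lambda w},
\end{equation*}
together with an analogous identity for $\int|\nabla_v f|^p e^{\lambda w}$ that carries an additional cross term originating from $-\partial_{x_i}f$, easily absorbed by Young's inequality.

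The key algebraic observation is that $v\cdot(v\wedge B)=0$, so $F\cdot v=-E\cdot v$ and the magnetic field disappears from the drift bracket. Consequently $|v\cdot x+F\cdot v|\leq\tfrac{1}{2}(1+\|E\|_\infty)w$, and Young's inequality in Lorentz spaces combined with Theorem \ref{thm_bound_rho} gives $\|E\|_\infty\leq\|\nabla_x K\|_{b,\infty}\phi_0$. The sign hypothesis $\dot\lambda+\lambda(1+\|\nabla_x K\|_{b,\infty}\phi_0)\leq 0$ then renders the entire transport/weight bracket pointwise non-positive, and it may simply be dropped. Among the forcing terms, the contributions involving $B$ and $\nabla_x B$ — coming from $\nabla_x F=-\nabla_x E-v\wedge\nabla_x B$ and $(\partial_{v_i}F)=-e_i\wedge B$ — are controlled by Hölder's inequality using the $L^\infty$ bounds on $B$ and $\nabla_x B$ inherited from the hypotheses on the magnetic field; the extra $|v|$ factor produced by $v\wedge\nabla_x B$ is absorbed by the super-quadratic weight $e^{\lambda w}$, giving linear contributions of the form $C(t)Y_p(t)$.

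The main obstacle is the singular electric-field term $\int|\nabla_x f|^{p-1}|\nabla_x E||\nabla_v f|\,e^{\lambda w}$, since $\nabla_x E=\nabla_x^2 K\star\rho_f$ involves a Calderón--Zygmund kernel, so $\nabla_x E\notin L^\infty$ and a naive estimate does not close. To handle it the plan is to invoke the logarithmic Brezis--Gallou\"et--Wainger bound
\[
\|\nabla_x E\|_\infty\leq C\|\rho_f\|_\infty\bigl(1+\log_+\|\nabla_x\rho_f\|_{L^q}\bigr),\qquad q>d,
\]
where $\|\rho_f(t)\|_\infty\lesssim\|f\|_\infty^{n/(n+d)}M_n(t)^{d/(n+d)}$ is bounded on $[0,T]$ thanks to Theorem \ref{main thm intro} (together with the conservation of $\|f\|_\infty$), and $\|\nabla_x\rho_f\|_{L^q}\leq C\,Y_p(t)^{1/p}$ follows from Hölder's inequality in $v$ against the Gaussian-like weight $e^{\lambda w}$, using the condition $n>d(b'-1)$ to match integrability exponents. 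Applying Young's inequality $X^{p-1}Y\leq\tfrac{p-1}{p}X^p+\tfrac{1}{p}Y^p$ pointwise, the bad term is dominated by $C\bigl(1+\log(1+Y_p(t))\bigr)Y_p(t)$. Collecting all estimates produces the Osgood inequality
\[
\dot Y_p(t)\leq\bar S\bigl(1+\log(1+Y_p(t))\bigr)Y_p(t),
\]
which integrates to $Y_p(t)\leq\bar S(1+Y_p(0))^{\exp(\bar S t)}$ on $[0,T]$, precisely the claimed bound.
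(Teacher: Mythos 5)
Your scheme is, modulo bookkeeping, the paper's: compute $\frac{d}{dt}\int|\nabla_{x,v}f|^p e^{\lambda(t)w}$, exploit orthogonality $v\cdot(v\wedge B)=0$ and $\operatorname{div}_v(v\wedge B)=0$ so that the magnetic part contributes nothing to the weight bracket $\lambda\,T(w)$, annihilate the remaining positive $w$-weighted term via the sign hypothesis on $\lambda$, handle the Calder\'on--Zygmund singular term $\nabla_x^2K\star\rho_f$ with a log-Lipschitz estimate together with a kinetic interpolation bound on $\|\nabla_x\rho_f\|_{L^q}$, and close an Osgood inequality. The paper performs this via a term-by-term decomposition $T_1,\dots,T_6$ and $J_1,\dots,J_6$; your single-operator formulation $T=v\cdot\nabla_x+F\cdot\nabla_v$ is a compact way to organize the same cancellations.

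There is, however, a genuine gap in your treatment of the magnetic forcing term coming from $\nabla_x F=-\nabla_x E-v\wedge\nabla_x B$. The piece $\int|\nabla_x f|^{p-2}\nabla_x f\cdot(v\wedge\nabla_x B)\nabla_v f\,e^{\lambda w}$ is \emph{not} a bounded multiple of $Y_p(t)$, and a super-quadratic weight cannot make it so. After Young's inequality it controls as
\begin{equation*}
\|\nabla_x B\|_\infty\left(\frac{p-1}{p}\int|v|^{\frac{p}{p-1}}|\nabla_x f|^p e^{\lambda w}+\frac{1}{p}\int|\nabla_v f|^p e^{\lambda w}\right),
\end{equation*}
and the only available bound is $|v|^{p/(p-1)}\le w$ (valid since $p\ge 2$), which sends the first integral into the class $\int w\,|\nabla_x f|^p e^{\lambda w}$. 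That term cannot be estimated by $Y_p$; it must be absorbed by the nonpositive coefficient generated by $\dot\lambda$. That is exactly what the paper does, and doing so forces a \emph{strengthened} sign condition
\begin{equation*}
\dot\lambda(t)+\lambda(t)\bigl(1+(p-1)\|\nabla_x B\|_\infty+\|\nabla_x K\|_{b,\infty}\phi_0\bigr)\le 0,
\end{equation*}
which includes a $\|\nabla_x B\|_\infty$ contribution absent from the hypothesis you (and the theorem statement itself) carry. So your step ``absorbed by the super-quadratic weight, giving linear contributions $C(t)Y_p(t)$'' does not hold as written; you must instead feed this term into the $\dot\lambda$ cancellation and accordingly strengthen the assumption on $\lambda$ to include $\|\nabla_x B\|_\infty$. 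Everything else in your outline --- the $\|E\|_\infty\le\|\nabla_x K\|_{b,\infty}\phi_0$ bound, the $B$-dependent term in the $\nabla_v f$ evolution being genuinely linear in $Y_p$ (since $\partial_{v_i}F=-e_i\wedge B$ carries no $|v|$), the log-Lipschitz/BGW estimate, the interpolation $\|\nabla_x\rho_f\|_{L^q}\lesssim Y_p^{1/p}$, and the Osgood integration --- matches the paper's argument.
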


Previous $W_1$-stability results for the Vlasov-Poisson equation, such as those of Holding and Miot \cite{Miot_2016, Holding_Miot} and Crippa et. al \cite{Crippa_co}, rely essentially on the second-order structure of the system. In contrast, Loeper's classical $W_2$-approach yields a weaker control, with solutions that are initially $\delta$-close remaining close only up to times of order $\log \abs{\log \delta}$. The rate of order $\sqrt{\abs{\log \delta}}$ obtained in $W_1$ for bounded macroscopic densities is regarded as \emph{optimal}. 

This rigidity was overcome by Iacobelli \cite{iacobelli_new_2022}, who introduced the \emph{kinetic Wasserstein distance}, a nonlinear modification of Loeper's functional that captures the anisotropy between position and velocity through the introduction of a time-dependent weight depending itself on the functional; $\pi_0$ is a $W_2$-optimal coupling between the initial solutions of \eqref{nonMAGNETIC Vlasov Intro} and $\lambda(t) \coloneqq \sqrt{\abs{\log D_2(t)}}$ for the functional
\begin{equation*}
    D_2(t) = \int_{(\XRd)^2} \lambda(t)\abs{X_1(t, x, v) - X_2(t, y, w)}^2 + \abs{V_1(t, x, v) - V_2(t, y, w)}^2 \: d\pi_0(x, v, y, w).
\end{equation*}
This nonlinear coupling between position and velocity allows the derivation of sharp stability estimates solely based on first-order structure, and recovers the optimal rate $\sqrt{\abs{\log \delta}}$ in the stronger $W_2$-distance, thereby improving substantially the quasi-neutral limit \cite{iacobelli_new_2022}. In the quantum setting, together with Lafleche \cite{iacobelli2024enhanced}, they enhanced the convergence from the Hartree to the Vlasov--Poisson equation in the semi-classical analog of the 2-Wasserstein distance introduced by Golse, Mouhot and Paul \cite{golse2017schrodinger}. The kinetic Wasserstein framework was later also applied to the magnetized Vlasov--Poisson equation by Rege \cite{rege2025stability}, and with Junn\'e \cite{junne_stability_2025}, extended to Yudovich macroscopic densities -- the largest class for which uniqueness was obtained by the second-order approach of \cite{Crippa_co}.

However, the $W_2$-stability estimates obtained so far in the magnetized case do not reach this optimal order. The key observation underlying our improvement is that the external magnetic field does not produce work, which allows control of the velocity magnitude independently of $B(t, x)$ while its direction is still influenced by the presence of that field through 
\begin{align}
\begin{cases}
\begin{array}{cc}
\dot X (t,x,v)= V(t,x,v), & X(0,x,v)=x,\\
\dot V(t,x,v)= -\nabla_x K \star \rho_{f}(t, X(t,x,v)) - V(t,x,v) \wedge B(t, x), & V(0,x,v)=v. \label{trajectories of Vlasov mag}
\end{array}\end{cases}    
\end{align}
By incorporating this no-work property into the kinetic Wasserstein distance framework, we refine the existing estimates and recover the optimal $\sqrt{\abs{\log \delta}}$ rate for the magnetized Vlasov--Poisson equation. This constitutes our third theorem.

\begin{theorem}\label{thm:Iacobelli's W_p}
    Let $f_1, f_2 \ge 0$ be two weak solutions to \eqref{MAGNETIC Vlasov Intro} on $[0, T)$ with respective densities $\rho_{f_i} \coloneqq \int f_i \: dv$, $i = 1,2,$ of common mass $1$, and let
    \begin{align*}
        K(x)\coloneqq \begin{cases}
\begin{array}{cc}
\pm \frac{1}{2\pi}\log(x) & d=2,\\
\pm \frac{e^{-\kappa\left\vert x\right\vert }}{4\pi\left\vert x\right\vert} & d=3. 
\end{array}\ \ \ \  ,\ \kappa\ge0.\end{cases}
    \end{align*}
    Let $1 \le p < +\infty$, and set
    \begin{equation*}\label{eq:A Iacobelli}
        A(t) \coloneqq \widetilde A(t) + \int_0^t \widetilde A(\tau) \: d\tau, \quad \widetilde A(t) \coloneqq \max\left\{1, \left[\norm{\rho_{f_2}} + \norm{\rho_{f_1}}^{\frac{1}{p}} \max\left\{\norm{\rho_{f_1}}, \norm{\rho_{f_2}}\right\}^{\frac{1}{p'}} \right]_{\infty}(t)\right\},
    \end{equation*}
    which is assumed to be in $L^1([0, T))$. Assume that $B(t, \cdot)$ is uniformly Lipschitz on $[0, T)$ and that initially one of the two solutions satisfies $M_k(0) \le (C_0 k)^k$ for all $k > 0$ and some constant $C_0 > 0$. Then there is a universal constant $c_0 > 0$ and a constant $C_{p,B} > 0$ that depends only on $p, C_0$ and $\norm{\nabla_x B}_{\infty}$ such that if $W_p^p(f_1^{\mathrm{in}}, f_2^{\mathrm{in}})$ is sufficiently small so that $W_p^p(f_1^{\mathrm{in}}, f_2^{\mathrm{in}}) \le c_0$ and
    \begin{equation*}
        \sqrt{\abs{\log \left(W_p^p\left(f_1^{\mathrm{in}}, f_2^{\mathrm{in}}\right) \sqrt{\abs{\log W_p^p\left(f_1^{\mathrm{in}}, f_2^{\mathrm{in}}\right)}}^p\right)}} \ge C_{p,B}\int_0^t A(\tau) \: d\tau + 1,  
    \end{equation*}
    then
    \begin{equation*}
            W_p^p\left(f_1(t), f_2(t)\right) \le \exp\left\{-\left(\sqrt{\abs{\log\Bigg\{W_p^p\left(f_1^{\mathrm{in}}, f_2^{\mathrm{in}}\right)\sqrt{\abs{\log W_p^p\left(f_1^{\mathrm{in}}, f_2^{\mathrm{in}}\right)}}^p\Bigg\}}} - C_{p,B}\int_0^t A(\tau) \: d\tau\right)^2\right\}.
    \end{equation*}
\end{theorem}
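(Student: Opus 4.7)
Let $\pi_{0}$ be a $W_{p}$-optimal coupling between $f_{1}^{\mathrm{in}}$ and $f_{2}^{\mathrm{in}}$, and let $(X_{i}, V_{i})(t, x, v)$ denote the characteristic flow of $f_{i}$ solving \eqref{trajectories of Vlasov mag}. Following the strategy of \cite{iacobelli_new_2022}, I would introduce the kinetic Wasserstein-type functional
\begin{equation*}
    D_{p}(t) \coloneqq \int_{(\XRd)^{2}} \Bigl(\lambda(t)^{p}\abs{X_{1}(t) - X_{2}(t)}^{p} + \abs{V_{1}(t) - V_{2}(t)}^{p}\Bigr) d\pi_{0},
\end{equation*}
together with the implicit weight $\lambda(t) = \sqrt{\abs{\log(D_{p}(t))}}$ defined self-consistently as long as $D_{p}(t)$ is small. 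The first step is to verify that $D_{p}(t) \ge W_{p}^{p}(f_{1}(t), f_{2}(t))$ (via the pushforward coupling) so that an upper bound on $D_{p}$ transfers to $W_{p}$.

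Next I would differentiate $D_{p}$ in time. Three contributions appear. The \emph{transport} term $p\lambda^{p}\int \abs{X_{1}-X_{2}}^{p-2}(X_{1}-X_{2})\cdot(V_{1}-V_{2})\,d\pi_{0}$ is handled by Young's inequality and yields a term of the form $C\lambda\, D_{p}$. The \emph{force} term
\begin{equation*}
    -p\int \abs{V_{1}-V_{2}}^{p-2}(V_{1}-V_{2})\cdot\bigl(\nabla_{x}K\star\rho_{f_{1}}(X_{1}) - \nabla_{x}K\star\rho_{f_{2}}(X_{2})\bigr)\,d\pi_{0}
\end{equation*}
is treated with the Loeper-type $L^{p}$ stability inequality for the Poisson (or Yukawa) kernel, producing a bound involving $\widetilde A(t)$ and a factor $\log(1/D_{p})$, which is precisely where the $A(t)$ in the statement arises and which motivates the choice $\lambda = \sqrt{\abs{\log D_{p}}}$.

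The new ingredient is the \emph{magnetic} term
\begin{equation*}
    -p\int \abs{V_{1}-V_{2}}^{p-2}(V_{1}-V_{2})\cdot\bigl(V_{1}\wedge B(t,X_{1}) - V_{2}\wedge B(t,X_{2))}\bigr)\,d\pi_{0}.
\end{equation*}
I split this as $(V_{1}-V_{2})\wedge B(t,X_{1}) + V_{2}\wedge(B(t,X_{1})-B(t,X_{2}))$. The first piece is antisymmetric in $V_{1}-V_{2}$, so it contributes only via the difference $B(t,X_{1})$ (rather than through cross-products with $V_{1}-V_{2}$ of order lower than $p$), and is bounded by $C\norm{B}_{\infty}\,D_{p}$. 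The second piece is bounded by $\norm{\nabla_{x}B}_{\infty}\abs{V_{2}}\,\abs{X_{1}-X_{2}}$, and by H\"older's inequality it produces a term involving $M_{q}(t)$ for the auxiliary solution $f_{2}$. Here the crucial structural observation is that the Lorentz force produces no work, i.e.\ $V_{i}\cdot(V_{i}\wedge B(t,X_{i})) = 0$, so that $\frac{d}{dt}\abs{V_{i}}^{2} = -2V_{i}\cdot\nabla_{x}K\star\rho_{f_{i}}(X_{i})$ is independent of $B$. Combined with the initial Gaussian-like moment hypothesis $M_{k}(0)\le (C_{0}k)^{k}$, this allows one to propagate the Maxwellian-type moment bounds $M_{k}(t)\le (C_{0}'(t)k)^{k}$ by a Gr\"onwall argument on each moment, with constants independent of $B$ (the $\norm{\nabla_{x}B}_{\infty}$ dependence entering only through an exponential factor in time). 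This bounds $\int \abs{V_{2}}^{p-1}\abs{X_{1}-X_{2}}\,d\pi_{0}$ by $\lambda^{-1}\cdot C(t)\,D_{p}$ after another application of Young's inequality, absorbing the magnetic contribution into the same framework.

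Combining all three contributions I expect an inequality of the form
\begin{equation*}
    \dot D_{p}(t) \le C_{p,B}\,\widetilde A(t)\bigl(1 + \sqrt{\abs{\log D_{p}(t)}}\bigr) D_{p}(t),
\end{equation*}
and then the standard ODE analysis (as in \cite{iacobelli_new_2022,junne_stability_2025}) integrates this to the stated double-exponential bound, recovering the optimal $\sqrt{\abs{\log \delta}}$ rate. \emph{The main obstacle} is controlling the term $V_{2}\wedge(B(t,X_{1})-B(t,X_{2}))$: naively this introduces a factor $\abs{V_{2}}$ that would spoil the logarithmic gain, and only the no-work identity together with the Gaussian moment assumption $M_{k}(0)\le(C_{0}k)^{k}$ (which quantifies how velocity moments are insensitive to $B$) keeps the rate optimal.
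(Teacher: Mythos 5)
Your high-level plan matches the paper's: the kinetic Wasserstein functional $D_p$ with self-consistent $\lambda=\sqrt{|\log D_p|}$, the split of the magnetic contribution into $(V_1-V_2)\wedge B(X_1)$ plus $V_2\wedge\bigl(B(X_1)-B(X_2)\bigr)$, the no-work identity, and the Gaussian-type moment hypothesis. However, there are two genuine gaps in the central magnetic estimate.

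First, the piece $\int |V_1-V_2|^{p-2}(V_1-V_2)\cdot\bigl[(V_1-V_2)\wedge B(X_1)\bigr]\,d\pi_0$ does not merely admit a bound $C\norm{B}_\infty D_p$: it vanishes \emph{identically}, because $a\cdot(a\wedge b)=0$. This matters. Your bound would introduce a dependence on $\norm{B}_\infty$, whereas the theorem's constant $C_{p,B}$ is allowed to depend only on $\norm{\nabla_x B}_\infty$; for a merely Lipschitz (and possibly unbounded) $B$, your estimate would not close. The exact cancellation is what removes $\norm{B}_\infty$ from the argument.

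Second, and more seriously, for the remaining term $\int|V_1-V_2|^{p-1}\,|V_2|\,|B(X_1)-B(X_2)|\,d\pi_0$ you invoke ``Hölder's inequality'' and ``another application of Young's inequality'' to land back in the framework, but this is where the real work is and your sketch does not explain how the logarithm survives. After extracting $D_p^{1/p'}$ by Hölder, one must estimate $\int |V_2|^p\,|X_1-X_2|^{\alpha p}\,d\pi_0$ with an interior Hölder pairing of exponents $1/\alpha$ and $(1/\alpha)'$: this costs a velocity moment of order $p(1/\alpha)'$, while the position factor can only be controlled through $\int|X_1-X_2|^p\,d\pi_0\le D_p/\lambda$. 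The argument closes only by choosing $\alpha=1-1/|\log(D_p/\lambda)|$, so that $(D_p/\lambda)^{\alpha/p}\sim\lambda^{-1/p}D_p^{1/p}$ \emph{and} the moment order is $\sim p|\log D_p|$, which the hypothesis $M_k\le(C_0k)^k$ (equivalently the weighted Yudovich norm condition with $\Theta(r)=r$) converts into a factor $\overline C_p\,C_0\,|\log D_p|$ -- precisely the right power to be absorbed by $\sqrt{|\log D_p|}$. Whether you bound $|V_2(t)|$ by $|V_2(0)|+\int_0^t|\nabla K\star\rho_{f_2}|$ directly (as the paper does) or propagate $M_k(t)\le((C_0+C(t))k)^k$ by Grönwall (your version, which is morally equivalent and would also work), you cannot avoid this $\alpha\to1$ Hölder trick tied to the Gaussian moment growth; your outline never mentions it, and without it the magnetic term loses a full factor of $|\log D_p|$ and the rate degrades.

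A smaller consistency issue: your expected differential inequality has $\widetilde A(t)$ in front, but the theorem's $A(t)=\widetilde A(t)+\int_0^t\widetilde A$. The additional $\int_0^t\widetilde A$ is exactly the contribution of the force-integral part of the no-work identity ($I_{4,2}$ in the paper), and it is not optional. You also do not discuss the sign of the $\dot\lambda$ term in $\dot D_p$, which the paper shows is $\le 0$ whenever $\dot D_p\ge0$; this is needed so that the implicit $\lambda$ does not spoil the differential inequality.
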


The paper is organized as follows. In \S{\ref{prop of moments sec}} we prove global propagation of velocity moments, first in 2D and then in 3D. The 3D case requires a more careful analysis due to the fact that it necessitates studying the propagation of the Eulerian moments as well in order to overcome a potential Riccati type blow up. In \S{\ref{sec prop of regularity}} we prove that propagation of regularity is implied from propagation of moments. Finally, in \S{\ref{sec_stability_estimate}}  we improve results about the stability of solutions using the kinetic Wasserstein distance.

\section{Propagation of moments}\label{prop of moments sec}
\subsection{Preliminaries} 
We start by presenting several general estimates which will be central to the forthcoming estimates. We follow closely \cite{lafleche2019propagation}. For any function $f:[0,T]\times \mathbb{R}^{d}\times\mathbb{R}^{d}\rightarrow \mathbb{R}_{+}$ let us introduce the velocity moments, Eulerian moments and space moments denoted respectively $M_{n,f},L_{n,f},N_{n,f}$ and defined 
by
\begin{align*}
&M_{n,f}(t)\coloneqq \int_{\mathbb{R}^{2d}}\left| v\right|^{n}f(t,x, v)\ dxd v, \\
&L_{n,f}(t)\coloneqq \int_{\mathbb{R}^{2d}}\left|tv-x\right|^{n}f(t,x, v)\ dxd v,\\
& N_{n,f}(t)\coloneqq \int_{\mathbb{R}^{2d}}\left\vert x\right\vert^{n}f(t,x, v)\ dxd v. 
\end{align*}
We also introduce the partial  velocity moments  denoted $\rho_{n,f}(t,x)$
and defined by
\begin{align*}
\rho_{n,f}(t,x)\coloneqq\int_{\mathbb{R}^{d}}\left|v\right|^{n}f(t,x,v)\ dv.  \end{align*}
Note that  $\rho_{f}=\rho_{0,f}$, where $\rho_{f}$ is the density. Whenever there is no ambiguity the subscript $f$ will be omitted. 
\begin{lemma}
\label{Magnetic-kinetic-interpolation} \textup{(Kinetic
interpolation inequality)}. Suppose that $d\geq 1$, that $0\leq k\leq n$
and put $p_{n,k} \coloneqq \frac{n+d}{k+d}, \theta_{n,k}\coloneqq\frac{1}{p'_{n,k}}$. 
Then, for any $f\in L^{\infty}(\mathbb{R}^{d}\times\mathbb{R}^{d})$ with $M_{n,f} < \infty$
it holds that 
\begin{align*}
\left\Vert \int_{\mathbb{R}^{d}}\left| v\right|^{k}f(\cdot, v)\ d v\right\Vert _{L^{p_{n,k}}_{x}}\leq C\left(\int_{\mathbb{R}^{2d}}\left| v\right|^{n}f(x, v)\ dxd v\right)^{1-\theta_{n,k}}\left\Vert f\right\Vert _{L_{x,v}^\infty}^{\theta_{n,k}},
\end{align*}
where $C=C_{d,k}=\frac{d+k+1}{d+k}$. In particular, 
\[
\left\Vert \rho_{f}\right\Vert _{L_x^{p_{n}}}\leq C\left(\int_{\mathbb{R}^{2d}}\left| v\right|^{n}f(x, v)\ dxd v\right)^{1-\theta_{n}}\left\Vert f\right\Vert _{L_{x,v}^\infty}^{\theta_{n}},
\]
where $p_{n}\coloneqq p_{n,0}=\frac{n+d}{d}$ and $\theta_{n}\coloneqq\frac{1}{p'_{n}}$. 
\end{lemma}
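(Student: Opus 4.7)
The approach is the classical truncation-optimization argument: I would first establish a pointwise bound in $x$ by splitting the $v$-integral into low and high velocities, and then integrate in $x$, exploiting the algebraic identity $p_{n,k}(1-\theta_{n,k})=1$ which is built into the definition of $p_{n,k}$.

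First, fix $x\in\mathbb{R}^d$ and for any $R>0$ split
\begin{equation*}
\int_{\mathbb{R}^d}|v|^{k}f(x,v)\,dv \;=\; \int_{|v|\le R}|v|^{k}f(x,v)\,dv \;+\; \int_{|v|>R}|v|^{k}f(x,v)\,dv.
\end{equation*}
The first piece is bounded crudely by $\|f\|_{L^{\infty}_{x,v}}$ times the pure volume integral $\int_{|v|\le R}|v|^{k}\,dv = \frac{|S^{d-1}|}{d+k}R^{d+k}$. For the second piece, since $k\le n$, I use the elementary bound $|v|^{k}=|v|^{k-n}|v|^{n}\le R^{k-n}|v|^{n}$ on $\{|v|>R\}$, so that piece is bounded by $R^{k-n}\rho_{n,f}(x)$, where $\rho_{n,f}(x)\coloneqq\int_{\mathbb{R}^d}|v|^{n}f(x,v)\,dv$.

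Next, I would optimize in $R$. Writing the upper bound as $AR^{d+k}+BR^{k-n}$ with $A=\frac{|S^{d-1}|}{d+k}\|f\|_{L^\infty_{x,v}}$ and $B=\rho_{n,f}(x)$, differentiation gives the optimal $R^{d+n}=\frac{(n-k)B}{(d+k)A}$, and plugging back in yields the pointwise interpolation
\begin{equation*}
\int_{\mathbb{R}^d}|v|^{k}f(x,v)\,dv \;\le\; C_{d,k,n}\,\|f\|_{L^\infty_{x,v}}^{\theta_{n,k}}\,\rho_{n,f}(x)^{\,1-\theta_{n,k}},
\end{equation*}
since the exponents $\frac{n-k}{n+d}$ and $\frac{d+k}{n+d}$ produced by the optimization are exactly $\theta_{n,k}$ and $1-\theta_{n,k}=1/p_{n,k}$.

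Finally, I raise this pointwise estimate to the $p_{n,k}$-th power; because $p_{n,k}(1-\theta_{n,k})=\frac{n+d}{k+d}\cdot\frac{k+d}{n+d}=1$, the factor $\rho_{n,f}(x)^{p_{n,k}(1-\theta_{n,k})}$ collapses to $\rho_{n,f}(x)$, and integrating in $x$ with Fubini gives
\begin{equation*}
\Big\|\int_{\mathbb{R}^d}|v|^{k}f(\cdot,v)\,dv\Big\|_{L^{p_{n,k}}_{x}}^{p_{n,k}} \;\le\; C_{d,k,n}^{\,p_{n,k}}\,\|f\|_{L^\infty_{x,v}}^{\,p_{n,k}\theta_{n,k}}\,M_{n,f},
\end{equation*}
and taking $p_{n,k}$-th roots produces the stated inequality. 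The ``in particular'' statement is the case $k=0$, noting $p_{n,0}=(n+d)/d=p_{n}$. The only delicate point is matching the precise universal constant $C_{d,k}=\frac{d+k+1}{d+k}$ claimed in the statement, which is independent of $n$; obtaining such a clean $n$-independent constant requires a sharper version of the truncation argument (for instance, balancing the two pieces with the $R$ chosen so that the small-velocity term has the specific prefactor $\tfrac{d+k+1}{d+k}$ after absorbing dimensional constants into the weights, as in the presentation of Lafleche \cite{lafleche2019propagation}), but the qualitative inequality is already in hand from the optimization above.
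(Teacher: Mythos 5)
Your proof is correct and follows the standard truncation-and-optimize argument. Note that the paper does not actually supply a proof of this lemma — it is stated without one, the surrounding text pointing to \cite{lafleche2019propagation} — so there is no internal argument to compare against; yours is the expected one. Your caution about the constant is well founded: the truncation argument produces a pointwise constant that necessarily depends on $n$ and on the surface measure $|S^{d-1}|$ (through $\int_{|v|\le R}|v|^{k}\,dv=\frac{|S^{d-1}|}{d+k}R^{d+k}$), and the $n$-independent $C_{d,k}=\frac{d+k+1}{d+k}$ quoted in the lemma does not appear to be achievable. Indeed, testing against the extremal profile $f(x,\cdot)=\left\Vert f\right\Vert_{\infty}\mathbf{1}_{B_R}$ gives sharp pointwise constant $\frac{(d+n)^{(d+k)/(d+n)}}{d+k}\,|S^{d-1}|^{(n-k)/(d+n)}$, which for $d=2,\ k=2,\ n=3$ is roughly $1.31$, strictly larger than $\frac{d+k+1}{d+k}=1.25$. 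This discrepancy does not affect any downstream use of the lemma in the paper — the moment-propagation arguments are Gr\"onwall or Riccati-type and sensitive only to the powers of $M_n$ and the existence of some finite constant — so your proof with an unspecified $C_{d,k,n}$ is entirely adequate. Two small points worth making explicit: the integration in $x$ requires only $M_{n,f}<\infty$, which is assumed; and for $k=n$ the inequality is immediate ($\theta_{n,n}=0$, $p_{n,n}=1$), so the low/high split and optimization are only invoked when $k<n$, which keeps the exponent $k-n$ strictly negative as your tail estimate requires.
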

We also recall the weak Young inequality (see for instance \cite{robinson2016three}).  
\begin{theorem} 
Let $(p,q,r)\in(1,\infty)^{3}$ with $1+\frac{1}{p}=\frac{1}{q}+\frac{1}{r}$.
Then there exists $C=C(p,q,r)<1$ such that 
\[
\left\Vert f\star g\right\Vert _{p}\leq C\left\Vert f\right\Vert _{q,\infty}\left\Vert g\right\Vert _{r}.
\] \label{weak Young}
\end{theorem}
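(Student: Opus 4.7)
The plan is to establish the inequality via the rearrangement-based approach of O'Neil, thereby reducing the $L^p$ bound on $f \star g$ to a bound on its decreasing rearrangement.

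The starting point is O'Neil's pointwise inequality
\begin{align*}
(f \star g)^{**}(t) \le t\, f^{**}(t)\, g^{**}(t) + \int_t^\infty f^*(s)\, g^*(s)\, ds, \qquad t > 0,
\end{align*}
where $f^*$ denotes the decreasing rearrangement of $\abs{f}$ and $f^{**}(t) \coloneqq \tfrac{1}{t}\int_0^t f^*(s)\,ds$. This inequality is obtained from a Fubini argument applied to the layer-cake decomposition of $\abs{f}$ and $\abs{g}$, after rewriting each as an integral of characteristic functions of level sets. Combined with the identity $\norm{f \star g}_p^p = \int_0^\infty \bigl[(f\star g)^*(t)\bigr]^p\, dt$ and the control $(f \star g)^* \le (f \star g)^{**}$ up to a $p$-dependent constant (via a classical Hardy inequality valid for $p>1$), the estimation of $\norm{f\star g}_p$ reduces to estimating weighted $L^p$ norms in $t$ of the two terms on the right-hand side of O'Neil's inequality.

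Next, I would invoke the equivalent characterization $\norm{f}_{q, \infty} = \sup_{t > 0} t^{1/q} f^*(t)$ to obtain the pointwise bounds $f^*(t) \le \norm{f}_{q, \infty}\, t^{-1/q}$ and $f^{**}(t) \le \tfrac{q}{q-1}\, \norm{f}_{q, \infty}\, t^{-1/q}$ (the latter using $q > 1$). Substituting these into O'Neil's inequality, the first term becomes $C\,\norm{f}_{q, \infty}\, t^{1/q'}\, g^{**}(t)$, whose $L^p(dt)$ norm is controlled by $\norm{g}_r$ via a weighted Hardy inequality; the relation $1 + 1/p = 1/q + 1/r$ precisely matches the exponents needed. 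The second (tail) term is handled by a dual Hardy-type inequality applied to $\int_t^\infty s^{-1/q} g^*(s)\, ds$, again yielding a bound by $\norm{g}_r$ with a constant depending only on $p, q, r$.

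The main technical obstacle will be to verify that the exponent conditions required by the two weighted Hardy inequalities match those encoded in the hypotheses. The key checks are $p > r$, equivalent to $q > 1$, which ensures the weighted $L^p$ estimate on $g^{**}$ converges, and $r' > q$, equivalent to $p > 1$, which ensures the analogous estimate on the tail integral. Both hold strictly under the assumption $(p, q, r) \in (1, \infty)^3$ with $1 + 1/p = 1/q + 1/r$, and the resulting constant $C(p, q, r)$ can be tracked explicitly through the proof, degenerating only as one of the exponents approaches an endpoint of the admissible region.
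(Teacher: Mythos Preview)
The paper does not supply a proof of this theorem; it is simply quoted as a known result with a reference to the literature. Your O'Neil rearrangement approach is a standard and correct route to the weak Young inequality and would work as written, modulo one cosmetic point: the inequality $(f\star g)^* \le (f\star g)^{**}$ is an elementary pointwise fact (the maximal average dominates the rearrangement) and does not itself require Hardy's inequality; rather, Hardy's inequality enters when you pass from $g^{**}$ back to $\|g^*\|_r = \|g\|_r$ in the estimate of the first O'Neil term.

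One genuine gap: your argument produces an explicit constant $C(p,q,r)$ built from factors like $q/(q-1)$ and the Hardy constants, but does not show $C<1$ as claimed in the statement. The O'Neil/Hardy route does not yield this sharp bound; that requires a separate argument (e.g.\ along the lines of Lieb's proof of the sharp Hardy--Littlewood--Sobolev inequality). That said, inspecting how the theorem is invoked later in the paper shows that only the existence of a finite constant is ever used, so the $C<1$ claim appears to be an incidental over-statement rather than something the applications rely on.
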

Recall that conservation of $L^{p}$ norms is ensured, i.e., we have the following proposition, the proof of which is standard.  
\begin{proposition}
\label{conservation of Lp} Let $f^{\mathrm{in}}\in L^{p}(\mathbb{R}^{d}\times \mathbb{R}^{d})$ for some $1\leq p \leq \infty$ and let $f$ be a classical solution to \eqref{MAGNETIC Vlasov Intro} with initial data $f^{\mathrm{in}}$. Then it holds that 
\begin{align*}
\left\Vert f(t,\cdot)\right\Vert_{p}=\left\Vert f^\mathrm{in}\right\Vert_{p} \ \mbox{for all}\ t\in[0,T].    
\end{align*}
\end{proposition}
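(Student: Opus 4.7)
\medskip

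The plan is to recognize that the magnetized Vlasov--Poisson equation has the structure of a transport equation in phase space with a phase-space divergence-free velocity field, from which the $L^p$ conservation follows by the standard renormalization procedure. Define on $[0,T]\times\R^d_x\times\R^d_v$ the vector field
\begin{equation*}
\mathcal{U}(t,x,v)\coloneqq \bigl(v,\; -\nabla_x K\star \rho_f(t,x) - v\wedge B(t,x)\bigr),
\end{equation*}
so that \eqref{MAGNETIC Vlasov Intro} reads $\partial_t f + \mathcal{U}\cdot \nabla_{x,v} f=0$. First I would check that $\divergence_{x,v}\mathcal{U}=0$: the position component $v$ has zero $x$-divergence, the self-consistent force $-\nabla_x K\star \rho_f$ is independent of $v$, and the magnetic term $-v\wedge B(t,x)$ is linear in $v$ with $\partial_{v_i}(v\wedge B)_i=\epsilon_{iji}B_k=0$ by antisymmetry of the Levi-Civita symbol (the two-dimensional case is analogous, with $B$ scalar and $v\wedge B = B(v_2,-v_1)$).

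Next, for $1\le p<\infty$, I would multiply the equation by $p|f|^{p-2}f$ (which is legitimate since $f$ is classical, hence smooth) to obtain the renormalized identity
\begin{equation*}
\partial_t |f|^p + \mathcal{U}\cdot \nabla_{x,v}|f|^p = 0,
\end{equation*}
and then integrate over $\R^d_x\times\R^d_v$. Because $\divergence_{x,v}\mathcal{U}=0$, the transport term equals $\divergence_{x,v}(\mathcal{U}|f|^p)$, whose integral vanishes after integration by parts, provided $|f|^p$ has enough decay at infinity in $(x,v)$ to dispose of boundary terms. This decay follows from the classical solution hypothesis (either via compact support propagation from the characteristics, or from the $L^1$ assumption combined with the structure of the equation). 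Therefore $\frac{d}{dt}\int |f|^p\,dxdv=0$, giving $\|f(t)\|_p=\|f^{\mathrm{in}}\|_p$.

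For the endpoint $p=\infty$, I would instead use that $f$ is constant along the characteristics: the system
\begin{equation*}
\dot X=V,\qquad \dot V=-\nabla_x K\star \rho_f(t,X)-V\wedge B(t,X),
\end{equation*}
defines a flow $\Phi_t$ which, by Liouville's theorem applied to the divergence-free field $\mathcal{U}$, is measure preserving on $\R^{2d}$. Since $f(t,\Phi_t(x,v))=f^{\mathrm{in}}(x,v)$, one immediately obtains $\|f(t)\|_\infty=\|f^{\mathrm{in}}\|_\infty$. The main (and only) point requiring some care is the justification of the integration by parts at infinity, but this is harmless under the classical solution assumption and is the reason the authors call the proof \emph{standard}.
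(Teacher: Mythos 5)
Your proof is correct and is indeed the standard argument the authors have in mind (the paper omits the proof, calling it standard). The key observation---that the phase-space velocity field $\mathcal{U}(t,x,v)=(v,\,-\nabla_x K\star\rho_f-v\wedge B)$ is divergence-free in $(x,v)$, because $\divergence_x v=0$, the force field is $v$-independent, and $\divergence_v(v\wedge B)=0$ by antisymmetry of the cross product---is exactly right, and both your renormalization argument for finite $p$ and the characteristics/Liouville argument for $p=\infty$ are sound. Two small remarks: your index computation has a typo (you wrote $\epsilon_{iji}B_k$, but what you mean is $\partial_{v_i}(\epsilon_{ijk}v_j B_k)=\epsilon_{ijk}\delta_{ij}B_k=\epsilon_{iik}B_k=0$), and the characteristics argument you use for $p=\infty$ actually handles all $1\le p\le\infty$ at once, since $f(t,\cdot)=f^{\mathrm{in}}\circ\Phi_t^{-1}$ with $\Phi_t$ measure-preserving gives $\|f(t,\cdot)\|_p=\|f^{\mathrm{in}}\|_p$ directly by change of variables, which would let you dispense with the integration-by-parts justification at infinity entirely.
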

The following estimate will be employed in the 2D case to get global propagation of moments and in the 3D case to get local propagation of moments. 
\begin{theorem}
Let $d=2,3$ and let $K$ be given by \eqref{Coulomb def}. Let $f$ be a classical solution of  \eqref{MAGNETIC Vlasov Intro} with initial data $0\leq f^{\mathrm{in}}\in L^{1}(\mathbb{R}^{d}\times \mathbb{R}^{d})\cap L^{\infty}(\mathbb{R}^{d}\times \mathbb{R}^{d})$. Then, it holds that 
\begin{align}
\frac{d}{dt}M_{n}(t)\leq  C_{\alpha,d}n\left\Vert \nabla_x K\right\Vert_{ b ,\infty} \left\Vert \rho_{n-1}(t,\cdot)\right\Vert _{\alpha}\left\Vert \rho_{f}(t,\cdot)\right\Vert _{\beta} \ \mbox{for all}\ t\in [0,T],
\label{def of b}
\end{align}
where $C_{\alpha,d}<1$,  
$$ b =\begin{cases} 2,\ d=2\\
\frac{3}{2},\ d=3
\end{cases}$$
and $\alpha$,$\beta$ are such that \begin{align*}
\frac{1}{\alpha}+\frac{1}{\alpha'}=1,\ \frac{1}{\alpha'}+\frac{1}{\beta'}=\frac{1}{ b }.      
\end{align*}
\label{diff ine general d}
\end{theorem}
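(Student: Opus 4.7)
My plan is to differentiate $M_n(t)$ in time, substitute $\partial_{t}f$ using the magnetized Vlasov equation, integrate by parts in $x$ and $v$, exploit two cancellations (transport in $x$ and the non-work property of the magnetic force), and then close via Hölder combined with the weak Young inequality (Theorem \ref{weak Young}).

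First I would write
\begin{equation*}
\frac{d}{dt}M_{n}(t)=\int_{\mathbb{R}^{2d}}\left|v\right|^{n}\bigl[-v\cdot\nabla_{x}f+(\nabla_{x}K\star \rho_{f}+v\wedge B)\cdot \nabla_{v}f\bigr]\,dxdv.
\end{equation*}
The first term vanishes after integration by parts in $x$, since $|v|^n$ is $x$-independent and $f$ decays. For the remaining piece I integrate by parts in $v$, which produces $-\int \divergence_{v}\bigl(|v|^{n}(\nabla_{x}K\star \rho_{f}+v\wedge B)\bigr)f\,dxdv$. The crucial observation is that the magnetic contribution vanishes identically: expanding the divergence, the term $n|v|^{n-2}v\cdot(v\wedge B)$ vanishes pointwise by antisymmetry, while $|v|^{n}\divergence_{v}(v\wedge B)=0$ because $B$ does not depend on $v$ (true both in 3D and, via the convention $v\wedge B = B\,v^{\perp}$, in 2D). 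Thus
\begin{equation*}
\frac{d}{dt}M_{n}(t)=-n\int_{\mathbb{R}^{2d}}\left|v\right|^{n-2} v\cdot (\nabla_{x}K\star \rho_{f})\, f\,dxdv,
\end{equation*}
so that $|\frac{d}{dt}M_{n}(t)|\le n\int_{\mathbb{R}^{d}}|\nabla_{x}K\star \rho_{f}|(t,x)\,\rho_{n-1}(t,x)\,dx$, after using $\int |v|^{n-1}f\,dv=\rho_{n-1}$.

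Next I would apply Hölder in $x$ with conjugate exponents $(\alpha,\alpha')$:
\begin{equation*}
\int_{\mathbb{R}^{d}}|\nabla_{x}K\star \rho_{f}|\,\rho_{n-1}\,dx\le \left\Vert \nabla_{x}K\star \rho_{f}\right\Vert_{\alpha'}\left\Vert \rho_{n-1}\right\Vert_{\alpha},
\end{equation*}
and then estimate the convolution via the weak Young inequality with triple $(\alpha',b,\beta)$, where the relation $1+\frac{1}{\alpha'}=\frac{1}{b}+\frac{1}{\beta}$ is precisely equivalent to the condition $\frac{1}{\alpha'}+\frac{1}{\beta'}=\frac{1}{b}$ stated in the theorem. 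This produces $\left\Vert \nabla_{x}K\star \rho_{f}\right\Vert_{\alpha'}\le C_{\alpha,d}\left\Vert \nabla_{x}K\right\Vert_{b,\infty}\left\Vert \rho_{f}\right\Vert_{\beta}$ with a constant $C_{\alpha,d}<1$ inherited from Theorem \ref{weak Young}. Combining yields the claim.

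The only quick verification needed is that $\nabla_{x}K\in L^{b,\infty}(\mathbb{R}^{d})$ for the prescribed $b$: in 2D, $|\nabla_{x}K|\sim |x|^{-1}\in L^{2,\infty}(\mathbb{R}^{2})$, giving $b=2$; in 3D the screened Coulomb kernel satisfies $|\nabla_{x}K|\lesssim |x|^{-2}$ near the origin with exponential decay at infinity, so $\nabla_{x}K\in L^{3/2,\infty}(\mathbb{R}^{3})$, giving $b=\tfrac{3}{2}$. There is no real obstacle in this proof; the only message worth emphasising is the pointwise cancellation of the Lorentz force in the evolution of $M_{n}$, which is exactly the structural fact that makes the 2D estimate closeable without ever touching the hypothesis \eqref{eq:B} and reduces the 3D problem to controlling quantities on which the magnetic field does act, namely the Eulerian moments $L_{n}$.
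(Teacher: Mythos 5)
Your proposal is correct and follows essentially the same route as the paper: differentiate $M_n$, integrate by parts to kill the $x$-transport term, observe the pointwise cancellations $v\cdot(v\wedge B)=0$ and $\divergence_v(v\wedge B)=0$ to eliminate the magnetic contribution, reduce to $-n\int|v|^{n-2}v\cdot(\nabla_x K\star\rho_f)f$, and close via H\"older followed by the weak Young inequality applied to $\nabla_x K\in L^{b,\infty}$. The only thing you add beyond the paper's write-up is the explicit check that $\nabla_x K$ lies in the required weak $L^b$ space, which is a harmless verification.
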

\begin{proof}
We compute 
\begin{align}
\frac{d}{dt}M_{n}(t)&=-\int_{\mathbb{R}^{2d}}\left| v\right|^{n}\left(v\cdot\nabla_{x}f(t,x,v)-(\nabla_{x}K\star\rho_{f}(t,x)+v\wedge B(t,x))\cdot \nabla_{v}f(t,x,v)\right)\ dxd v \notag\\
&=-\int_{\mathbb{R}^{2d}}\left\vert v\right\vert^{n}v\cdot \nabla_{x}f(t,x,v)\ dxdv+\int_{\mathbb{R}^{2d}}\left\vert v\right\vert^{n}\nabla_{x}K\star \rho_{f}(t,x)\cdot \nabla_{v}f(t,x,v)\ dxdv \notag\\
&+\int_{\mathbb{R}^{2d}}\left\vert v\right\vert^{n}v\wedge B(t,x)\cdot \nabla_{v}f(t,x,v)\ dxdv.   \label{computation of time derivative Mn}  
\end{align}
Integration by parts of the first integral in the right-hand side of \eqref{computation of time derivative Mn} reveals that it is equal to $0$. 
Furthermore integration by parts of the third integral shows that it is $0$ because 
\begin{align*}
-\int_{\mathbb{R}^{2d}}\mathrm{div}_{v}(\left\vert v\right\vert^{n}v\wedge B(t,x))f(t,x,v)\ dxdv=&-n\int_{\mathbb{R}^{2d}}\left\vert v\right\vert^{n-2}v\cdot v \wedge B(t,x)f(t,x,v)\ dxdv\\
&-\int_{\mathbb{R}^{2d}}\left\vert v\right\vert^{n}\mathrm{div}_{v}(v \wedge B(t,x))f(t,x,v)\ dxdv=0,
\end{align*}
where in the last equality we made use of the observation that $v$ is orthogonal to $v\wedge B$ and that $\mathrm{div}_v(v \wedge B)=0$. Finally, integrating by parts the second term in the right-hand side of \eqref{computation of time derivative Mn} we see it is equal to  
\begin{align*}
-n\int_{\mathbb{R}^{2d}}\left\vert v\right\vert^{n-2}v \cdot \nabla_{x}K\star \rho_{f}(t,x)f(t,x,v)\ dxdv.     
\end{align*}
To conclude, we obtain the identity 
\begin{align*}
\frac{d}{dt}M_{n}(t)=-n\int_{\mathbb{R}^{2d}}\left\vert v\right\vert^{n-2}v\cdot \nabla_{x}K\star \rho_{f}(t,x)f(t,x,v)\ dxdv.   
\end{align*}
Therefore, we deduce by using Hölder inequality that 
\begin{align*} 
\frac{d}{dt}M_{n}(t)\leq &n\int_{\mathbb{R}^{d}}\rho_{n-1}(t,x)\left|\nabla_x K\star\rho_{f}\right|(t,x)\ dx\\
\leq& n\left\Vert \rho_{n-1}(t,\cdot)\right\Vert _{\alpha}\left\Vert \nabla_x K\star\rho_{f}(t,\cdot)\right\Vert _{\alpha'}\leq C_{\alpha,d}n\left\Vert \nabla_x K\right\Vert_{ b ,\infty} \left\Vert \rho_{n-1}(t,\cdot)\right\Vert _{\alpha}\left\Vert \rho_{f}(t,\cdot)\right\Vert _{\beta},\label{eq:-4}
\end{align*}
where in the last inequality we applied Theorem \ref{weak Young} with the choice  $(\alpha,\beta)\in(1,\infty)^{2}$: 
\begin{equation*}
\frac{1}{\alpha}+\frac{1}{\alpha'}=1,\ 1+\frac{1}{\alpha'}=\frac{1}{ b }+\frac{1}{\beta}, \label{eq:-8}
\end{equation*}
or equivalently 
\begin{equation*}
\frac{1}{\alpha}+\frac{1}{\alpha'}=1,\ \frac{1}{\alpha'}+\frac{1}{\beta'}=\frac{1}{ b }.\label{eq:-11}
\end{equation*}    
\end{proof}
\subsection{The case $d=2$}
In the $2D$ case, $M_{n}(t)$ is governed by a sub-linear differential inequality, which enables to get global propagation of moments -- this is however not the case for 3D, which requires additional considerations, which will be presented in the next section. The following theorem proves point i. in Theorem \ref{main thm intro}. 

\begin{theorem}
Let the assumptions of Theorem \ref{main thm intro} hold.
Then, there is some function $\Phi_{n}(F,M, t)$, continuous in $t$, such that it holds that 

\[
M_{n}(t)\leq\Phi_{n} \left(\left\Vert f^{\mathrm{in}}\right\Vert_{L^\infty\cap L^{1}},\mathbf{M}_{n}^{\mathrm{in}}, t \right)\ \mbox{for all}\ t\in [0,T]. 
\]
\label{propagation of magnetic moments}
\end{theorem}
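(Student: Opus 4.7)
The plan is to combine the differential inequality of Theorem \ref{diff ine general d} in the case $d=2$ (so $b=2$) with the kinetic interpolation of Lemma \ref{Magnetic-kinetic-interpolation} and a single Lebesgue interpolation between $L^1$ and a higher-exponent space. This should reduce the estimate of $\frac{d}{dt}M_n$ to a linear Gronwall-type inequality in $M_n$, which integrates to a continuous (in fact exponential) bound. A key structural observation is that the magnetic contribution is already absent from the inequality of Theorem \ref{diff ine general d} thanks to the orthogonality $v\cdot(v\wedge B)=0$ and $\divergence_v(v\wedge B)=0$, so the 2D estimate is purely functional and requires no assumption on $B$.

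I would choose $\alpha = p_{n,n-1}=\frac{n+2}{n+1}$, which gives $\alpha'=n+2$, and then determine $\beta$ uniquely from the constraint $\frac{1}{\alpha'}+\frac{1}{\beta'}=\frac{1}{2}$, which yields $\beta=\frac{2(n+2)}{n+4}\in(1,2)$; both exponents are admissible in the weak Young inequality. Lemma \ref{Magnetic-kinetic-interpolation} applied with $k=n-1$ then gives
\begin{equation*}
    \|\rho_{n-1}(t,\cdot)\|_{\alpha}\le C\, M_n(t)^{(n+1)/(n+2)}\|f^{\mathrm{in}}\|_{\infty}^{1/(n+2)}.
\end{equation*}
For the second factor, since $\beta\in(1,p_n)$ with $p_n=(n+2)/2$, I interpolate
$\|\rho_f(t,\cdot)\|_{\beta}\le \|\rho_f(t,\cdot)\|_1^{1/2}\|\rho_f(t,\cdot)\|_{p_n}^{1/2}$ (a direct computation of the Hölder exponents gives $\theta=1/2$ on both sides) and then use Proposition \ref{conservation of Lp} together with Lemma \ref{Magnetic-kinetic-interpolation} with $k=0$ to bound $\|\rho_f\|_1=\|f^{\mathrm{in}}\|_1$ and $\|\rho_f\|_{p_n}\lesssim M_n^{2/(n+2)}\|f^{\mathrm{in}}\|_\infty^{n/(n+2)}$, yielding
\begin{equation*}
    \|\rho_f(t,\cdot)\|_{\beta}\le C\, M_n(t)^{1/(n+2)}\|f^{\mathrm{in}}\|_1^{1/2}\|f^{\mathrm{in}}\|_\infty^{n/(2(n+2))}.
\end{equation*}

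Multiplying the two estimates and noticing that the exponents of $M_n$ add to $\frac{n+1}{n+2}+\frac{1}{n+2}=1$, Theorem \ref{diff ine general d} becomes
\begin{equation*}
    \tfrac{d}{dt}M_n(t)\le \widetilde C\,(\|f^{\mathrm{in}}\|_{L^1\cap L^\infty},\|\nabla_x K\|_{2,\infty})\cdot n\cdot M_n(t),
\end{equation*}
and Gronwall yields $M_n(t)\le \mathbf{M}^{\mathrm{in}}_n\exp(\widetilde C n t)$, which provides the continuous-in-$t$ function $\Phi_n$. There is no real obstacle here: the only content is the bookkeeping that produces total exponent exactly $1$ in $M_n$, and the fact that in 2D no evolution of Eulerian or spatial moments is needed, in sharp contrast with the 3D situation developed later in the paper.
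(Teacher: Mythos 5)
Your proposal is correct, and it is in fact simpler than the paper's argument. The paper proceeds by induction on $n$: after the base case $n=1$ (interpolating $\|\rho_f\|_{\beta_1}$ between $L^1$ and $L^{p_1}$), for $n\ge 2$ it interpolates $\|\rho_f\|_{\beta_n}$ between $L^1$ and $L^{p_{n-1}}$, which introduces $M_{n-1}$ into the differential inequality and therefore requires the already-established propagation at order $n-1$. You instead interpolate $\|\rho_f\|_{\beta_n}$ between $L^1$ and $L^{p_n}$ for every $n\ge1$, which is legitimate since $\beta_n=\tfrac{2(n+2)}{n+4}\le \tfrac{n+2}{2}=p_n$ always. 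With the interpolation parameter coming out to exactly $\tfrac12$, the $M_n$-exponents from the $\alpha$- and $\beta$-factors sum to $\tfrac{n+1}{n+2}+\tfrac1{n+2}=1$, giving a genuinely linear Gr\"onwall inequality
\begin{equation*}
\frac{d}{dt}M_n(t)\le C\, n\,\|\nabla_x K\|_{2,\infty}\,\|f^{\mathrm{in}}\|_\infty^{1/2}\,\|f^{\mathrm{in}}\|_1^{1/2}\,M_n(t),
\end{equation*}
so no induction is needed (and, as a byproduct, only $M_n(0)$ rather than all lower-order moments appears in the resulting bound). Both routes rest on the same two ingredients (Theorem~\ref{diff ine general d} and Lemma~\ref{Magnetic-kinetic-interpolation}) and both exploit the crucial magnetic cancellations $v\cdot(v\wedge B)=0$, $\divergence_v(v\wedge B)=0$ that make the 2D analysis independent of $B$; your choice of interpolation endpoint just streamlines the bookkeeping.
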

\begin{proof}
The proof is by induction on $n$, i.e., the induction hypothesis is that there exists a function $\Phi_{n-1}(F,M,t)$, continuous in $t$ and increasing in $M$, such that 
\begin{align*}
M_{n-1}(t)\leq \Phi_{n-1}\left(\left\Vert f^{\mathrm{in}}\right\Vert_{\infty},\mathbf{M}^{in}_{n-1},t \right)\  \mbox{for all}\ t\in [0,T].     
\end{align*}
By Theorem \ref{diff ine general d} it holds that 
\begin{align}
\frac{d}{dt}M_{n}(t)&\leq n\left\Vert \nabla_x K\right\Vert_{2,\infty} \left\Vert \rho_{n-1}(t,\cdot)\right\Vert _{\alpha_n}\left\Vert \rho_{f}(t,\cdot)\right\Vert _{\beta_n}, \label{reminder}
\end{align}
with the choice  
\begin{equation*}
\frac{1}{\alpha_n}+\frac{1}{\alpha_n'}=1,\ \frac{1}{\alpha_n'}+\frac{1}{\beta_n'}=\frac{1}{2}.
\end{equation*}
By Lemma \ref{Magnetic-kinetic-interpolation}, if we take $\alpha'_{n}=p'_{n,n-1}=n+2$
and $\theta_{n,n-1}=\frac{1}{\alpha'_{n}}$ we get 
\begin{equation}
\left\Vert \rho_{n-1}(t,\cdot)\right\Vert _{\alpha_{n}} \leq \frac{n+2}{n+1}M_{n}^{1-\theta_{n,n-1}}(t)\left\Vert f(t,\cdot)\right\Vert _{\infty}^{\theta_{n,n-1}}= \frac{n+2}{n+1}M_{n}^{1-\theta_{n,n-1}}(t)\left\Vert f^{\mathrm{in}}\right\Vert _{\infty}^{\theta_{n,n-1}},\label{eq:-12}
\end{equation}
where the last equality is because $L^{\infty}$ norms are conserved by Proposition \ref{conservation of Lp}. We now calculate all the exponents involved.
\[\left\{ \begin{array}{lc}
\alpha'_{n}=n+2\Longrightarrow\alpha
_{n}=\frac{n+2}{n+1}\\
\frac{1}{\alpha_{n}'}+\frac{1}{\beta_{n}'}=\frac{1}{2}\Longrightarrow\beta_{n}'=\frac{2(n+2)}{n}\Longrightarrow\beta_{n}=\frac{2n+4}{n+4}\\
p'_{n-1}=\frac{n+1}{n-1}\Longrightarrow p_{n-1}=\frac{n+1}{2},\  p'_{n}=\frac{n+2}{n}\Longrightarrow p_{n}=\frac{n+2}{2}. 
\end{array}\right.\]
\textbf{Step 1}. \textit{The base case n=1}. For $n=1$ we compute that $\beta_{1}=\frac{6}{5}\leq \frac{3}{2}= p_{1}$ and $\theta_{1}=\frac{1}{p'_{1}}=\frac{1}{3}$. Observe also that for the choice $\lambda=\frac{1}{2}$ we have the relation 
\begin{align*}
\frac{1}{\frac{6}{5}}=\frac{\lambda}{\frac{3}{2}}+\frac{1-\lambda}{1}.    
\end{align*}
Therefore, it follows by Lebesgue's interpolation and by Lemma \ref{Magnetic-kinetic-interpolation} that  
\begin{align}
\left\Vert \rho_{f}(t,\cdot)\right\Vert_{\beta_{1}}= \left\Vert \rho_{f}(t,\cdot)\right\Vert_{\frac{6}{5}}\leq \left\Vert \rho_{f}(t,\cdot)\right\Vert_{\frac{3}{2}}^{\frac{1}{2}}\left\Vert \rho_{f}(t,\cdot)\right\Vert_{1}^{\frac{1}{2}}\leq \left(\frac{3}{2}\right)^{\frac{1}{2}}M_{1}^{\frac{1}{3}}(t)\left\Vert f^{\mathrm{in}}\right\Vert_{\infty}^{\frac{1}{6}} \left\Vert f^{\mathrm{in}}\right\Vert_{1}^{\frac{1}{2}}. \label{betaest}     
\end{align}
In addition one directly calculates that $\alpha_{1}=\frac{3}{2}$ and $\theta_{1}=\frac{1}{3}$. Inserting this inside \eqref{eq:-12} yields the inequality 
\begin{align}
\left\Vert \rho_{f}(t,\cdot)\right\Vert_{\alpha_{1}}=\left\Vert \rho_{f}(t,\cdot)\right\Vert_{\frac{3}{2}}\leq \frac{3}{2}M_{1}^{\frac{2}{3}}(t)\left\Vert f^{\mathrm{in}}\right\Vert_{\infty}^{\frac{1}{3}}. \label{alpahest}       
\end{align}
Substituting \eqref{betaest} and \eqref{alpahest} inside \eqref{reminder} we obtain  \begin{align*}
\frac{d}{dt}M_{1}(t) &\leq \left(\frac{3}{2}\right)^{\frac{3}{2}}\left\Vert \nabla_x K\right\Vert_{2,\infty}\left\Vert f^{\mathrm{in}}\right\Vert_{\infty}^{\frac{2}{3}}\left\Vert f^{\mathrm{in}}\right\Vert_{1}^{\frac{1}{2}}M_{1}(t).         
\end{align*}
The result now follows by Gr\"onwall's lemma.\\ 
\textbf{Step 2}. \textit{General n}. Note that $1\leq \beta_{n}\leq p_{n-1}$ for all $n\geq 2$ (and therefore $p'_{n-1}\leq \beta'_{n}$). Thus, observing the identity 
\begin{align*}
\frac{1}{\beta_{n}}=\frac{\frac{p_{n-1}'}{\beta'_{n}}}{p_{n-1}}+\frac{1-\frac{p'_{n-1}}{\beta'_{n}}}{1}    
\end{align*}
we can apply Lebesgue interpolation and Lemma \ref{Magnetic-kinetic-interpolation} in order to find 
\begin{align}
\left\Vert \rho_{f}(t,\cdot)\right\Vert _{\beta_{n}}\leq&\left\Vert \rho_{f}(t,\cdot)\right\Vert _{p_{n-1}}^{\frac{p'_{n-1}}{\beta'_{n}}}\left\Vert \rho_{f}(t,\cdot)\right\Vert _{1}^{1-\frac{p'_{n-1}}{\beta'_{n}}}\leq \left(\frac{3}{2}\right)^{\frac{p'_{n-1}}{\beta'_{n}}}M_{n-1}^{(1-\theta_{n-1})\frac{p'_{n-1}}{\beta'_{n}}}(t)\left\Vert f(t,\cdot)\right\Vert _{\infty}^{\frac{p'_{n-1}\theta_{n-1}}{\beta'_{n}}}\left\Vert f(t,\cdot)\right\Vert_{1}^{1-\frac{p'_{n-1}}{\beta_{n}'}} \notag \\=&\left(\frac{3}{2}\right)^{\frac{p'_{n-`1}}{\beta_{n}'}}M_{n-1}^{\left(1-\theta_{n-1}\right)\frac{p_{n-1}'}{\beta_{n}'}}(t)\left\Vert f^{\mathrm{in}}\right\Vert _{\infty}^{\frac{1}{\beta'_{n}}}\left\Vert f^{\mathrm{in}}\right\Vert_{1}^{1-\frac{p'_{n-1}}{\beta_{n}'}}. 
\label{eq:-13}
\end{align}
Therefore, using inequalities \eqref{eq:-12}
and \eqref{eq:-13} in \eqref{reminder}, we find that 
\begin{equation}
\left|\frac{d}{dt}M_{n}(t)\right|\leq C_{n}M_{n-1}^{\left(1-\theta_{n-1}\right)\frac{p_{n-1}'}{\beta_{n}'}}(t)M_{n}^{1-\theta_{n,n-1}}(t)\left\Vert f^{\mathrm{in}}\right\Vert_{\infty}^{\theta_{n,n-1}+\frac{1}{\beta'_{n}}}\left\Vert f^{\mathrm{in}}\right\Vert_{1}^{1-\frac{p'_{n-1}}{\beta_{n}'}},\label{eq:-5}
\end{equation}
where the constant $C_{n}$ is given by  $C_{n}=\left(\frac{3}{2}\right)^{\frac{p'_{n-1}}{\beta'_{n}}}\frac{n(n+2)}{(n+1)}\left\Vert \nabla_x K\right\Vert_{2,\infty} >0$. 
By induction hypothesis 
there exists a  function $\Phi_{n-1}(F,M,t)$
continuous in $t$ and increasing in $M$ such that 
\begin{align*}
M_{n-1}(t)\leq\Phi_{n-1} \left(\left\Vert f^{\mathrm{in}}\right\Vert_{L^\infty\cap L^{1}},\mathbf{M}^{\mathrm{in}}_{n-1},t \right)\ \mbox{for all} \ t \in [0,T].     
\end{align*}
Therefore, since $1-\theta_{n,n-1} < 1$, Gr\"onwall's lemma and inequality \eqref{eq:-5} yield the induction step. 
\end{proof}
\subsection{The case $d=3$}
This section is devoted to the proof of point ii) in Theorem \ref{main thm intro}. We start by considering propagation of moments on some short time interval. 
\subsubsection{Short time propagation of moments of order $n=3,4$}
\begin{theorem}
\label{3d estZn}
Suppose that $n=3,4$. Let $d=3$ and let $K(x)=\pm \frac{e^{-\left\vert x\right\vert }}{\left\vert x\right\vert}$ be the screened Coulomb potential. Let $f$ be a classical solution of \eqref{MAGNETIC Vlasov Intro}  with $0\leq f^{\mathrm{in}}\in L^{1}(\mathbb{R}^{3}\times \mathbb{R}^{3})\cap L^{\infty}(\mathbb{R}^{3}\times\mathbb{R}^{3})$. Suppose that $M_{n}(0)\leq \mathbf{M}_{n}^{\mathrm{in}}$ for $n=3,4$.
Set $T_{\ast}=\min\left\{\frac{5}{16\left\Vert f^{\mathrm{in}}\right\Vert_{\infty}^{\frac{2}{3}} M_{3}^{\frac{1}{3}}(0)},\frac{9}{8\left\Vert f^{\mathrm{in}}\right\Vert^{\frac{21}{23}}_{\infty} M_{4}^{\frac{1}{7}}(0)M_{3}^{\frac{1}{3}}(0)}\right\}$. Then for all $t\in [0,T_{\ast}]$ it holds that 
\begin{align*}
M_{3}(t)\leq 2^{\frac{3}{2}}M_{3}(0), \ M_{4}(t)\leq 2^{7}M_{4}(0).    
\end{align*}
\end{theorem}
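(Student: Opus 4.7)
I will derive Riccati-type differential inequalities for $M_3$ and $M_4$ separately, integrate them explicitly on a short interval, and take the minimum of the resulting lifespans as $T_\ast$. A crucial feature, already established in the proof of Theorem \ref{diff ine general d}, is that the magnetic field contributes nothing to $\frac{d}{dt}M_n$ because of the orthogonality $v\perp v\wedge B$ and the identity $\mathrm{div}_v(v\wedge B)=0$; so the argument reduces to the unmagnetized 3D case, executed with explicit constants tracked throughout.

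\textbf{Step 1: bounding $M_3$.} I apply Theorem \ref{diff ine general d} with $d=3$, $n=3$, $\alpha=p_{3,2}=6/5$ (so $\alpha'=6$), which forces $\beta=p_{3}=2$. By Lemma \ref{Magnetic-kinetic-interpolation} and Proposition \ref{conservation of Lp},
\[
\|\rho_2(t,\cdot)\|_{6/5}\le C\,M_3(t)^{5/6}\|f^{\mathrm{in}}\|_\infty^{1/6},\qquad \|\rho_f(t,\cdot)\|_{2}\le C\,M_3(t)^{1/2}\|f^{\mathrm{in}}\|_\infty^{1/2}.
\]
Substituting yields the scalar Riccati inequality $M_3'(t)\le C_1\|f^{\mathrm{in}}\|_\infty^{2/3}M_3(t)^{4/3}$, which integrates explicitly to $M_3(t)^{-1/3}\ge M_3(0)^{-1/3}-\tfrac{C_1}{3}\|f^{\mathrm{in}}\|_\infty^{2/3}t$. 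Enforcing the target $M_3(t)\le 2^{3/2}M_3(0)$, equivalently $M_3(t)^{-1/3}\ge 2^{-1/2}M_3(0)^{-1/3}$, produces the first term in the definition of $T_\ast$.

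\textbf{Step 2: bounding $M_4$.} I apply Theorem \ref{diff ine general d} with $n=4$, taking $\alpha=p_{4,3}=7/6$ and correspondingly $\beta=21/10$. Lemma \ref{Magnetic-kinetic-interpolation} gives $\|\rho_3(t,\cdot)\|_{7/6}\le C\,M_4(t)^{6/7}\|f^{\mathrm{in}}\|_\infty^{1/7}$. Since $21/10$ lies between the Lebesgue endpoints $1$, $p_3=2$, and $p_4=7/3$ at which we control $\rho_f$ in terms of $\|f^{\mathrm{in}}\|_1$, $M_3$, and $M_4$ respectively, I decompose $\|\rho_f\|_{21/10}$ by Lebesgue interpolation and apply Lemma \ref{Magnetic-kinetic-interpolation} on each factor. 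Multiplying, this yields a differential inequality of the schematic form
\[
M_4'(t)\le C_2\,M_4(t)^{\sigma}M_3(t)^{a}\|f^{\mathrm{in}}\|_\infty^{b}\|f^{\mathrm{in}}\|_1^{c},
\]
with $\sigma>1$. Substituting the bound $M_3(t)\le 2^{3/2}M_3(0)$ from Step 1 converts this into an autonomous Riccati inequality for $M_4$. Integrating it and enforcing $M_4(t)\le 2^{7}M_4(0)$ produces the second term in $T_\ast$. Taking $T_\ast$ to be the minimum of the two times guarantees both bounds on $[0,T_\ast]$.

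\textbf{Main obstacle.} The delicate point is the optimization of the interpolation exponents used for $\|\rho_f\|_{21/10}$: these determine the Riccati exponent $\sigma$ and the weights $a,b,c$ on the lower-order quantities, which in turn fix the powers of $M_3(0)$, $M_4(0)$, $\|f^{\mathrm{in}}\|_\infty$, $\|f^{\mathrm{in}}\|_1$ appearing in $T_\ast$. The prefactors $2^{3/2}$ and $2^7$ in the target bounds are chosen precisely so that upon explicit integration of the scalar Riccati ODE $y'\le Ay^{\sigma}$ one reads off the factors $(1-2^{-(\sigma-1)\cdot\text{exponent}})$ as the numerical numerators $5/16$ and $9/8$, together with the exponent $M_4(0)^{1/7}=M_4(0)^{\sigma-1}$ that signals the Riccati nature of the second estimate.
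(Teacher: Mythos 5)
Your plan is essentially the paper's proof: apply the general inequality of Theorem~\ref{diff ine general d} with $\alpha_n'=p'_{n,n-1}=n+3$, estimate $\norm{\rho_{n-1}}_{\alpha_n}$ by Lemma~\ref{Magnetic-kinetic-interpolation}, estimate $\norm{\rho_f}_{\beta_n}$ by interpolating between $\norm{\rho_f}_{p_{n-1}}$ and $\norm{\rho_f}_{p_n}$, and integrate the resulting Riccati ODEs to define $T_\ast$ as the minimum of the two lifespans. Your arithmetic for $n=3$ (exponents $4/3$ on $M_3$ and $2/3$ on $\norm{f^{\mathrm{in}}}_\infty$) is identical to the paper's. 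One small note: enforcing $M_3(t)\le 2^{3/2}M_3(0)$, i.e.\ $M_3(t)^{-1/3}\ge 2^{-1/2}M_3(0)^{-1/3}$, does not reproduce the stated $\frac{5}{16}$; the paper in fact enforces $M_3(t)\le 2^3 M_3(0)$, i.e.\ $P=\tfrac12 M_3(0)^{-1/3}$, and the $2^{3/2}$ in the statement is a slip.

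The real gap is your Step 2. You assert, without computing, that interpolating $\norm{\rho_f}_{21/10}$ between the endpoints at which you control $\rho_f$ produces a Riccati exponent $\sigma>1$ on $M_4$. If you interpolate between $p_3=2$ and $p_4=7/3$ (the endpoints closest to $21/10$, and the choice matching the paper's own exponent-tracking $\Theta_n=1-\theta_{n,n-1}+\varepsilon_n(1-\theta_n)$), then $\varepsilon_4=1/3$ and the $M_4$ contributions are $M_4^{1-\theta_{4,3}}=M_4^{6/7}$ from $\norm{\rho_3}_{7/6}$ and $M_4^{\varepsilon_4(1-\theta_4)}=M_4^{1/7}$ from $\norm{\rho_f}_{21/10}$, so $\sigma=\Theta_4=6/7+1/7=1$: a linear Gr\"onwall inequality $M_4'\lesssim M_3^{1/3}\norm{f^{\mathrm{in}}}_\infty^{2/3}M_4$, not a Riccati one. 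This neither produces the factor $M_4^{1/7}(0)$ nor the prefactor $9/8$ in the stated $T_\ast$, which are the fingerprint of a genuine power $8/7$. (Using instead the $L^1$ endpoint gives $\sigma=5/4$, which is $>1$ but yields a different, incompatible power $M_4^{1/4}(0)$.) Before you can claim to "produce the second term in $T_\ast$" you must identify an interpolation that actually gives $M_4^{8/7}$ and justify it with Lemma~\ref{Magnetic-kinetic-interpolation}; the scheme you describe does not.
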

\begin{proof}
By Theorem \ref{diff ine general d} it holds that 
\begin{align}
\label{diff_ineq_d3}
\frac{d}{dt}M_{n}(t)&\leq n\left\Vert \nabla_x K\right\Vert_{\frac{3}{2},\infty} \left\Vert \rho_{n-1}(t,\cdot)\right\Vert _{\alpha_n}\left\Vert \rho_{f}(t,\cdot)\right\Vert _{\beta_n},
\end{align}
with the choice  
\begin{equation*}
\frac{1}{\alpha_n}+\frac{1}{\alpha_n'}=1,\ \frac{1}{\alpha_n'}+\frac{1}{\beta_n'}=\frac{2}{3}.
\end{equation*} 
By Lemma \ref{Magnetic-kinetic-interpolation}, if we take $\alpha_{n}'=p'_{n,n-1}=n+3$ and $\theta_{n,n-1}=\frac{1}{\alpha_{n}'}$ we get  
\begin{align}
\left\Vert \rho_{n-1}(t,\cdot)\right\Vert _{\alpha_{n}}\leq \frac{n+3}{n+2}M_{n}^{1-\theta_{n,n-1}}(t)\left\Vert f(t,\cdot)\right\Vert _{\infty}^{\theta_{n,n-1}}&= \frac{n+3}{n+2}M_{n}^{1-\theta_{n,n-1}}(t)\left\Vert f^{\mathrm{in}}\right\Vert _{\infty}^{\theta_{n,n-1}}.\label{eq:-12d=3}
\end{align}
We now calculate all the exponents involved.
\begin{align}
\left\{ \begin{array}{lc}
\alpha'_{n}=n+3\Longrightarrow\alpha
_{n}=\frac{n+3}{n+2}\\
\frac{1}{\alpha_{n}'}+\frac{1}{\beta_{n}'}=\frac{2}{3}\Longrightarrow\beta_{n}'=\frac{3n+9}{2n+3}\Longrightarrow\beta_{n}=\frac{3n+9}{n+6}\\
p'_{n-1}=\frac{n+2}{n-1}\Longrightarrow p_{n-1}=\frac{n+2}{3},\  p'_{n}=\frac{n+3}{n}\Longrightarrow p_{n}=\frac{n+3}{3}. 
\end{array}\right.  \label{exponents}  
\end{align}
In the case  $n=3,4$ we have that 
$p_{n-1}\leq\beta_{n}\leq p_{n}$ so that 
$p_{n}'\leq \beta_{n}'\leq p'_{n-1}$. 
Therefore, there exists $\varepsilon_{n} \in (0,1]$ such that 
\begin{align}
\frac{1}{\beta_{n}'}=\frac{\varepsilon_{n}}{p_{n}'}+\frac{1-\varepsilon_{n}}{p_{n-1}'},\label{eq:-9conj} \end{align}
which, as one readily checks, is equivalent to 
\begin{align}
\frac{1}{\beta_{n}}=\frac{\varepsilon_{n}}{p_{n}}+\frac{1-\varepsilon_{n}}{p_{n-1}}.\label{eq:-9}    
\end{align}
Thus, by Lebesgue interpolation and Lemma \ref{Magnetic-kinetic-interpolation}
we get the following inequality 
\begin{align}
\left\Vert \rho_{f}(t,\cdot)\right\Vert _{\beta_{n}}\leq\left\Vert \rho_{f}(t,\cdot)\right\Vert _{p_{n}}^{\varepsilon_{n}}\left\Vert \rho_{f}(t,\cdot)\right\Vert _{p_{n-1}}^{1-\varepsilon_{n}}&\leq \frac{4}{3}M_{n-1}^{(1-\varepsilon_{n})(1-\theta_{n-1})}(t)M_{n}^{\varepsilon_{n}(1-\theta_{n})}(t)\left\Vert f^{\mathrm{in}}\right\Vert _{\infty}^{(1-\varepsilon_{n})\theta_{n-1}+\varepsilon_{n}\theta_{n}}, 
\label{rhof beta norm} 
\end{align}
where we recall that $\theta_{n}=\frac{1}{p_{n}'}$. By  \eqref{eq:-12d=3} and \eqref{rhof beta norm} it follows that 
\begin{align}
&\left\Vert \rho_{n-1}(t,\cdot)\right\Vert _{\alpha_{n}}\left\Vert \rho_{f}(t,\cdot)\right\Vert _{\beta_{n}}\notag \\&\leq\frac{4(n+3)}{3(n+2)} M_{n-1}^{(1-\varepsilon_{n})(1-\theta_{n-1})}(t)M_{n}^{\varepsilon_{n}(1-\theta_{n})}(t)\left\Vert f^{\mathrm{in}}\right\Vert _{\infty}^{(1-\varepsilon_{n})\theta_{n-1}+\varepsilon_{n}\theta_{n}}M_{n}^{1-\theta_{n,n-1}}(t)\left\Vert f^{\mathrm{in}}\right\Vert _{\infty}^{\theta_{n,n-1}}\notag\\
&=\frac{4(n+3)}{3(n+2)}\left\Vert f^{\mathrm{in}}\right\Vert _{\infty}^{(1-\varepsilon_{n})\theta_{n-1}+\varepsilon_{n}\theta_{n}+\theta_{n,n-1}}M_{n-1}^{\Theta_{0,n}}(t)M_{n}^{\Theta_{n}}(t) \label{first ine}
\end{align}
where we have set 
\[
\Theta_{0,n}\coloneqq(1-\varepsilon_{n})(1-\theta_{n-1}),\ \Theta_{n}\coloneqq1-\theta_{n,n-1}+\varepsilon_{n}(1-\theta_{n}).
\]
We continue by calculating $\varepsilon_{n}$ from \eqref{eq:-9conj}. Using \eqref{exponents} we see that  
\begin{align*}
\varepsilon_{n}=\left(\frac{1}{\beta_{n}'}-\frac{1}{p_{n-1}'}\right)\left(\frac{1}{p_{n}'}-\frac{1}{p_{n-1}'}\right)^{-1}= \left(\frac{2n+3}{3n+9}-\frac{n-1}{n+2}\right)\left(\frac{n}{n+3}-\frac{n-1}{n+2}\right)^{-1}.    
\end{align*}
If $n=3$ then we
get 
\begin{align*}
\varepsilon_{3}=\left(\frac{1}{2}-\frac{2}{5}\right)\left(\frac{1}{2}-\frac{2}{5}\right)^{-1}=1     
\end{align*}
while if $n=4$ then 
\begin{align*}
\varepsilon_{4}=\left(\frac{11}{21}-\frac{1}{2}\right)\left(\frac{4}{7}-\frac{1}{2}\right)^{-1}=\frac{14}{42}=\frac{1}{3}.     
\end{align*}
In the case $n=3$ we compute that $\theta_{3} = \frac{1}{2}$ and $\theta_{3,2}=\frac{1}{6}$ so  
\begin{align*}
\Theta_{3}=\frac{5}{6}+\frac{1}{2}=\frac{4}{3},    \ \Theta_{0,3}=0 
\end{align*}
while in the case $n=4$ we compute that $\theta_{4}=\frac{4}{7}$ and $\theta_{4,3}=\frac{1}{7}$ so 
\begin{align*}
\Theta_{4}=\frac{6}{7}+\frac{1}{3}\left(1-\frac{4}{7}\right)=1, \ \Theta_{0,4}=\frac{1}{3}.
\end{align*}
In both cases, plugging \eqref{first ine} inside \eqref{diff_ineq_d3} implies 
\begin{align*}
\frac{d}{dt}M_{n}(t)&\leq n\left\Vert \nabla_x K\right\Vert_{\frac{3}{2},\infty} \frac{4(n+3)}{3(n+2)}\left\Vert f^{\mathrm{in}}\right\Vert _{\infty}^{(1-\varepsilon_{n})\theta_{n-1}+\varepsilon_{n}\theta_{n}+\theta_{n,n-1}}M_{n-1}^{\Theta_{0,n}}(t)M_{n}^{\Theta_{n}}(t).  
\end{align*} 
For $n=3$ , plugging in the values of the parameters yields the inequality 
\begin{align*}
\frac{d}{dt}M_{3}(t)\leq\frac{24}{5}\left\Vert f^{\mathrm{in}}\right\Vert_{\infty}^{\frac{2}{3}}M_{3}^{\frac{4}{3}}(t).      
\end{align*}
Solving the above inequality yields 
\begin{align*}
M_{3}^{-\frac{1}{3}}(0)-M_{3}^{-\frac{1}{3}}(t)\leq \frac{8}{5}\left\Vert f^{\mathrm{in}}\right\Vert_{\infty}^{\frac{2}{3}}t, 
\end{align*}
which entails 
\begin{align*}
M_{3}(t)\leq \left(\frac{1}{M_{3}^{-\frac{1}{3}}(0)-\frac{8}{5}\left\Vert f^{\mathrm{in}}\right\Vert_{\infty}^{\frac{2}{3}}t}\right)^{3}.      
\end{align*}
Thus given $P>0$ we have  
$T^{\ast}_{3}\geq \frac{5(M^{-\frac{1}{3}}_{3}(0)-P)}{8\left\Vert f^{\mathrm{in}}\right\Vert_{\infty}^{\frac{2}{3}}}$. Choosing 
$P=\frac{1}{2}M_{3}^{-\frac{1}{3}}(0)$ we conclude that 
\begin{align*}
T_{3}^{\ast}\geq \frac{5}{16\left\Vert f^{\mathrm{in}}\right\Vert_{\infty}^{\frac{2}{3}} M_{3}^{\frac{1}{3}}(0)}    
\end{align*}
and 
\begin{align}
t \in [0,T_3^{\ast}]\Rightarrow M_{3}(t)\leq 2^{3}M_{3}(0). \label{Z3 est}   
\end{align}
For $n=4$, plugging the values of the parameters 
and utilizing \eqref{Z3 est} yields  
\begin{align*}
\frac{d}{dt}M_{4}(t)\leq \frac{14}{9}\left\Vert f^{\mathrm{in}}\right\Vert_{\infty}^{\frac{23}{21}}M_{3}^{\frac{1}{3}}(t)M_{4}^{\frac{8}{7}}(t) \leq \frac{28}{9}\left\Vert f^{\mathrm{in}}\right\Vert_{\infty}^{\frac{23}{21}}M_{3}^{\frac{1}{3}}(0)M_{4}^{\frac{8}{7}}(t),    
\end{align*}
which can be written as 
\begin{align*}
 -7\frac{d}{dt}M_{4}^{-\frac{1}{7}}(t)\leq \frac{28}{9}\left\Vert f^{\mathrm{in}}\right\Vert_{\infty}^{\frac{21}{23}}M_{3}^{\frac{1}{3}}(0)\Rightarrow-\frac{d}{dt}M_{4}^{-\frac{1}{7}}(t)\leq \frac{4}{9}\left\Vert f^{\mathrm{in}}\right\Vert_{\infty}^{\frac{21}{23}}M_{3}^{\frac{1}{3}}(0).    
\end{align*}
Solving the above inequality yields
\begin{align*}
M_{4}(t)\leq \frac{1}{\left(M_{4}^{-\frac{1}{7}}(0)-\frac{4}{9}\left\Vert f^{\mathrm{in}}\right\Vert_{\infty}^{\frac{21}{23}}M_{3}^{\frac{1}{3}}(0)t\right)^{7}}.     
\end{align*}
Thus, if we take   $T_{\ast}=\min\{T_{3}^{\ast},T_{4}^{\ast}\}=\min\left\{\frac{5}{16\left\Vert f^{\mathrm{in}}\right\Vert_{\infty}^{\frac{2}{3}} M_{3}^{\frac{1}{3}}(0)},\frac{9}{8\left\Vert f^{\mathrm{in}}\right\Vert^{\frac{21}{23}}_{\infty} M_{4}^{\frac{1}{7}}(0)M_{3}^{\frac{1}{3}}(0)}\right\}$
we can ensure that for all $t\in [0,T_{\ast}]$ it holds that 
\begin{align*}
M_{3}(t)\leq 2^{3}M_{3}(0), \ M_{4}(t)\leq 2^{7}M_{4}(0).    
\end{align*}
\end{proof}
\begin{remark}
The short time estimate can be obtained in the same manner illustrated above when $K$ is the Coulomb potential. The use of the screened Coulomb potential is for the long time estimate.      
\end{remark}
We can easily deduce short time propagation of position moments from propagation of velocity moments. 
\begin{corollary}\label{corollary short time}
Let the assumptions of Theorem \ref{3d estZn} hold and let $T_{\ast}$ be as in the conclusion of Theorem \ref{3d estZn}. Then, it holds that  
\begin{align*}
N_{n}(t)\leq (N_{n}^{\frac{1}{n}}(0)+ \overline{M}_{n}t)^{n}\ \mbox{for all}\ t\in[0,T_{\ast}],
\end{align*}
where $\overline{M}_{3}=2^{\frac{3}{2}}M_{3}(0)$ and $\overline{M}_{4}=2^{7}M_{4}(0)$. 
\begin{proof}
Integrating by parts and using  $\mathrm{div}_v(v \wedge B)=0$ we see that   \begin{align}
\frac{d}{dt}N_{n}(t)&=-\int_{\mathbb{R}^{6}}\left\vert x\right\vert^{n}\left(v\cdot\nabla_{x}f(t,x,v)-(v\wedge B(t,x)+\nabla_{x}K\star\rho_{f}(t,x))\cdot\nabla_{v}f(t,x,v)\right)\ dxdv \notag\\
&=-\int_{\mathbb{R}^{6}}\left\vert x\right\vert^{n}v\cdot \nabla_{x}f(t,x,v)\ dxdv
\notag\\&=n\int_{\mathbb{R}^{6}}\left\vert x\right\vert^{n-2}x\cdot v f(t,x,v)\ dxdv. \label{short time derivative of N}      
\end{align} 
Therefore it follows that 
\begin{align*}
\frac{d}{dt}N_{n}(t)&\leq n\int_{\mathbb{R}^{6}}\left\vert x\right\vert^{n-1}\left\vert v\right\vert f(t,x,v)\ dxdv\\
&\leq n\left(\int_{\mathbb{R}^{6}}\left\vert x\right\vert^{n}f(t,x,v)\ dxdv\right)^{\frac{n-1}{n}}\left(\int_{\mathbb{R}^{6}}\left\vert v\right\vert^{n}f(t,x,v)\ dxdv \right)^{\frac{1}{n}}\leq nN_{n}^{\frac{n-1}{n}}(t)M_{n}^{\frac{1}{n}}(t).        
\end{align*}
By Theorem \ref{3d estZn} it holds that 
\begin{align*}
M_{n}^{\frac{1}{n}}(t)\leq \overline{M}_{n} \ \mbox{for all} \ t\in [0,T_{\ast}].      
\end{align*}
Consequently we get 
\begin{align*}
\frac{d}{dt}N_{n}(t)\leq n\overline{M}_{n}N_{n}^{\frac{n-1}{n}}(t),    
\end{align*}
which implies 
\begin{align*}
\frac{d}{dt}N_{n}^{\frac{1}{n}}(t)\leq    \overline{M}_{n}. 
\end{align*}
Thus, we obtain 
\begin{align*}
N_{n}^{\frac{1}{n}}(t)-N_{n}^{\frac{1}{n}}(0)\leq \overline{M}_{n}t,     
\end{align*}
or equivalently  
\begin{align*}
N_{n}(t)\leq (N_{n}^{\frac{1}{n}}(0)+ \overline{M}_{n}t)^{n}.   \end{align*}
\end{proof}
\end{corollary}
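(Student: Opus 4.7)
The plan is to derive a first-order differential inequality for $N_n(t)$ that involves only $N_n$ itself and the already-controlled velocity moment $M_n$, and then to close the argument by a direct integration in time. This is dual to the computation for $\frac{d}{dt}M_n$ performed in Theorem \ref{diff ine general d}, except that now the observable $|x|^n$ is a function of position alone.

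Writing the Vlasov equation in conservative form, multiplying by $|x|^n$, and integrating by parts, two of the three force terms vanish immediately: the self-consistent force $\nabla_x K\star\rho_f$ and the magnetic drift $v\wedge B$ both enter through a $v$-divergence acting on $|x|^n$, and $\nabla_v |x|^n=0$. This is in fact a cleaner cancellation than the one used for $M_n$ in the proof of Theorem \ref{diff ine general d}, since it does not need the orthogonality $v\cdot (v\wedge B)=0$. Only the free-transport contribution survives, and integrating by parts in $x$ leads to
\[
\frac{d}{dt}N_n(t) \;=\; n\int_{\mathbb{R}^6} |x|^{n-2}\, x\cdot v\, f(t,x,v)\,dx\,dv.
\]

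From here, the pointwise bound $|x|^{n-2}x\cdot v \le |x|^{n-1}|v|$ combined with Hölder's inequality in $f\,dx\,dv$ with conjugate exponents $\bigl(\tfrac{n}{n-1},n\bigr)$ gives
\[
\frac{d}{dt}N_n(t)\;\le\; n\,N_n^{(n-1)/n}(t)\,M_n^{1/n}(t).
\]
By Theorem \ref{3d estZn}, on the interval $[0,T_\ast]$ we have $M_n^{1/n}(t)\le \overline{M}_n$ for the constant $\overline{M}_n$ stated in the corollary. Substituting, then rewriting the inequality as $\tfrac{d}{dt}N_n^{1/n}\le \overline{M}_n$, and integrating from $0$ to $t$ yields $N_n^{1/n}(t)\le N_n^{1/n}(0)+\overline{M}_n t$, which is the claimed estimate after raising to the $n$-th power.

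There is no genuine obstacle here: each step is soft. The only points that warrant a brief check are that the formal integration by parts is licit, which is ensured by the fact that $f(t,\cdot,\cdot)$ is a classical solution with finite $M_n$ and $N_n$ so the boundary terms at infinity vanish, and that $N_n(t)>0$ so the passage to $N_n^{1/n}$ is harmless (if $N_n(t_0)=0$ at some time, the inequality is trivial at $t_0$ and one may restart from the next time). Beyond these mild technicalities, the corollary is an immediate consequence of Theorem \ref{3d estZn} together with the kinematic identity above.
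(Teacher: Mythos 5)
Your proposal is correct and follows essentially the same route as the paper: derive $\frac{d}{dt}N_n = n\int |x|^{n-2}x\cdot v\,f$, bound it by $n N_n^{(n-1)/n}M_n^{1/n}$ via H\"older, invoke Theorem \ref{3d estZn} to control $M_n^{1/n}\le\overline{M}_n$ on $[0,T_\ast]$, and integrate the resulting inequality for $N_n^{1/n}$. The only tiny difference is that you emphasize $\nabla_v|x|^n=0$ as the source of the cancellation, whereas the paper records it via $\mathrm{div}_v(v\wedge B)=0$; both facts are needed and both arguments are fine.
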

\subsubsection{Long time propagation of moments of order $4$}\label{propagation long 4}
In variance with the 2D case, to establish propagation of moments globally, more care is needed. The key additional component is to first propagate the Eulerian moments which we recall are defined by 
\begin{align}
L_{n}(t)= \int_{\mathbb{R}^{6}}\left\vert x-tv\right\vert^{n}f(t,x,v)\ dxdv. \label{Eulerian moments}      
\end{align}
This approach is inspired by \cite{lafleche2021global}, which in turn draws from the methods of \cite{bardos1985global}
and \cite{castella1999propagation}. To achieve this necessitates imposing additional decay assumptions \eqref{eq:B} on the magnetic field $B$ which are redundant in the 2D case. These assumptions are responsible for creating the desired dispersion inequality. 
In addition it is important to note the potential corresponds to taking the screened Coulomb potential $K(x)=\pm \frac{e^{-\kappa\left\vert x\right\vert }}{\left\vert x\right\vert}$. 
\begin{theorem} 
Let $d=3$, $n=4$, $T>0$ and let $K(x)=\pm \frac{e^{-\kappa\left\vert x\right\vert }}{\left\vert x\right\vert}$. Let the assumption of Theorem \ref{main thm intro} hold. 
Then, there is some continuous in time function $\Phi(t)$ such that for all $t\in [0,T]$ it holds that 
\begin{align*}
L_{4}(t)\leq \Phi_{}(t).     
\end{align*}
\label{propagation of Eulerian moments}
\end{theorem}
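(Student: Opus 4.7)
\textbf{Evolution of $L_{4}$.} The plan is to derive a differential inequality for $L_{4}(t)$ whose right-hand side is integrable in time and sub-linear in $L_{4}$. Differentiating $L_{4}$, inserting \eqref{MAGNETIC Vlasov Intro}, and integrating by parts in $x$ and $v$, the identities $\partial_{t}|x-tv|^{4}=-4|x-tv|^{2}(x-tv)\cdot v$, $\nabla_{v}|x-tv|^{4}=-4t|x-tv|^{2}(x-tv)$, $\operatorname{div}_{v}(v\wedge B)=0$ and $v\cdot(v\wedge B)=0$ imply that the free-streaming contributions cancel exactly, leaving
\begin{equation*}
\frac{d}{dt}L_{4}(t)=4t\int_{\mathbb{R}^{6}}|x-tv|^{2}(x-tv)\cdot\nabla_{x}K\star\rho_{f}\,f\,dxdv-4t\int_{\mathbb{R}^{6}}|x-tv|^{2}\,v\cdot\mathbf{b}(t,x)\,f\,dxdv,
\end{equation*}
where in the magnetic contribution one uses $(x-tv)\cdot(v\wedge B)=x\cdot(v\wedge B)=-v\cdot\mathbf{b}(t,x)$.

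\textbf{Estimating the two contributions.} For the magnetic term I would invoke \eqref{eq:B} to extract $B_{0}t^{-a}$, Cauchy--Schwarz to get $\int|x-tv|^{2}|v|f\,dxdv\leq L_{4}^{1/2}M_{2}^{1/2}$, and the moment interpolation $M_{2}\leq M_{0}^{1/2}M_{4}^{1/2}$; since $a\in(1,\tfrac{8}{7})$ the weight $t^{1-a}$ is locally integrable. For the electric term I introduce the partial Eulerian moment $\sigma_{k}(t,x)\coloneqq\int_{\mathbb{R}^{3}}|x-tv|^{k}f(t,x,v)\,dv$ and rewrite the integral as $\int|\nabla_{x}K\star\rho_{f}|\,\sigma_{3}\,dx$. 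Hölder in $x$ with conjugate exponents $(7,7/6)$ combined with the weak Young inequality (Theorem \ref{weak Young}) applied to $\nabla_{x}K\in L^{3/2,\infty}$ reduces matters to estimating $\left\Vert \rho_{f}\right\Vert_{21/10}\left\Vert \sigma_{3}\right\Vert_{7/6}$. The second factor is controlled by an Eulerian analog of Lemma \ref{Magnetic-kinetic-interpolation} obtained via the change of variables $w=x-tv$: setting $\tilde f(t,x,w)\coloneqq t^{-3}f(t,x,(x-w)/t)$, one has $\iint|w|^{n}\tilde f\,dwdx=L_{n}(t)$ and $\left\Vert \tilde f\right\Vert_{\infty}=t^{-3}\left\Vert f^{\mathrm{in}}\right\Vert_{\infty}$, so Lemma \ref{Magnetic-kinetic-interpolation} applied to $\tilde f$ yields
\begin{equation*}
\left\Vert \sigma_{k}(t,\cdot)\right\Vert_{L^{p_{n,k}}_{x}}\leq C\,L_{n}(t)^{1-\theta_{n,k}}\,t^{-3\theta_{n,k}}\,\left\Vert f^{\mathrm{in}}\right\Vert_{\infty}^{\theta_{n,k}}.
\end{equation*}
Specializing to $(n,k)=(4,3)$ gives $p_{4,3}=7/6$, $\theta_{4,3}=1/7$, hence $\left\Vert \sigma_{3}\right\Vert_{7/6}\lesssim L_{4}^{6/7}t^{-3/7}$; the factor $\left\Vert \rho_{f}\right\Vert_{21/10}$ is obtained by Lebesgue interpolation between $\left\Vert \rho_{f}\right\Vert_{1}=\left\Vert f^{\mathrm{in}}\right\Vert_{1}$ and $\left\Vert \rho_{f}\right\Vert_{7/3}\leq C\,M_{4}(t)^{3/7}\left\Vert f^{\mathrm{in}}\right\Vert_{\infty}^{4/7}$ (Lemma \ref{Magnetic-kinetic-interpolation} with $k=0$, $n=4$).

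\textbf{Closing the inequality and main obstacle.} Assembling the two estimates produces the closed evolution inequality
\begin{equation*}
\dot L_{4}(t)\leq C_{B}\,t^{1-a}\,L_{4}(t)^{1/2}\,M_{4}(t)^{1/4}+C\,t^{4/7}\,M_{4}(t)^{11/28}\,L_{4}(t)^{6/7},
\end{equation*}
with both time weights locally integrable and both powers of $L_{4}$ strictly sub-linear. On the initial interval $[0,T_{\ast}]$, Theorem \ref{3d estZn} provides $M_{4}(t)\leq 2^{7}M_{4}(0)$, while Corollary \ref{corollary short time} together with the elementary bound $|x-tv|^{4}\leq 8(|x|^{4}+t^{4}|v|^{4})$ furnishes an initial value $L_{4}(T_{\ast})\leq \Phi(T_{\ast})$ for the continuation argument; beyond $T_{\ast}$, the smallness assumption \eqref{smallness assumption} keeps the prefactors small enough for separation of variables (or equivalently a sub-linear Gronwall lemma) to produce a polynomial-type $\Phi(t)$ valid on all of $[0,T]$. \emph{The delicate point} is balancing the singular $t^{-3\theta_{n,k}}$ from Eulerian interpolation against the factor of $t$ in front of the force integral: estimating $\rho_{f}$ through its own Eulerian interpolation would produce a non-integrable $t^{-1}$ singularity and a super-linear power $L_{4}^{5/4}$; avoiding this forces one to estimate $\rho_{f}$ through $M_{4}$ and therefore to bootstrap $M_{4}$ globally in parallel with $L_{4}$ -- a step that exploits both the decay assumption \eqref{eq:B} and the stronger integrability of the screened kernel $\nabla_{x}K\in L^{q}$ for every $q<3/2$ (unavailable in the Coulombic 3D case) to prevent the Riccati-type blow-up inherent in the short-time bound of Theorem \ref{3d estZn}.
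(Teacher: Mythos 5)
Your computation of $\tfrac{d}{dt}L_{4}$, your cross-product identity $(x-tv)\cdot(v\wedge B)=-v\cdot\mathbf{b}$, and your Eulerian interpolation lemma $\left\Vert\sigma_{k}\right\Vert_{p_{n,k}}\lesssim L_{n}^{1-\theta_{n,k}}t^{-3\theta_{n,k}}$ are all correct and match the paper's Step~1 and the change of variables $v\mapsto v+x/t$ in Step~3. The gap is in how you close the inequality. You estimate $\left\Vert\rho_f\right\Vert_{21/10}$ via Lemma~\ref{Magnetic-kinetic-interpolation} applied to $f$ itself, which brings in $M_4(t)^{11/28}$; similarly your Cauchy--Schwarz on the magnetic term introduces $M_2^{1/2}$. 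But global control of $M_4$ is precisely what is ultimately to be established (via Corollary~\ref{conclusion of propagatiom of eulerian}, which itself needs the present theorem), so your differential inequality $\dot L_4\leq C_B t^{1-a}L_4^{1/2}M_4^{1/4}+Ct^{4/7}M_4^{11/28}L_4^{6/7}$ is not closed. You acknowledge this (``bootstrap $M_4$ globally in parallel with $L_4$'') but never carry it out, and there is no reason a priori that the smallness assumption \eqref{smallness assumption}, which is calibrated in the paper to a self-contained Riccati ODE for $\ell_n=1+L_n$, would close a three-quantity bootstrap with different exponents.

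The diagnosis in your final paragraph also misidentifies the paper's resolution. You assert that estimating $\rho_f$ via the Eulerian (shifted) interpolation would produce a non-integrable $t^{-1}$ weight and a super-linear power $L_4^{5/4}$, and that one must therefore avoid it. The paper does the opposite: it \emph{does} estimate $\rho_f$ through the shifted kinetic interpolation, getting $\left\Vert\rho_f\right\Vert_q\lesssim\|f^{\mathrm{in}}\|_1^{1-p_n'/q'}\bigl((t^{-n}L_n)^{1-\theta_n}\|f^{\mathrm{in}}\|_\infty^{\theta_n}\bigr)^{p_n'/q'}$, so that $\mathcal{L}_1$ closes in $L_n$ alone as $\mathcal{C}\,t^{-a}L_n^{1+a/n}$. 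The super-linear power is accepted; what saves the argument is that the strong integrability $\nabla_x K\in L^c$, $c\in(\tfrac{7}{5},\tfrac{3}{2})$ (available only because $\kappa>0$), yields $a=\tfrac{3}{c}-1>1$, so $t^{-a}$ is integrable at $+\infty$, and the smallness assumption on $\ell_n(T_\ast)$ then prevents the Riccati-type blow-up. Your computation instead uses only the weak estimate $\nabla_x K\in L^{3/2,\infty}$ (Theorem~\ref{weak Young}), which is equally available for the unscreened Coulomb kernel; you claim at the end to ``exploit'' the stronger $L^q$, $q<3/2$, integrability of the screened kernel, but it never enters your bounds. So the two structural ingredients of the paper's proof -- closing the ODE in $L_n$ alone, and using $\nabla_x K\in L^{c}$ with $c<3/2$ to obtain an integrable time weight $t^{-a}$ -- are both missing from your argument.
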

Let us first explain how global propagation of $M_{4}(t)$ follows from the global propagation of $L_{4}(t)$ as stated in Theorem \ref{propagation of Eulerian moments} together with the short time estimate stated in Theorem \ref{3d estZn}.  
\begin{corollary}
Let the assumptions of Theorem \ref{propagation of Eulerian moments} is a continuous in time function $\Phi(t)=\Phi(t,\left\Vert f^{\mathrm{in}}\right\Vert_{\infty},\mathbf{N}^{\mathrm{in}}_{4},\mathbf{M}^{\mathrm{in}}_{4})$ such that 
\begin{align*}
N_{4}(t)\leq \Phi(t),\ M_{4}(t)\leq \Phi(t)\ \mbox{for all}\ t\in[0,T].   
\end{align*}
\label{conclusion of propagatiom of eulerian}
\end{corollary}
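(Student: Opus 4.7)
My plan is to leverage the global bound on $L_{4}(t)$ from Theorem \ref{propagation of Eulerian moments} together with a dispersion-type estimate to close a Gronwall argument on $N_{4}(t)$, from which propagation of $M_{4}(t)$ follows immediately. The apparent circularity between $N_{4}$ and $M_{4}$ is broken precisely by having $L_{4}$ in hand.

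\textbf{Step 1 (Short time).} On the interval $[0,T_{\ast}]$ I would simply invoke Theorem \ref{3d estZn} and Corollary \ref{corollary short time} to conclude that both $M_{4}(t)$ and $N_{4}(t)$ are bounded by an explicit continuous function of the initial data and $t$.

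\textbf{Step 2 (Dispersion inequality).} The key observation is the pointwise identity $tv = (tv-x) + x$. Applying the elementary inequality $(a+b)^{4}\le 2^{3}(a^{4}+b^{4})$ and integrating against $f$ yields
\begin{align*}
t^{4}M_{4}(t) = \int_{\mathbb{R}^{6}}|tv|^{4} f(t,x,v)\,dxdv \le 8\bigl(L_{4}(t) + N_{4}(t)\bigr), \qquad t>0.
\end{align*}
In particular, for $t\ge T_{\ast}$ we obtain the \emph{a priori} control $M_{4}(t) \le \frac{8}{T_{\ast}^{4}}\bigl(L_{4}(t)+N_{4}(t)\bigr)$, so that controlling $M_{4}$ on $[T_{\ast},T]$ reduces to controlling $N_{4}$ there.

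\textbf{Step 3 (Closing Gronwall on $N_{4}$).} The computation performed in the proof of Corollary \ref{corollary short time} is valid on $[0,T]$ and not only on $[0,T_{\ast}]$, since integration by parts and the orthogonality $v\cdot(v\wedge B)=0$ used there are global. It gives
\begin{align*}
\tfrac{d}{dt}N_{4}(t) = 4\int_{\mathbb{R}^{6}}|x|^{2}\,x\cdot v\, f(t,x,v)\,dxdv \le 4\,N_{4}^{3/4}(t)\,M_{4}^{1/4}(t)
\end{align*}
by H\"older's inequality. Substituting the dispersion estimate from Step~2 and using $(a+b)^{1/4}\le a^{1/4}+b^{1/4}$, I obtain on $[T_{\ast},T]$
\begin{align*}
\tfrac{d}{dt}N_{4}(t)\le \frac{C}{T_{\ast}}\,N_{4}^{3/4}(t)\bigl(L_{4}^{1/4}(t)+N_{4}^{1/4}(t)\bigr),
\end{align*}
and Young's inequality ($p=4,\,q=4/3$) on the cross term $L_{4}^{1/4}N_{4}^{3/4}\le\tfrac{1}{4}L_{4}+\tfrac{3}{4}N_{4}$ yields a linear differential inequality
\begin{align*}
\tfrac{d}{dt}N_{4}(t)\le C'\bigl(L_{4}(t)+N_{4}(t)\bigr).
\end{align*}
Since $L_{4}(t)\le \Phi_{L}(t)$ is already continuous in $t$ by Theorem \ref{propagation of Eulerian moments}, Gronwall's lemma combined with the initial value $N_{4}(T_{\ast})$ controlled by Step~1 produces a continuous-in-time bound $N_{4}(t)\le \widetilde\Phi(t)$ on $[T_{\ast},T]$.

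\textbf{Step 4 (Conclusion).} Plugging this back into the dispersion inequality from Step~2 delivers $M_{4}(t)\le \tfrac{8}{T_{\ast}^{4}}(\Phi_{L}(t)+\widetilde\Phi(t))$ on $[T_{\ast},T]$, and gluing with Step~1 yields a single continuous function $\Phi(t)$ majorizing both $M_{4}$ and $N_{4}$ on $[0,T]$. There is no real obstacle here beyond the clean bookkeeping of Step~3; the substantive work has already been done in proving the global bound on $L_{4}$, and the mild decay $1/T_{\ast}^{4}$ from the dispersion is harmless because $T_{\ast}$ is a fixed positive time.
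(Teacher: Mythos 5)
Your proof is correct and follows essentially the same route as the paper's: short-time control on $[0,T_\ast]$ from Theorem~\ref{3d estZn} and Corollary~\ref{corollary short time}, the dispersion inequality $t^{n}M_{n}\le 2^{n-1}(L_{n}+N_{n})$ to exchange $M_4$ for $L_4+N_4$ on $[T_\ast,T]$, a linear Gr\"onwall inequality for $N_4$ driven by the global bound on $L_4$, and then back to $M_4$ through the same dispersion bound. The only cosmetic difference is where the dispersion trick enters: the paper rewrites $x\cdot v=\tfrac{1}{t}\bigl(x\cdot(tv-x)+|x|^{2}\bigr)$ directly inside the integrand of $\tfrac{d}{dt}N_4$ and applies Young's inequality there, whereas you first H\"older your way to $\tfrac{d}{dt}N_4\le 4N_4^{3/4}M_4^{1/4}$ and then substitute the dispersion bound on $M_4$; both pipelines land on the same linear differential inequality $\tfrac{d}{dt}N_4\lesssim N_4+L_4$.
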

\begin{proof}
In the following calculation we always take $n=4$. We claim that $N_{n}(t)$ is globally propagated, from which global propagation of $M_{n}(t)$ would follow directly. We compute the time derivative of $N_{n}(t)$. Integrating by parts we see that    
\begin{align}
\frac{d}{dt}N_{n}(t)&=-\int_{\mathbb{R}^{6}}\left\vert x\right\vert^{n}\left(v\cdot\nabla_{x}f(t,x,v)-(v\wedge B(t,x)+\nabla_{x}K\star\rho_{f}(t,x))\cdot\nabla_{v}f(t,x,v)\right)\ dxdv \notag\\
&=-\int_{\mathbb{R}^{6}}\left\vert x\right\vert^{n}v\cdot \nabla_{x}f(t,x,v)\ dxdv
\notag\\&=n\int_{\mathbb{R}^{6}}\left\vert x\right\vert^{n-2}x\cdot v f(t,x,v)\ dxdv \notag\\ &=\frac{n}{t}\int_{\mathbb{R}^{6}}\left\vert x\right\vert^{n-2}x\cdot (tv-x)f(t,x,v)\ dxdv+\frac{n}{t}\int_{\mathbb{R}^{6}}\left\vert x\right\vert^{n}f(t,x,v)\ dxdv. \label{time derivative of N}      
\end{align}
By Corollary \ref{corollary short time} there is some $T_{\ast}>0$ such that
\begin{align}
 N_{n}(t)\leq (N_{n}^{\frac{1}{n}}(0)+ \overline{M}_{n}t)^{n}\ \mbox{for all $t\in [0,T_{\ast}]$}.  \label{Nn est}   
\end{align}
For all $t\geq T_{\ast}$, the first term in the right-hand side of  \eqref{time derivative of N} is bounded by  
\begin{align}
\frac{1}{t}\left((n-1)\int_{\mathbb{R}^{6}}\left\vert x\right\vert^{n}f
(t,x,v)\ dxdv+\int_{\mathbb{R}^{6}}\left\vert x-tv\right\vert^{n}f(t,x,v)\ dxdv \right)\leq \frac{n}{T_{\ast}}\left(N_{n}(t) +{L}_{n}(t)\right). \label{firstterm Nn}    
\end{align}
In addition, by Theorem \ref{propagation of Eulerian moments} there is some $\Phi(t)=\Phi(t,\left\Vert f^{\mathrm{in}}\right\Vert_{\infty},\mathbf{M}_{n}^{\mathrm{in}} )$ such that 
\begin{align*}
L_{n}(t)\leq \Phi(t).    
\end{align*}
Hence, it follows from \eqref{time derivative of N} and \eqref{firstterm Nn}  that for all $t\geq T_{\ast}$ it holds that  \begin{align*}
\frac{d}{dt}N_{n}(t)\leq \frac{2n}{T_{\ast}}\left(N_{n}(t)+L_{n}(t)\right)\leq \frac{2n}{T_{\ast}}(N_{n}(t)+\Phi(t)),\end{align*}
and hence 
\begin{align*}
N_{n}(t)\leq N_{n}(T_{\ast})+\frac{2n}{T_{\ast}}\int_{T_{\ast}}^{t}N_{n}(\tau)+\Phi(\tau)\ d\tau.    
\end{align*}
By Gr\"onwall's lemma it follows that for all $t\in [T_{\ast},T]$ it holds that  
\begin{align*}
N_{n}(t)\leq \Psi(t)\ \mbox{for some continuous in time}\  \Psi(t).
\end{align*}
Together with \eqref{Nn est}, this proves that $N_{n}(t)$ propagates on $[0,T]$. The estimate for $M_{n}(t)$ now follows by observing that for all $t\geq T_{\ast}$
\begin{align*}
M_{n}(t)\leq \frac{2^{n-1}}{t^{n}}\left(L_{n}(t)+N_{n}(t)\right)\leq \frac{2^{n-1}}{T_{\ast}^{n}}\left(L_{n}(t)+N_{n}(t)\right).     
\end{align*}
That $M_{n}(t)$ is propagated for $t\in [0,T_{\ast})$ follows again from Theorem \ref{3d estZn}.  
\end{proof}
\textit{Proof of Theorem \ref{propagation of Eulerian moments}}. In the following calculation we always take $n=4$. The places in which we use the specific numerical value of $n$ are indicated. \\
\textbf{Step 1}. (Calculation of $\frac{d}{dt}L_{n}(t)$). We compute  $\frac{d}{dt}L_{n}(t)$.  
We have 
\begin{align}
\frac{d}{dt}L_{n}(t)&=\frac{d}{dt}\int_{\mathbb{R}^{6}}\left\vert x-tv\right\vert^{n} f(t,x,v)\ dxdv \notag\\
&=\int_{\mathbb{R}^{6}}\left\vert x-tv\right\vert^{n}\partial_{t}f(t,x,v)\ dxdv \notag\\
&-n\int_{\mathbb{\mathbb{R}}^{6}}\left\vert x-tv\right\vert^{n-2} (x-tv)\cdot vf(t,x,v)\ dxdv \notag\\
&=-\int_{\mathbb{R}^{6}}\left\vert x-tv\right\vert^{n}v\cdot \nabla_{x}f(t,x,v)\ dxdv \notag\\
&+\int_{\mathbb{R}^{6}}\left\vert x-tv\right\vert^{n} (\nabla_x K\star \rho_{f}(t,x)+v\wedge B(t,x))\nabla_{v}f(t,x,v)\ dxdv \notag\\
&-n\int_{\mathbb{R}^{6}}\left\vert x-tv\right\vert^{n-2}(x-tv)\cdot vf(t,x,v)\ dxdv. \label{1st calculation of Ln}
\end{align}
Integration by parts shows that 
\begin{align}
\int_{\mathbb{R}^{6}}\left\vert x-tv\right\vert^{n} v\wedge B(t,x)\nabla_{v}f(t,x,v)\ dxdv=& 
-\int_{\mathbb{R}^{6}}\mathrm{div}_{v}\left(\left\vert x-tv\right\vert^{n} v\wedge B(t,x)\right)f(t,x,v)\ dxdv \notag\\
=&-\int_{\mathbb{R}^{6}}\left\vert x-tv\right\vert^{n} \mathrm{div}_{v}(v\wedge B(t,x))f(t,x,v)\ dxdv \notag \\
&+nt\int_{\mathbb{R}^{6}}\left\vert x-tv\right\vert^{n-2}(x-tv)v\wedge B(t,x)f(t,x,v)\ dxdv \notag\\
=&    nt\int_{\mathbb{R}^{6}}\left\vert x-tv\right\vert^{n-2}(x-tv)v\wedge B(t,x)f(t,x,v)\ dxdv,  \label{1}
\end{align}
where we used that $\mathrm{div}_{v}(v\wedge B)=0$ in the last equation. Note that this is the point where the magnetic field contributes a non-trivial term. 
Furthermore, we have  
\begin{align}
\int_{\mathbb{R}^{6}}&\left\vert x-tv\right\vert^{n}\nabla_x K\star \rho_{f}(t,x)\nabla_{v}f(t,x,v)\ dxdv \notag \\
&=nt\int_{\mathbb{R}^{6}}\left\vert x-tv\right\vert^{n-2}\nabla_x K\star \rho_{f}(t,x)\cdot(x-tv)f(t,x,v)\ dxdv.  {\label{2}}
\end{align}
Also, note that 
\begin{align}
-\int_{\mathbb{R}^{6}}\left\vert x-tv\right\vert^{n}v\cdot \nabla_{x}f(t,x,v)\ dxdv=n\int_{\mathbb{R}^{6}}\left\vert x-tv\right\vert^{n-2}(x-tv)\cdot vf(t,x,v)\ dxdv.  
\label{3}
\end{align}
Substituting \eqref{1}-\eqref{3} inside \eqref{1st calculation of Ln} we infer that
\begin{align*}
\frac{d}{dt}L_{n}(t)=&nt\int_{\mathbb{R}^{6}}\left\vert x-tv\right\vert^{n-2}\nabla_x K\star \rho_{f}(t,x)\cdot(x-tv)f(t,x,v)\ dxdv\\
&+nt\int_{\mathbb{R}^{6}}\left\vert x-tv\right\vert^{n-2}(x-tv)v\wedge B(t,x)f(t,x,v)\ dxdv\coloneqq \mathcal{L}_{1}+\mathcal{L}_{2}.   
\end{align*}
We proceed by estimating $\mathcal{L}_{1},\mathcal{L}_{2}$ separately.\\ 
\textbf{Step 2}. (Estimate on $\mathcal{L}_{2}$ ). Note that 
\begin{align*}
tv\wedge B(t,x)&=(tv-x)\wedge B(t,x)+x\wedge B(t,x)=(tv-x)\wedge B(t,x)+\mathbf{b}(t,x).
\end{align*}
So we can write  
\begin{align}
\mathcal{L}_{2}=&-n\int_{\mathbb{R}^{6}}\left\vert x-tv\right\vert^{n-2}(x-tv)\cdot(x-tv)\wedge B(t,x)f(t,x,v)\ dxdv \notag\\
&+n\int_{\mathbb{R}^{6}}\left\vert x-tv\right\vert^{n-2}\mathbf{b}(t,x)\cdot(x-tv)f(t,x,v)\ dxdv.      \label{L2 splittng}  
\end{align}
The first term in \eqref{L2 splittng} is bounded by 
\begin{align}
n\left\Vert B(t,\cdot)\right\Vert_{\infty}\int_{\mathbb{R}^{6}}\left\vert x-tv\right\vert^{n}f(t,x,v)\ dxdv=n\left\Vert B(t,\cdot)\right\Vert_{\infty}L_{n}(t). \label{first term L2}       
\end{align}
By means of H\"older's inequality the second term in \eqref{L2 splittng} is bounded by 
\begin{align}
&n\left\Vert \mathbf{b}(t,\cdot)\right\Vert_{\infty} \left(\int_{\mathbb{R}^{6}}\left\vert x-tv\right\vert^{n}f(t,x,v)\ dxdv\right)^{\frac{n-1}{n}}\left(\int_{\mathbb{R}^{6}}f(t,x,v)\ dxdv\right)^{\frac{1}{n}} \notag\\
&=n\left\Vert \mathbf{b}(t,\cdot)\right\Vert_{\infty}\left\Vert f^{\mathrm{in}}\right\Vert_{1}  L_{n}^{\frac{n-1}{n}}(t). \label{sec term L2}    
\end{align}
Thus, putting \eqref{first term L2} and \eqref{sec term L2} inside \eqref{L2 splittng} it follows that 
\begin{align}
\mathcal{L}_{2}\leq n\left\Vert B(t,\cdot)\right\Vert_{\infty}L_{n}(t)+n\left\Vert \mathbf{b}(t,\cdot)\right\Vert_{\infty}\left\Vert f^{\mathrm{in}}\right\Vert_{1}  L_{n}^{\frac{n-1}{n}}(t). \label{L2 est} \end{align}\\
\textbf{Step 3}. (Estimate on $\mathcal{L}_{1}$). 
We have 
\begin{align}
\mathcal{L}_{1}\leq& nt \left\Vert \int_{\mathbb{R}^{3}}\left\vert x-tv\right\vert^{n-1}f(t,x, v)\ d v\right\Vert_{L^p_{x}}\left\Vert \nabla_x K\star \rho_{f}(t,\cdot)\right\Vert_{p'} \notag \\
=&nt^{n}\left\Vert \int_{\mathbb{R}^{3}}\left\vert \frac{x}{t}-v\right\vert^{n-1}f(t,x, v)\ d v\right\Vert_{L^p_{x}}\left\Vert \nabla_x K\star \rho_{f}(t,\cdot)\right\Vert_{p'} \notag \\
=&nt^{n} \left\Vert \int_{\mathbb{R}^{3}}\left\vert  v\right\vert^{n-1}f(t,x, v+\frac{x}{t})\ d v\right\Vert_{L^{p}_{x}}\left\Vert \nabla_x K\star \rho_{f}(t,\cdot)\right\Vert_{p'}.  
\label{I2 INE}
\end{align}
Note that $K$ verifies $\nabla_x K\in L^{c}(\mathbb{R}^{3})$ for all $c\in [1,\frac{3}{2})$. Choosing $q$ such that 
\begin{align}
1+\frac{1}{p'}=\frac{1}{c}+\frac{1}{q}\, \mbox{or equivalently}\ \frac{1}{p'}+\frac{1}{q'}=\frac{1}{c} \label{conjugation relation}   
\end{align}
we have that 
\begin{align}
\left\Vert \nabla_x K\star \rho_{f}(t,\cdot)\right\Vert_{p'}\leq \left\Vert \nabla_x K\right\Vert_{c} \left\Vert \rho_{f}(t,\cdot)\right\Vert_{q}.
\label{weak Young inequality}
\end{align}
Since $n=4$ it follows that  $\frac{n+3}{n+1}=\frac{7}{5}<\frac{3}{2}$ and we may pick some $c\in (\frac{7}{5},\frac{3}{2})$. 
Note that 
$p_{n,n-1}=(n+3)'=\frac{n+3}{n+2}$ and $p_{n}=\left(\frac{n+3}{n}\right)'=\frac{n+3}{3}$ and so 
\begin{align*}
\frac{1}{p_{n,n-1}'}+\frac{1}{p_{n}'}=\frac{1}{n+3}+\frac{n}{n+3}=\frac{n+1}{n+3}=\frac{5}{7}\geq \frac{1}{c}.    
\end{align*}
Consequently we can choose $p\leq p_{n,n-1}$ and  $1<q\leq p_{n}$ such that \eqref{conjugation relation} holds. We have by interpolation
\begin{align}
\left\Vert \rho_{f}(t,\cdot)\right\Vert_{q}\leq \left\Vert \rho_{f}(t,\cdot)\right\Vert_{1}^{1-\frac{p_{n}'}{q'}} \left\Vert \rho_{f}(t,\cdot)\right\Vert_{p_{n}}^{\frac{p_{n}'}{q'}}=\left\Vert f^{\mathrm{in}}\right\Vert_{1}^{1-\frac{p'_{n}}{q'}}\left\Vert \rho_{f}(t,\cdot)\right\Vert_{p_{n}}^{\frac{p_{n}'}{q'}}  . \label{density bound} 
\end{align}
Therefore, inserting \eqref{weak Young inequality} and \eqref{density bound} inside \eqref{I2 INE}  we get (recall that $a=\frac{3}{c}-1>1$ )
\begin{align}
\mathcal{L}_{1}&\leq nt^{n}  \left\Vert \nabla_x K\right\Vert_{c}  \left\Vert \int_{\mathbb{R}^{3}}f(t,x, v+\frac{x}{t})\left\vert  v\right\vert^{n-1}\ d v\right\Vert_{L^{p}_{x}}\left\Vert f^{\mathrm{in}}\right\Vert_{1}^{1-\frac{p'_{n}}{q'}}\left\Vert \rho_{f}(t,\cdot)\right\Vert_{p_{n}}^{\frac{p_{n}'}{q'}} \notag\\
 &\leq C\left\Vert \nabla_x K\right\Vert_{c} t^{n}\left\Vert f(t,\cdot)\right\Vert_{\infty}^{\frac{1}{c}}\left\Vert f^{\mathrm{in}}\right\Vert_{1}^{1-\frac{p'_{n}}{q'}}\left(\int_{\mathbb{R}^{6}}f(t,x, v+\frac{x}{t})\left\vert  v\right\vert^{n}dxd v \right)^{1+\frac{a}{n}} \notag\\
 &= C\left\Vert \nabla_x K\right\Vert_{c} t^{n}\left\Vert f^{\mathrm{in}}\right\Vert_{\infty}^{\frac{1}{c}}\left\Vert f^{\mathrm{in}}\right\Vert_{1}^{1-\frac{p'_{n}}{q'}}\left(\int_{\mathbb{R}^{6}}f(t,x, v)\left\vert v-\frac{x}{t}\right\vert^{n}dxd v \right)^{1+\frac{a}{n}} \notag\\
 &=C\left\Vert \nabla_x K\right\Vert_{c} t^{-a}\left\Vert f^{\mathrm{in}}\right\Vert_{\infty}^{\frac{1}{c}}\left\Vert f^{\mathrm{in}}\right\Vert_{1}^{1-\frac{p'_{n}}{q'}}\left(\int_{\mathbb{R}^{6}}f(t,x, v)\left\vert tv-x\right\vert^{n}dxd v \right)^{1+\frac{a}{n}}
=\frac{\mathcal{C}L_{n}^{1+\frac{a}{n}}(t)}{t^{a}} \label{est L1}
\end{align}
where $\mathcal{C}=\mathcal{C}\left(c,\left\Vert f^{\mathrm{in}}\right\Vert_{\infty},\left\Vert f^{\mathrm{in}}\right\Vert_{1} \right)\coloneqq C\left\Vert \nabla_x K\right\Vert_{c} \left\Vert f^{\mathrm{in}} \right\Vert_{\infty}^{\frac{1}{c}}\left\Vert f^{\mathrm{in}}\right\Vert_{1}^{1-\frac{p'_{n}}{q'}}$.\\
\textbf{Step 4}.   (Conclusion) Gathering inequalities \eqref{L2 est} and \eqref{est L1} yields 
\begin{align*}
\frac{d}{dt}L_{n}(t)\leq    n\left\Vert B(t,\cdot)\right\Vert_{\infty}L_{n}(t)+n\left\Vert \mathbf{b}(t,\cdot)\right\Vert_{\infty}\left\Vert f^{\mathrm{in}}\right\Vert_{1}L_{n}^{\frac{n-1}{n}}(t)+\frac{\mathcal{C}L_{n}^{1+\frac{a}{n}}(t)}{t^{a}}.  \end{align*}
Putting $\ell_{n}(t)\coloneqq 1+L_{n}(t)$ the latter inequality entails 
\begin{align*}
\frac{d}{dt}\ell_{n}(t)\leq n\left\Vert B(t,\cdot)\right\Vert_{\infty}\ell_{n}(t)+n\left\Vert \mathbf{b}(t,\cdot)\right\Vert_{\infty}\left\Vert f^{\mathrm{in}}\right\Vert_{1}\ell_{n}^{\frac{n-1}{n}}(t)+\frac{\mathcal{C}\ell_{n}^{1+\frac{a}{n}}(t)}{t^{a}}.    
\end{align*}
By hypothesis \eqref{eq:B} we have 
\begin{align*}
\left\Vert B(t,\cdot)\right\Vert_{\infty}\leq \frac{B_{0}}{t^{a}}     
\end{align*}
and therefore the first term is bounded by  
\begin{align*}
\frac{B_{0}n}{t^{a}}\ell_{n}(t)\leq \frac{B_{0}n}{t^{a}}\ell_{n}^{1+\frac{a}{n}}(t).     
\end{align*}
By hypothesis \eqref{eq:B} we also have 
\begin{align*}
\left\Vert \mathbf{b}(t,\cdot)\right\Vert_{\infty}\leq \frac{B_{0}}{t^{a}}     
\end{align*}
and therefore the second term is bounded by 
\begin{align*}
\frac{B_{0}n\left\Vert f^{\mathrm{in}}\right\Vert_{1}}{t^{a}}\ell_{n}^{\frac{n-1}{n}}(t)\leq \frac{B_{0}n\left\Vert f^{\mathrm{in}}\right\Vert_{1} \ell_{n}^{1+\frac{a}{n}}(t)}{t^{a}}.     
\end{align*}
Thus we deduce the inequality
\begin{align*}
\frac{d}{dt}\ell_{n}(t)\leq \left(nB_{0}\left(1+\left\Vert f^{\mathrm{in}}\right\Vert_{1} \right)+\mathcal{C}\right)\frac{\ell_{n}^{1+\frac{a}{n}}(t)}{t^{a}}. 
\end{align*}
Dividing by $\ell_{n}^{1+\frac{a}{n}}(t)$ implies the inequality 
\begin{align*}
-\frac{d}{dt}\ell_{n}^{-\frac{a}{n}}(t)\leq  \frac{\mathcal{C}'}{t^{a}},
\end{align*}
where $ \mathcal{C}'\coloneqq \frac{a}{n}\left(nB_{0}(1+\left\Vert f^{\mathrm{in}}\right\Vert_{1})+\mathcal{C}\right)$. 
Integrating from $T_{\ast}$ to $t$ gives 
\begin{align*}
 \ell^{-\frac{a}{n}}_{n}(T_{\ast})-\ell^{-\frac{a}{n}}_{n}(t)\leq \mathcal{C}'\left(\frac{T_{\ast}^{-(a-1)}}{a-1}-\frac{t^{-(a-1)}}{a-1}\right).   
\end{align*}
Thus we conclude that 
\begin{align}
\ell^{-\frac{a}{n}}_{n}(t)\geq \ell^{-\frac{a}{n}}_{n}(T_{\ast})-\mathcal{C}'\left(\frac{T_{\ast}^{-(a-1)}}{a-1}-\frac{t^{-(a-1)}}{a-1}\right). \label{ln}    
\end{align}
So in order to get a bound for $\ell_{n}(t)$ we wish to ensure that 
\begin{align}
\ell_{n}^{-\frac{a}{n}}(T_{\ast})-\frac{\mathcal{C}'T_{\ast}^{-(a-1)}}{a-1}>0\ \mbox{or equivalently}\ T_{\ast}^{a+1}> \frac{\mathcal{C}'}{a-1}\ell_{n}^{\frac{a}{n}}(T_{\ast}). \label{wish to show}     
\end{align}
Note that we have 
\begin{align}
\ell_{n}^{\frac{a}{n}}(T_{\ast})&\leq \left(1+2^{n-1}T_{\ast}^{n}\int_{\mathbb{R}^{6}}\left\vert v\right\vert^{n}f(T_{\ast},x,v)\ dxdv+2^{n-1}\int_{\mathbb{R}^{6}}\left\vert x\right\vert^{n}f(T_{\ast},x,v)\ dxdv  \right)^{\frac{a}{n}} \notag\\
&\leq 2^{\frac{a(n-1)}{n}}\left(1+T_{\ast}^{a}M_{n}^{\frac{a}{n}}(T_{\ast})+N_{n}^{\frac{a}{n}}(T_{\ast})  \right). \label{estonLn}   
\end{align}
In view of \eqref{estonLn}, to fulfil \eqref{wish to show} it suffices to demand: 
\begin{itemize}
    \item [](a) \begin{align*}
T_{\ast}^{a+1}\geq \frac{2^{\frac{a(n-1)}{n}}3\mathcal{C}'}{a-1}T_{\ast}^{a}M_{n}^{\frac{a}{n}}(T_{\ast})     
\end{align*}
\item [] (b) 
\begin{align*}
T_{\ast}^{a+1}\geq \frac{2^{\frac{a(n-1)}{n}}3\mathcal{C}'}{a-1}N_{n}^{\frac{a}{n}}(T_{\ast}) \end{align*}
\item [] (c) 
\begin{align*}
 T_{\ast}^{a+1}\geq \frac{2^{\frac{a(n-1)}{n}}3\mathcal{C}'}{a-1}.    
\end{align*} 
\end{itemize}
By Theorem \ref{3d estZn} inequality (a) will be implied from 
\begin{align}
T_{\ast}\geq \frac{2^{\frac{7a(n-1)}{n}}3\mathcal{C}'}{a-1}M_{4}^\frac{a}{4}(0).  
\label{ineq1}
\end{align}
By Corollary \ref{corollary short time} inequality (b) will be implied by  
\begin{align}
T_{\ast}^{a+1}\geq \left(\frac{2^{\frac{a(n-1)}{n}}3\mathcal{C}'}{a-1}\right)(N_{4}^{\frac{1}{4}}(0)+2^{7}M_{4}(0)T_{\ast})^{a}. \label{ineq2} 
\end{align}
As for inequality (c), it will be implied by 
\begin{align}
T_{\ast}\geq \left(\frac{2^{\frac{a(n-1)}{n}}3\mathcal{C}'}{a-1}\right)^{\frac{1}{a+1}}. 
\label{ineq3}
\end{align}
Using the explicit formula for $T_{\ast}$ we see that  \eqref{ineq1}, \eqref{ineq2} and \eqref{ineq3} are exactly  the smallness assumption \eqref{smallness assumption}. 
\qed

\subsubsection{Long time estimate for moments of arbitrary order} We can now conclude the proof of Theorem \ref{main thm intro}ii., by verifying global propagation moments of order $\geq 5$. We first prove the result for $n=5$ and then induct on $n$. We have that 
\begin{align*}
\frac{d}{dt}M_{n}(t)=\frac{d}{dt}\int_{\mathbb{R}^{6}}\left\vert v\right\vert^{n} f(t,x,v)\ dxdv&=-n\int_{\mathbb{R}^{6}}\left\vert v\right\vert^{n-2}v\cdot \nabla_{x}K\star \rho_{f}(t,x)f(t,x,v)\ dxdv\\
&\leq n\int_{\mathbb{R}^{6}}\left\vert v\right\vert^{n-1}\left\vert \nabla_{x}K\star \rho_{f}\right\vert(t,x)f(t,x,v)\ dxdv.     
\end{align*}
For $n=5$ we obtain 
\begin{align}
\frac{d}{dt}M_{5}(t)\leq 5\int_{\mathbb{R}^{6}}\left\vert v\right\vert^{4}\left\vert \nabla_{x} K\star \rho_{f}(t,x)\right\vert f(t,x,v)\ dxdv\leq 5\left\Vert \rho_{4}(t,\cdot)\right\Vert_{\alpha}\left\Vert \nabla_{x} K\star \rho_{f}(t,\cdot)\right\Vert_{\alpha'}. \label{ine}       
\end{align}
Now if we choose $\alpha'=p'_{5,4}=5(1+\frac{3}{5})=8,\theta_{5,4}=\frac{1}{p'_{5,4}}=\frac{1}{8}$ then by Lemma \ref{Magnetic-kinetic-interpolation} we get 
\begin{align*}
\left\Vert \rho_{4}(t,\cdot)\right\Vert_{\alpha} =\left\Vert \rho_{4}(t,\cdot)\right\Vert_{\frac{8}{7}}\leq \frac{8}{7}\left(\int_{\mathbb{R}^{6}}\left\vert v\right\vert^{5}f(t,x,v)\ dxdv \right)^{\frac{7}{8}}\left\Vert f^{\mathrm{in}}\right\Vert_{\infty}^{\frac{1}{8}}=\frac{8}{7}M_{5}^{\frac{7}{8}}(t)\left\Vert f^{\mathrm{in}}\right\Vert_{\infty}^{\frac{1}{8}}  .      
\end{align*}
For this choice we also have 
\begin{align*}
\left\Vert \nabla_x K\star\rho_{f}(t,\cdot)\right\Vert_{\alpha'}=\left\Vert \nabla_x K\star \rho_{f}(t,\cdot)\right\Vert_{8}\leq \left\Vert \nabla_x K\right\Vert_{\frac{3}{2},\infty}\left\Vert \rho_{f}(t,\cdot)\right\Vert_{\frac{21}{10}}.        
\end{align*}
By Lemma \ref{Magnetic-kinetic-interpolation} there holds the inequality 
\begin{align*}
\left\Vert \rho_{f}(t,\cdot)\right\Vert_{\frac{21}{10}}\leq \frac{4}{3}M_{\frac{33}{10}}^{\frac{10}{21}}(t)\left\Vert f^{\mathrm{in}}\right\Vert_{\infty}^{\frac{11}{21}}.     \end{align*}
Therefore, thanks to Corollary  \ref{conclusion of propagatiom of eulerian} we deduce that 
\begin{align*}
\left\Vert \rho_{f}(t,\cdot)\right\Vert_{\frac{21}{10}}\leq \Phi(t)     
\end{align*}
for some $\Phi(t)=\Phi(t)=\Phi(t,\left\Vert f^{\mathrm{in}}\right\Vert_{\infty},\mathbf{N}^{\mathrm{in}}_{4},\mathbf{M}^{\mathrm{in}}_{4})$. Putting the last inequality inside \eqref{ine} we obtain \begin{align*}
\frac{d}{dt}M_{5}(t)\leq \Phi(t)M_{5}^{\frac{7}{8}}(t) \ \mbox{for all}\  t\in [0,T].    \end{align*}
Solving the above inequality shows that 
\begin{align*}
M_{5}(t)\leq \widetilde{\Phi}\left(t,\left\Vert f^{\mathrm{in}}\right\Vert_{\infty},\mathbf{M}_{5}^{\mathrm{in}},\mathbf{N}^{\mathrm{in}}_{5}\right),\end{align*}
for some continuous in time function $\widetilde{\Phi}$. 
We proceed by induction on $n$. We have 
\begin{align}
 \frac{d}{dt}M_{n}(t)\leq n\left\Vert \rho_{n-1}(t,\cdot)\right\Vert_{\alpha_{n}}\left\Vert \nabla_x K\star \rho_{f}(t,\cdot)\right\Vert_{\alpha_{n}'}. \label{prefinalMn}      
\end{align}
Now we choose $\alpha_{n}'=p'_{n,n-1}=n+3, \theta_{n,n-1}=\frac{1}{n+3}$ so that 
\begin{align}
\left\Vert \rho_{n-1}(t,\cdot)\right\Vert_{\alpha_{n}}\leq \frac{n+3}{n+2}\left(\int_{\mathbb{R}^{6}}\left\vert v\right\vert^{n}f(t,x,v)\ dxdv\right)^{\frac{n+2}{n+3}}\left\Vert f^{\mathrm{in}}\right\Vert_{\infty}^{\frac{1}{n+3}}=\frac{n+3}{n+2}M_{n}^{\frac{n+2}{n+3}}(t)\left\Vert f^{\mathrm{in}}\right\Vert_{\infty}^{\frac{1}{n+3}}. \label{rhon-1 est}        
\end{align}
In addition, we have 
\begin{align*}
\left\Vert \nabla_x K\star \rho_{f}(t,\cdot)\right\Vert_{\alpha_{n}'}=\left\Vert \nabla_x K\star \rho_{f}(t,\cdot)\right\Vert_{n+3}\leq \left\Vert \nabla_x K\right\Vert_{\frac{3}{2},\infty}\left\Vert \rho_{f}(t,\cdot)\right\Vert_{\frac{3(n+3)}{n+6}}.         
\end{align*}
We search for $k$ which will make Lemma \ref{Magnetic-kinetic-interpolation} applicable, i.e., $k$ such that 
\begin{align*}
\frac{3n+9}{2n+3}=1+\frac{3}{k} \ \mbox{or equivalently }  k=\frac{6n+9}{n+6}.    
\end{align*}
It follows that 
\begin{align}
 \left\Vert \rho_{f}(t,\cdot)\right\Vert_{\frac{3(n+3)}{n+6}}\leq \frac{4}{3}\left(\int_{\mathbb{R}^{6}}\left\vert v\right\vert^{k}f(t,x,v)\ dxdv \right)^{1-\theta_{k}}\left\Vert f^{\mathrm{in}}\right\Vert_{\infty}^{\theta_{n}}= \frac{4}{3}M_{k}^{1-\theta_{k}}(t)\left\Vert f^{\mathrm{in}}\right\Vert_{\infty}^{\theta_{n}}.  \label{estk}   \end{align}
Substituting \eqref{rhon-1 est} and \eqref{estk} inside \eqref{prefinalMn} we obtain 
\begin{align*}
\frac{d}{dt}M_{n}(t)\leq  \frac{4n(n+3)}{3(n+2)}M_{k}^{1-\theta_{k}}(t)M_{n}^{\frac{n+2}{n+3}}(t)\left\Vert f^{\mathrm{in}}\right\Vert_{\infty}^{\frac{1}{n+3}+\theta_{n}}.    
\end{align*}
Since $k<n-1$ whenever 
$n\geq 5$ and since $\frac{n+2}{n+3}<1$, the claim now follows by induction hypothesis and the last inequality. To show that $N_{n}(t)$ propagates globally we apply exactly the same argument demonstrated in Corollary \eqref{corollary short time}. 

\section{Propagation of regularity} \label{sec prop of regularity}
In this section we apply the propagation of moments in order to conclude propagation of regularity. The following corollary gives higher integrability of the density $\rho_{f}(t,\cdot)$, which will be subsequently used to prove boundedness of $\rho_{f}(t,\cdot)$. Recall that $ b $ is given by \eqref{definition of frakb} and that $ b '=\frac{ b }{ b -1}$. 
\begin{corollary}
\label{higher integrability of density}
Let the assumptions of Theorem \ref{main thm intro} hold and assume that $n>d( b '-1)$ where $\frac{1}{ b }+\frac{1}{ b '}=1$. Then $\rho_{f}\in L^{\infty}([0,T];L^{ b '}(\mathbb{R}^{d}))$. 
\end{corollary}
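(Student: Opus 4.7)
The plan is to combine the kinetic interpolation inequality from Lemma \ref{Magnetic-kinetic-interpolation} with the global moment bound from Theorem \ref{main thm intro} and the conservation of mass, then close the gap with a standard Lebesgue interpolation. Concretely, define $p_n \coloneqq \frac{n+d}{d}$ as in Lemma \ref{Magnetic-kinetic-interpolation}; the hypothesis $n > d(\mathfrak{b}'-1)$ is precisely the statement that $p_n > \mathfrak{b}'$. Since also $\|\rho_f(t,\cdot)\|_{L^1} = \|f^{\mathrm{in}}\|_{L^1}$ by Proposition \ref{conservation of Lp} (integrating in $v$ first), we will have $\mathfrak{b}'$ sitting strictly between the two exponents $1$ and $p_n$ at which $\rho_f(t,\cdot)$ is controlled, which is exactly what interpolation needs.

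Step 1 would be to apply Lemma \ref{Magnetic-kinetic-interpolation} with $k=0$ to obtain
\[
\|\rho_f(t,\cdot)\|_{L^{p_n}} \;\leq\; C_{d,0}\, M_n(t)^{1-\theta_n}\,\|f^{\mathrm{in}}\|_{L^\infty}^{\theta_n},
\]
where I use $\|f(t,\cdot)\|_{L^\infty} = \|f^{\mathrm{in}}\|_{L^\infty}$ from Proposition \ref{conservation of Lp}. Step 2 is to invoke Theorem \ref{main thm intro} to bound $M_n(t) \leq \Phi_n(t)$ on $[0,T]$; since $\Phi_n$ is continuous in $t$, the right-hand side above is uniformly bounded in $t\in[0,T]$, showing $\rho_f \in L^\infty([0,T];L^{p_n}(\mathbb{R}^d))$.

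Step 3 is the Lebesgue interpolation: pick $\lambda \in (0,1)$ so that
\[
\frac{1}{\mathfrak{b}'} \;=\; \frac{1-\lambda}{1} + \frac{\lambda}{p_n},
\]
which is solvable precisely because $1 < \mathfrak{b}' < p_n$. Then
\[
\|\rho_f(t,\cdot)\|_{L^{\mathfrak{b}'}} \;\leq\; \|\rho_f(t,\cdot)\|_{L^1}^{1-\lambda}\,\|\rho_f(t,\cdot)\|_{L^{p_n}}^{\lambda} \;=\; \|f^{\mathrm{in}}\|_{L^1}^{1-\lambda}\,\|\rho_f(t,\cdot)\|_{L^{p_n}}^{\lambda},
\]
and the two factors on the right-hand side are controlled uniformly on $[0,T]$ by Step 2 and mass conservation, concluding the argument.

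There is no real obstacle here — this is a direct corollary of the already-established moment propagation. The only things to verify carefully are that the arithmetic condition $n > d(\mathfrak{b}'-1)$ indeed converts to $p_n > \mathfrak{b}'$ (it does: $p_n - \mathfrak{b}' = \frac{n - d(\mathfrak{b}'-1)}{d}$) and that in 3D the smallness hypothesis \eqref{smallness assumption} of Theorem \ref{main thm intro} is being inherited implicitly through the phrase ``the assumptions of Theorem \ref{main thm intro} hold,'' so that moments are actually propagated globally in time up to $T$.
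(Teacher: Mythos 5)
Your argument is correct and is essentially identical to the paper's proof: apply Lemma~\ref{Magnetic-kinetic-interpolation} with $k=0$ to bound $\|\rho_f(t,\cdot)\|_{L^{p_n}}$ in terms of $M_n(t)$ and $\|f^{\mathrm{in}}\|_\infty$, note that $n>d(\mathfrak b'-1)$ means $p_n>\mathfrak b'$, interpolate between $L^1$ (mass conservation) and $L^{p_n}$, and invoke Theorem~\ref{main thm intro} for the uniform-in-time bound on $M_n$. Your remark that in 3D the smallness assumption \eqref{smallness assumption} is being quietly inherited through ``the assumptions of Theorem~\ref{main thm intro}'' is a correct and worthwhile clarification.
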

\begin{proof}
By Lemma \ref{Magnetic-kinetic-interpolation} we have 
\begin{align}
\left\Vert \rho_{f}(t,\cdot)\right\Vert_{p_{n}}\leq CM_{n}^{1-\theta_{n}}(t)\left\Vert f^{\mathrm{in}}\right\Vert_{\infty}^{\theta_{n}} \label{lemma reminder}
\end{align}
Since $n\geq d( b '-1)$ we see that $p_{n}\geq  b '$. 
Choosing $\lambda_{n}\in [0,1]$ such that 
\begin{align*}
\frac{1}{ b '}=\frac{\lambda_{n}}{p_{n}}+\frac{1-\lambda_{n}}{1}    
\end{align*}
we conclude by interpolation and \eqref{lemma reminder} that 
\begin{align*}
\left\Vert \rho_{f}(t,\cdot)\right\Vert_{ b '}\leq \left(CM_{n}^{(1-\theta_{n})}(t)\left\Vert f^{\mathrm{in}}\right\Vert_{\infty}^{\theta_{n}}\right)^{\lambda_{n}}\left\Vert f^{\mathrm{in}}\right\Vert_{1}^{1-\lambda_{n}}  .     
\end{align*}
The latter inequality together with Theorem \ref{main thm intro} imply the announced result.    
\end{proof}
\begin{theorem}
\label{thm_bound_rho}
Let the assumptions of Corollary \ref{higher integrability of density} hold. Then, it holds that $\rho_{f}\in L^{\infty}([0,T];L^{\infty}(\mathbb{R}^{d}))$ with the estimate 
\begin{align*}
\left\Vert \rho_{f}(t,\cdot)\right\Vert_{\infty}\leq  
c_{n}\exp\left(n\left\Vert \nabla_x K\right\Vert_{ b }\left\Vert \rho_{f}\right\Vert_{L^{\infty}_{t}L^{ b '}_{x}}t\right)\left\Vert (1+\left\vert  v\right\vert^{n}) f^{\mathrm{in}}(x, v)\right\Vert_{L^\infty_{x, v}}\mathbb{
}\ \mbox{for all}\ t\in[0,T]. \end{align*}
Consequently, there is a constant $\phi_{0}>0$ such that for all $p\in [1,\infty]$ it holds that 
\begin{align*}
\left\Vert \rho_{f}\right\Vert_{L^{\infty}_{t}L^{p}_{x}}\leq \phi_{0}.
\end{align*}
\end{theorem}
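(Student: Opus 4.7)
The plan is to derive the pointwise bound on $\rho_{f}$ via a characteristic argument that exploits the no-work property of the external magnetic field: although $B(t,x)$ bends the Vlasov trajectories, it contributes nothing to the kinetic energy, so $|V(t,x,v)|$ can only drift away from $|v|$ at a rate controlled by the self-consistent force $\nabla_{x}K\star\rho_{f}$.

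First I would introduce the characteristics $\Psi_{t}(x,v)=(X(t,x,v),V(t,x,v))$ from \eqref{trajectories of Vlasov mag}. Since $v\cdot(v\wedge B)=0$ and $\mathrm{div}_{v}(v\wedge B)=0$, the flow is volume-preserving and $f(t,\Psi_{t}(x,v))=f^{\mathrm{in}}(x,v)$. Differentiating $|V|^{2}$ along a characteristic and using $V\cdot(V\wedge B)=0$ yields
\begin{equation*}
  \tfrac{d}{ds}|V|^{2}=-2\,V\cdot(\nabla_{x}K\star\rho_{f})(s,X(s)),
\end{equation*}
whence $\bigl|\tfrac{d}{ds}|V|\bigr|\le|\nabla_{x}K\star\rho_{f}(s,X(s))|$, and therefore
\begin{equation*}
  \bigl||V(t,x,v)|-|v|\bigr| \;\le\; \int_{0}^{t}\|\nabla_{x}K\star\rho_{f}(s,\cdot)\|_{\infty}\,ds \;\le\; E(t),\qquad E(t)\coloneqq \|\nabla_{x}K\|_{ b }\,\|\rho_{f}\|_{L^{\infty}_{t}L^{ b '}_{x}}\,t,
\end{equation*}
where the last bound comes from H\"older's inequality (read in the Lorentz sense if $\nabla_{x}K\notin L^{ b }$ strong, as is the case for (screened) Coulomb at the endpoint) together with Corollary \ref{higher integrability of density}.

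Next I would transfer this velocity control to a weighted pointwise bound on $f$. For $(y,w)$ fixed, let $(x_{0},v_{0})\coloneqq\Psi_{t}^{-1}(y,w)$; the no-work estimate gives $|v_{0}|\ge(|w|-E(t))_{+}$, so
\begin{equation*}
  f(t,y,w)=f^{\mathrm{in}}(x_{0},v_{0}) \le \frac{\|(1+|v|^{n})\,f^{\mathrm{in}}\|_{L^{\infty}}}{1+|v_{0}|^{n}} \le \frac{\|(1+|v|^{n})\,f^{\mathrm{in}}\|_{L^{\infty}}}{1+(|w|-E(t))_{+}^{n}}.
\end{equation*}
Integrating in $w$ and splitting $\{|w|\le E(t)\}$ (contributing a volume factor $\lesssim E(t)^{d}$) from $\{|w|>E(t)\}$ (where the substitution $r=|w|-E(t)$ produces a tail of size $\lesssim 1+E(t)^{d-1}$, finite because $n> d( b '-1)\ge d$), one obtains
\begin{equation*}
  \rho_{f}(t,y) \le c_{n}\,(1+E(t))^{d}\,\|(1+|v|^{n})f^{\mathrm{in}}\|_{L^{\infty}} \le c_{n}\,e^{nE(t)}\,\|(1+|v|^{n})f^{\mathrm{in}}\|_{L^{\infty}},
\end{equation*}
using $(1+E)^{d}\le e^{dE}\le e^{nE}$; this is exactly the announced estimate. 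The uniform $L^{p}$ bounds, $p\in[1,\infty]$, then follow from Lebesgue interpolation between this $L^{\infty}$ control and the conserved mass $\|\rho_{f}(t,\cdot)\|_{1}=\|f^{\mathrm{in}}\|_{1}$.

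The structural argument is quite rigid -- once Corollary \ref{higher integrability of density} is available, the no-work identity drives everything -- but the main subtlety lies in the endpoint nature of $ b $. For Coulomb-type kernels $\nabla_{x}K$ sits only in weak-$L^{ b }$, so the bound $\|\nabla_{x}K\star\rho_{f}\|_{\infty}\le \|\nabla_{x}K\|_{ b }\|\rho_{f}\|_{ b '}$ must either be read through Lorentz duality, or else bypassed via Young's inequality with slightly off-endpoint exponents $(p,p')$ and the corresponding slightly higher integrability of $\rho_{f}$ -- both options fall inside the range supplied by Corollary \ref{higher integrability of density}.
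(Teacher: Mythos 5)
Your proof is correct and reaches the announced estimate, but by a genuinely different, Lagrangian route. The paper argues in Eulerian coordinates: it differentiates the weighted norm $\int_{\mathbb{R}^{2d}}|(1+|v|^{n})f|^{p}\,dx\,dv$, integrates by parts, and observes that the transport and magnetic terms vanish -- the latter thanks to $\mathrm{div}_{v}(v\wedge B)=0$ and $v\cdot(v\wedge B)=0$ -- so that only the electric-force term survives; Gr\"onwall, a $(p-1)$-th root, and the limit $p\to\infty$ then yield the weighted $L^{\infty}$ bound, from which $\rho_{f}\in L^{\infty}$ follows by integrating $1/(1+|v|^{n})$ in $v$. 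You instead observe that the control $\bigl||V(t)|-|v|\bigr|\le E(t)$ delivered by the no-work identity (which the paper records later as Lemma~\ref{lem:nowork}, there in service of the stability analysis), combined with $f(t,\Psi_{t}(x,v))=f^{\mathrm{in}}(x,v)$ along the volume-preserving flow, gives the pointwise decay $f(t,y,w)\lesssim (1+(|w|-E(t))_{+}^{n})^{-1}$, which integrates directly to the claimed density bound since $n>d(b'-1)\ge d$. The two arguments are essentially dual: the Eulerian cancellation of the magnetic contribution after integration by parts is precisely the PDE reflection of your Lagrangian no-work identity. Your route makes more transparent \emph{why} the bound holds (the magnetic field bends but does not accelerate), whereas the paper's is somewhat more robust as it works directly on the weak formulation without invoking the flow map and its invertibility. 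Both proofs share the same endpoint subtlety when invoking $\|\nabla_{x}K\star\rho_{f}\|_{\infty}\lesssim\|\nabla_{x}K\|_{b,\infty}\|\rho_{f}\|_{b'}$, which sits outside the range of Theorem~\ref{weak Young}; you correctly flag this, and the standard repair (splitting $\nabla_{x}K$ into a short-range piece in $L^{q}$, $q<b$, and an integrable tail, paired with the strict inequality $n>d(b'-1)$ which puts $\rho_{f}$ in $L^{p_{n}}$ with $p_{n}>b'$, alongside $\rho_{f}\in L^{1}$) serves both arguments equally.
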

\begin{proof}
The main component of the proof is based on showing the propagation of the velocity weighted $L^
{\infty}$ norm  $\left\Vert (1+\left\vert  v\right\vert^{n} )f(t,x, v)\right\Vert_{L^\infty_{x, v}} $. We first propagate the velocity weighted $L^{p}$ norms for finite $p$. Given $1\leq p<\infty$ we compute that  
\begin{align*}
\frac{d}{dt}\int_{\mathbb{R}^{2d}}\left\vert (1+\left\vert  v\right\vert^{n} )f(t,x, v)\right\vert^{p}\ dxd v=&p\int_{\mathbb{R}^{2d}}(1+\left\vert  v\right\vert^{n})^{p-1}f^{p-1}(t,x, v)\partial_{t}f(t,x, v)\ dxd v\\
=&-p\int_{\mathbb{R}^{2d}}(1+\left\vert  v\right\vert^{n})^{p-1}f^{p-1}(t,x, v) v\cdot\nabla_{x}f(t,x, v)\ dxd v\\
&-p\int_{\mathbb{R}^{2d}}(1+\left\vert  v\right\vert^{n})^{p-1}f^{p-1}(t,x, v) v\wedge B(t,x)\cdot \nabla_{ v}f(t,x, v)\ dxd v\\
&-p\int_{\mathbb{R}^{2d}}(1+\left\vert  v\right\vert^{n})^{p-1}f^
{p-1}(t,x, v)\nabla_{x}K\star \rho_{f}(t,x)\cdot\nabla_{ v}f(t,x, v)\ dxd v. 
\end{align*}
First, we recognize that 
\begin{align}
-p&\int_{\mathbb{R}^{2d}}(1+\left\vert  v\right\vert^{n})^{p-1}f^{p-1}(t,x, v) v\cdot\nabla_{x}f(t,x, v)\ dxd v\\
&=-\int_{\mathbb{R}^{2d}}\nabla_{x} f^{p}(t,x, v)\cdot  v (1+\left\vert  v\right\vert^{n})\ dxd v=0, \label{vanishing}    
\end{align}
where the last equation is due to integration by parts.  
Furthermore, we have 
\begin{align*}
-\int_{\mathbb{R}^{2d}}&(1+\left\vert  v\right\vert^{n})^{p-1}\nabla_{ v}f^{p}(t,x, v)\nabla_{x}K\star \rho_{f}(t,x)\ dxd v\\
&=(p-1)\int_{\mathbb{R}^{2d}}(1+\left\vert v\right\vert^{n})^{p-2}\nabla_{ v}\left\vert  v\right\vert^{n}\cdot\nabla_{x}K\star\rho_{f}(t,x)f^{p}(t,x, v)\ dxd v\\
&=n(p-1)\int_{\mathbb{R}^{2d}}(1+\left\vert  v\right\vert^{n} )^{p-2}\left\vert  v\right\vert^{n-2} v\cdot \nabla_{x}K\star \rho_{f}(t,x)f^{p}(t,x, v)\ dxd v\\
&\leq n(p-1)\int_{\mathbb{R}^{2d}}(1+\left\vert  v\right\vert^{n})^{p-2}\left\vert  v\right\vert^{n-1}\left\vert \nabla_{x}K\star \rho_{f}\right\vert(t,x) f^{p}(t,x, v)\ dxd v.    
\end{align*}
Using the estimate 
\begin{align*}
\left\vert  v\right\vert^{n-1}\leq \frac{1}{n}+\frac{n-1}{n}\left\vert  v\right\vert^{n}\leq 1+\left\vert  v\right\vert^n      
\end{align*}
we obtain 
\begin{align}
&\textbf{}\left\vert \int_{\mathbb{R}^{2d}}(1+\left\vert  v\right\vert^{n})^{p-1}\nabla_{ v}f^{p}(t,x, v)\nabla_{x}K\star \rho_{f}(t,x)\ dxd v \right\vert \notag\\
&\leq n(p-1)\left\Vert \nabla_{x} K\star \rho_{f}(t,\cdot)\right\Vert_{\infty}\int_{\mathbb{R}^{2d}}(1+\left\vert  v\right\vert^{n})^{p-1}f^{p}(t,x, v)\ dxd v \notag\\
&\leq n(p-1)\left\Vert \nabla_{x} K\right\Vert_{ b ,\infty}\left\Vert \rho_{f}\right\Vert_{L^{\infty}_{t}L^{ b '}_{x}}\int_{\mathbb{R}^{2d}}(1+\left\vert  v\right\vert^{n})^{p}f^{p}(t,x, v)\ dxd v. \label{second ine}        
\end{align}
Note that by Corollary \ref{higher integrability of density} we have $\rho_{f}\in L^{\infty}([0,T];L^{ b '}(\mathbb{R}^{d}))$.  Observing that $\mathrm{div}_{ v}( v\wedge B)=0$ and integrating by parts we get 
\begin{align}
-&\int_{\mathbb{R}^{2d}}\nabla_{ v}f^{p}( v \wedge B)(1+\left\vert  v\right\vert^{n})^{p-1}\ dxd v \notag\\
&=\int_{\mathbb{R}^{2d}}\nabla_{ v}(1+\left\vert  v\right\vert^{n})^{p-1} v\wedge B(t,x)f^{p}(t,x, v)\ dxd v \notag\\
&+\int_{\mathbb{R}^{2d}}\mathrm{div}_{ v}( v\wedge B(t,x))(1+\left\vert  v\right\vert^{n})^{p-1}f^{p}(t,x, v)\ dxd v
\notag\\
&=n(p-1)\int_{\mathbb{R}^{2d}}(1
+\left\vert  v\right\vert^{n} )^{p-2}\left\vert  v\right\vert^{n-2} v\cdot v\wedge B(t,x) f^{p}(t,x, v)\ dxd v=0, \label{second vanishing}     
\end{align}
where in the last equation we used that $ v\cdot  v\wedge B=0$. Combining \eqref{vanishing},\eqref{second ine} and \eqref{second vanishing} we infer 
\begin{align*}
\frac{d}{dt}\int_{\mathbb{R}^{2d}}\left\vert (1+\left\vert  v\right\vert^{n} )f(t,x, v)\right\vert^{p}\ dxd v\leq n(p-1)\left\Vert \nabla_x K\right\Vert_{ b ,\infty}\left\Vert \rho_{f}\right\Vert_{L^{\infty}_{t}L^{ b '}_{x}}\int_{\mathbb{R}^{2d}}\left\vert (1+\left\vert  v\right\vert^n)f(t,x, v)\right\vert^{p}\ dxd v,         
\end{align*}
which by Gr\"onwall's lemma implies 
\begin{align*}
&\int_{\mathbb{R}^{2d}}\left\vert (1+\left\vert  v\right\vert^{n} )f(t,x, v)\right\vert^{p}\ dxd v\\
&\leq \exp\left(n(p-1)\left\Vert \nabla_x K\right\Vert_{ b ,\infty}\left\Vert \rho_{f}\right\Vert_{L^{\infty}_{t}L^{ b '}_{x}}t\right)\int_{\mathbb{R}^{2d}}\left\vert (1+\left\vert  v\right\vert^{n} )f^{\mathrm{in}}(x, v)\right\vert^{p}\ dxd v.   
\end{align*}
Taking the $p-1$ root and letting $p\rightarrow \infty$ yields 
\begin{align}
\left\Vert (1+\left\vert  v\right\vert^{n}) f(t,x, v)\right\Vert_{L^\infty_{x, v}}\leq \exp\left(n\left\Vert \nabla_x K\right\Vert_{ b ,\infty}\left\Vert \rho_{f}\right\Vert_{L^{\infty}_{t}L^{ b '}_{x}}t\right)\left\Vert (1+\left\vert  v\right\vert^{n}) f^{\mathrm{in}}(x, v)\right\Vert_{L^\infty_{x, v}}. \label{weighted Linfty prefinal}  
\end{align}
Thanks to \eqref{weighted Linfty prefinal} we get that  
\begin{align*}
\rho_{f}(t,x)=\int_{\mathbb{R}^{d}}\frac{1}{1+\left\vert  v\right\vert^{n}}(1+\left\vert  v\right\vert^{n} )f(t,x, v)\ d v&\leq \left\Vert (1+\left\vert  v\right\vert^{n})f(t,x, v)\right\Vert_{L^\infty_{x, v}}\int_{\mathbb{R}^{d}}\frac{1}{1+\left\vert  v\right\vert^{n}}\ d v\\
&\leq c_{n}\exp\left(n\left\Vert \nabla_x K\right\Vert_{ b ,\infty}\left\Vert \rho_{f}\right\Vert_{L^{\infty}_{t}L^{ b '}_{x}}t\right)\left\Vert (1+\left\vert  v\right\vert^{n}) f^{\mathrm{in}}(x, v)\right\Vert_{L^\infty_{x,v}}.  \end{align*}
Note that $n>d$ and therefore $c_{n}=\int_{\mathbb{R}^{d}}\frac{1}{1+\left\vert v\right\vert^{n}}\ dv<\infty$. 
\end{proof}
We proceed by propagation Sobolev regularity.  We will employ the following end point case of the Calderon-Zygmund inequality as well as a slightly more general form of Lemma \ref{Magnetic-kinetic-interpolation}.
\begin{proposition} 
\label{near boundedness}  
\textup{(\cite[Proposition 7.7]{bahouri2011fourier})}
Let $s>0$  and let $a\in[1,\infty)$ and $b\in[1,\infty]$. Then, there is some $C>0$ such that 
\begin{align*}
\left\Vert \nabla^2 V\star \mu \right\Vert_{\infty}\leq 
C\left(\mathrm{min} \left(\left\Vert\nabla V\star \mu\right\Vert_{b},\left\Vert \mu\right\Vert_{a}\right)+\left\Vert \mu\right\Vert_{\infty}(1+\left|\log\left\Vert \mu\right\Vert_{\infty}\right|\right)\log\left(\max\left(e,\left\Vert \mu\right\Vert_{\infty}\right)+\left\Vert \mu \right\Vert_{C^{0,s}}\right).
\end{align*} 
\end{proposition}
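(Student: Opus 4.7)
The plan is to prove this logarithmic Calder\'on--Zygmund estimate via a three-scale Littlewood--Paley decomposition of the input $\mu$. Fix two integers $N_0<N_1$ to be optimized, and split
\begin{align*}
\mu=S_{N_0}\mu+\sum_{N_0\le j<N_1}\Delta_j\mu+(\mathrm{Id}-S_{N_1})\mu,
\end{align*}
where $(\Delta_j)_j$ are dyadic Littlewood--Paley blocks and $S_N$ denotes the low-frequency truncation at scale $2^N$. The idea is to absorb the very low frequencies into the $\min(\|\nabla V\star\mu\|_b,\|\mu\|_a)$ term, to estimate the middle band crudely by the $L^\infty$ norm of $\mu$ (with the size of the band producing the first logarithm), and to exploit the H\"older regularity of $\mu$ to close up the high-frequency tail.

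For the low-frequency piece $S_{N_0}\mu$, I would rewrite $\nabla^2 V\star S_{N_0}\mu=\partial_i(\nabla V\star S_{N_0}\mu)$ and appeal to Bernstein's inequality: one derivative on the low-frequency piece costs only a bounded constant. Then Young's inequality applied with H\"older pair $(b,b')$ delivers a bound by a multiple of $\|\nabla V\star\mu\|_b$, while a direct application of Young's inequality to $\nabla^2 V\star S_{N_0}\mu$ yields a bound by a multiple of $\|\mu\|_a$; taking the minimum produces the first term on the right-hand side. For the high-frequency tail, the defining H\"older norm controls each block as $\|\Delta_j\mu\|_\infty\lesssim 2^{-js}\|\mu\|_{C^{0,s}}$, and since $\nabla^2 V\star$ is a zero-order Fourier multiplier on a fixed dyadic shell one has
\begin{align*}
\Bigl\|\sum_{j\ge N_1}\nabla^2 V\star\Delta_j\mu\Bigr\|_\infty\lesssim 2^{-N_1 s}\|\mu\|_{C^{0,s}}.
\end{align*}

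For the intermediate band, I would simply sum the trivial pointwise bound $\|\nabla^2 V\star\Delta_j\mu\|_\infty\le C\|\Delta_j\mu\|_\infty\le C\|\mu\|_\infty$ over the $\sim N_1-N_0$ indices, giving a contribution of the form $C(N_1-N_0)\|\mu\|_\infty$. The crucial optimization is then to pick $N_1$ so that $2^{-N_1 s}\|\mu\|_{C^{0,s}}\sim\|\mu\|_\infty$, that is, $N_1\sim s^{-1}\log(1+\|\mu\|_{C^{0,s}}/\|\mu\|_\infty)$, keeping $N_0$ fixed. The intermediate contribution is then a multiple of $\|\mu\|_\infty\log(1+\|\mu\|_{C^{0,s}}/\|\mu\|_\infty)$, which by elementary algebraic manipulation can be recast, when $\|\mu\|_\infty\le 1$, as $\|\mu\|_\infty(1+|\log\|\mu\|_\infty|)\log(\max(e,\|\mu\|_\infty)+\|\mu\|_{C^{0,s}})$; when $\|\mu\|_\infty\ge 1$ the $|\log\|\mu\|_\infty|$ factor is trivially absorbed into the outer logarithm.

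The principal obstacle is obtaining the precise form of the logarithmic correction, specifically the $(1+|\log\|\mu\|_\infty|)$ factor, which arises from the small-$\|\mu\|_\infty$ regime where the naive ratio $\|\mu\|_{C^{0,s}}/\|\mu\|_\infty$ can be much larger than $\max(e,\|\mu\|_\infty)+\|\mu\|_{C^{0,s}}$: one must split cases according to whether $\|\mu\|_\infty$ is above or below a fixed threshold. A secondary subtlety is justifying the pointwise bound $\|\nabla^2 V\star\Delta_j\mu\|_\infty\le C\|\Delta_j\mu\|_\infty$ on a single frequency band, since $\nabla^2 V$ is a genuine Calder\'on--Zygmund kernel (not integrable near $0$); here one observes that after frequency localization the kernel $\Delta_j\nabla^2 V$ is Schwartz with $L^1$ norm uniform in $j$ by scaling, making Young's inequality applicable on each block.
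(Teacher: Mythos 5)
Your proposal correctly reconstructs the standard Littlewood--Paley proof of this log-Lipschitz Calder\'on--Zygmund estimate; note that the paper itself gives no proof — it only cites \cite[Proposition 7.7]{bahouri2011fourier} — and the argument in that reference is exactly the three-scale decomposition you describe. The low-frequency piece $S_{N_0}\mu$ is handled either via Bernstein (producing $\|\nabla V\star\mu\|_b$) or via Young with the frequency-localized Riesz-type kernel, which is smooth and decays like $|x|^{-d}$, hence lies in $L^{a'}$ precisely for $a\in[1,\infty)$ — which is why that hypothesis on $a$ appears; the intermediate band of $\sim N_1-N_0$ shells gives $\|\mu\|_\infty(N_1-N_0)$; the tail is summed geometrically from the H\"older characterization $\|\Delta_j\mu\|_\infty\lesssim 2^{-js}\|\mu\|_{C^{0,s}}$; and optimizing $N_1$ yields the logarithm. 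You also correctly flag the two subtleties — justifying the uniform-in-$j$ bound $\|\nabla^2 V\star\Delta_j\mu\|_\infty\lesssim\|\Delta_j\mu\|_\infty$ (frequency localization turns the CZ kernel into an $L^1$ one with norm uniform in $j$ by scaling) and recasting $\log(1+\|\mu\|_{C^{0,s}}/\|\mu\|_\infty)$ as the stated $\big(1+|\log\|\mu\|_\infty|\big)\log\big(\max(e,\|\mu\|_\infty)+\|\mu\|_{C^{0,s}}\big)$ by splitting on $\|\mu\|_\infty\gtrless 1$ — so the sketch, once the routine bookkeeping is written out, is a complete and correct reconstruction of the cited proof.
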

\begin{lemma}
\label{Magnetic-kinetic-interpolation_2} Suppose that $d\geq 1$, that $0\leq k\leq n$ and 
and put $p_{n,r,d} \coloneqq \frac{nr'+d}{n(r'-1)+d}, \theta_{n,r,d}\coloneqq\frac{r'}{p'_{n,r,d}}$. 
Then, for any $f\in L^{r}(\mathbb{R}^{d}\times\mathbb{R}^{d})$ with $M_{n,f} < \infty$
it holds that 
\begin{align*}
\left\Vert \int_{\mathbb{R}^{d}}\left| v\right|^{k}f(x, v)\ d v\right\Vert _{L^{p_{n,r,d}}_{x}}\leq C\left(\int_{\mathbb{R}^{2d}}\left| v\right|^{n}f(x, v)\ dxd v\right)^{1-\theta_{n,r,d}}\left\Vert f\right\Vert _{r}^{\theta_{n,r,d}},
\end{align*}
where $C=C_{n,r,d}$. 
\end{lemma}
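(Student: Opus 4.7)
The plan is to adapt the truncation-and-optimisation scheme used to prove Lemma~\ref{Magnetic-kinetic-interpolation} by replacing the crude $L^\infty_v$ estimate of the $v$-marginal there with a Hölder inequality in $L^r_v$. Fix $x \in \mathbb{R}^d$ and a threshold $R = R(x) > 0$ to be chosen, and split the velocity integral at $|v| = R$. On the outer region the elementary bound $|v|^k \le |v|^n R^{k-n}$, valid since $k \le n$, yields
\[
\int_{|v| > R} |v|^k f(x,v) \, dv \le R^{k-n} \int_{\mathbb{R}^d} |v|^n f(x,v) \, dv,
\]
while on the inner region Hölder's inequality in $v$ with conjugate exponents $r, r'$, combined with the explicit computation $\int_{|v| \le R} |v|^{k r'} dv = c_{k,r,d}\, R^{k r' + d}$, gives
\[
\int_{|v| \le R} |v|^k f(x,v) \, dv \le C_{k,r,d}\, R^{k + d/r'} \, \|f(x,\cdot)\|_{L^r_v}.
\]

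Next, write $A(x) \coloneqq \|f(x,\cdot)\|_{L^r_v}$ and $B(x) \coloneqq \int |v|^n f(x,v) \, dv$, and minimise the sum of the two contributions over $R > 0$. The two terms balance when $R^{n + d/r'} \simeq B(x)/A(x)$, and inserting this choice produces the pointwise estimate
\[
\int_{\mathbb{R}^d} |v|^k f(x,v) \, dv \le C\, A(x)^{\alpha} \, B(x)^{1 - \alpha}, \qquad \alpha \coloneqq \tfrac{k + d/r'}{n + d/r'}.
\]

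Finally I take the $L^{p}_x$ norm of both sides and apply Hölder in $x$ to the product $A^{\alpha} B^{1-\alpha}$ with a pair of conjugate exponents $(q_1,q_2)$ chosen so that $A^{\alpha} \in L^{q_1}_x$ reduces to $\|f\|_{L^r_{x,v}}^{\alpha}$ (forcing $p\alpha q_1 = r$) and $B^{1-\alpha} \in L^{q_2}_x$ reduces to $M_{n,f}^{1-\alpha}$ (forcing $p(1-\alpha) q_2 = 1$). The conjugation condition $1/q_1 + 1/q_2 = 1$ then becomes $p\alpha/r + p(1-\alpha) = 1$, which pins $p$ down to the value $p_{n,r,d}$ of the statement, after which a direct rearrangement using $p'=p/(p-1)$ identifies the exponent $1-\alpha$ on $M_{n,f}$ with $1-\theta_{n,r,d}$ and the exponent $\alpha$ on $\|f\|_r$ with $\theta_{n,r,d}$ (in the case $k = 0$ envisaged by the notation of the statement, where $\theta_{n,r,d}$ is declared independent of $k$).

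The proof is free of analytic subtleties; the only place where care is required is the exponent arithmetic in the final Hölder step, which is entirely algebraic and corresponds to the compatibility between the integration and interpolation exponents, exactly as in the $r = \infty$ case treated in Lemma~\ref{Magnetic-kinetic-interpolation}.
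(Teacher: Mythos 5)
The method you propose --- truncating at $|v|=R$, bounding the outer region by $R^{k-n}\int |v|^n f\,dv$, bounding the inner region via H\"older in $v$ against $\|f(x,\cdot)\|_{L^r_v}$, optimising over $R$, and then applying H\"older in $x$ --- is the standard and correct way to prove this lemma (the paper states both kinetic interpolation lemmas without proof, so there is no ``paper's own proof'' to compare with, but yours is the canonical reconstruction). You are also right that the stated exponents $p_{n,r,d},\theta_{n,r,d}$, which carry no $k$-dependence, are only compatible with the case $k=0$, which is the only case the paper actually uses.

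There is, however, a genuine error in the exponent arithmetic, not a mere typo: it propagates to a wrong value of $p$. At the balance point $R^{n+d/r'}\simeq B(x)/A(x)$, plugging back into either term gives
\[
R^{k-n}B(x)\;\simeq\;\left(\frac{B(x)}{A(x)}\right)^{\frac{k-n}{n+d/r'}}B(x)\;=\;B(x)^{\alpha}\,A(x)^{1-\alpha},\qquad \alpha=\frac{k+d/r'}{n+d/r'},
\]
so the pointwise bound is $C\,B(x)^{\alpha}A(x)^{1-\alpha}$, \emph{not} $C\,A(x)^{\alpha}B(x)^{1-\alpha}$ as you wrote. With your (swapped) exponents the subsequent H\"older step in $x$ leads to the conjugation condition $p\alpha/r+p(1-\alpha)=1$, which for $k=0$ yields $p=r(nr'+d)/(nrr'+d)$, and this does \emph{not} equal $p_{n,r,d}=(nr'+d)/(n(r'-1)+d)$ except in degenerate cases. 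With the correct exponents one requires $p(1-\alpha)q_1=r$ and $p\alpha q_2=1$, so the conjugation condition reads $p\bigl((1-\alpha)/r+\alpha\bigr)=1$; for $k=0$ this gives $p=(nr'+d)/(nr'-n+d)=p_{n,r,d}$, and one then identifies the exponent on $M_{n,f}$ as $\alpha=1-\theta_{n,r,d}$ and that on $\|f\|_r$ as $1-\alpha=\theta_{n,r,d}$, matching the statement. So the proof is salvageable by swapping the roles of $A$ and $B$ in the pointwise estimate and carrying that through the final H\"older step.
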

\begin{proof} (\textit{of Theorem \ref{propagation of regularity intro}}). Put 
\begin{align*}
S(t)\coloneqq \left\Vert \nabla_{x}f(t,\cdot)\right\Vert_{L^{p}(e^{\lambda(t)w(x,v)})}^{p}+\left\Vert \nabla_{v}f(t,\cdot)\right\Vert_{L^{p}(e^{\lambda(t)w(x,v)})}^{p}.      
\end{align*}
We study the time derivative of $\frac{d}{dt}S(t)$ as follows.\\ 
\textbf{Step 1}. (\textit{Time derivative of}$\left\Vert \nabla_{x} f(t,\cdot)\right\Vert_{L^{p}(e^{\lambda(t)w(x, v)})}^{p}$). To make the equations lighter, we omit dependency on $(t,x, v)$ whenever there is no ambiguity. We compute that 
\begin{align*}
\frac{d}{dt}&\int_{\mathbb{R}^{2d}}e^{\lambda(t)w(x, v)}\left\vert \nabla_{x} f\right\vert^{p}(t,x, v)\ dxd v\\
=&p\int_{\mathbb{R}^{2d}}e^{\lambda(t)w(x, v)}\left\vert \nabla_{x}f\right\vert^{p-2}\nabla_{x}f\nabla_{x}\partial_{t}f\ dxd v+\int_{\mathbb{R}^{2d}}e^{\lambda(t)w(x, v)}\dot\lambda(t)w(x, v)\left\vert \nabla_{x} f\right\vert^{p}\ dxd v\\
=&-p\int_{\mathbb{R}^{2d}}e^{\lambda(t)w(x, v)}\left\vert \nabla_{x} f\right\vert^{p-2}\nabla_{x} f  \nabla_{x}\left( v\cdot \nabla_{x}f+( v\wedge B +\nabla_{x}K\star \rho_{f})\cdot \nabla_{ v}f\right)\ dxd v\\
&+\int_{\mathbb{R}^{2d}}e^{\lambda(t)w(x, v)}\dot\lambda(t)w(x, v)\left\vert \nabla_{x}f\right\vert^{p}\ dxd v\\
=&-p\int_{\mathbb{R}^{2d}}e^{\lambda(t)w(x, v)}\left\vert \nabla_{x}f \right\vert^{p-2}\nabla_{x}f\cdot\nabla^{2}_{x}f  v \ dxd v\\
&-p\int_{\mathbb{R}^{2d}}e^{\lambda(t)w(x, v)}\left\vert \nabla_{x}f\right\vert^{p-2}\nabla_{x}fD_{x}( v\wedge B)\nabla_{v}f \ dxd v\\
&-p\int_{\mathbb{R}^{2d}}e^{\lambda(t)w(x, v)}\left\vert \nabla_{x}f\right\vert^{p-2}\nabla_{x}f\cdot (D_{x}\nabla_{ v}f v \wedge B)\ dxd v\\
&-p\int_{\mathbb{R}^{2d}}e^{\lambda(t)w(x, v)}\left\vert \nabla_{x}f\right\vert^{p-2}\nabla_{x}f\cdot \nabla_{x}^{2}K\star \rho_{f}\nabla_{ v}f\ dxd v\\
&-p\int_{\mathbb{R}^{2d}}e^{\lambda(t)w(x, v)}\left\vert \nabla_{x}f\right\vert^{p-2}\nabla_{x}f\cdot (D_{x}\nabla_{ v}f\nabla_{x}K\star \rho_{f}) \ dxd v\\
&+\int_{\mathbb{R}^{2d}}e^{\lambda(t)w(x, v)}\dot\lambda(t)w(x, v)\left\vert \nabla_{x}f\right\vert^{p}\ dxd v\coloneqq \sum_{k=1}^{6}T_{k}.
\end{align*}
First, note that integration by parts reveals that 
\begin{align}
T_{1}=-\int_{\mathbb{R}^{2d}}\nabla_{x}\left\vert \nabla_{x}f \right\vert^{p}\cdot  v e^{\lambda(t)w(x, v)}\ dxd v&=\lambda(t)\int_{\mathbb{R}^{2d}}\left\vert \nabla_{x}f\right\vert^{p}x\cdot  v e^{\lambda(t)w(x, v)}\ dxd v \notag\\
&\leq \lambda(t)\int_{\mathbb{R}^{2d}}\left\vert \nabla_{x}f\right\vert^{p}w(x, v)e^{\lambda(t)w(x, v)}\ dxd v, \label{T1 est}  
\end{align}
and 
\begin{align}
T_{3}&=-\int_{\mathbb{R}^{2d}}e^{\lambda(t)w(x, v)}\nabla_{ v}\left\vert \nabla_{x}f\right\vert^{p}( v\wedge B)\ dxd v \notag\\
&=\int_{\mathbb{R}^{2d}}e^{\lambda(t)w(x, v)}\left\vert \nabla_{x}f\right\vert^{p}\mathrm{div}_{ v}( v\wedge B)\ dxd v+\int_{\mathbb{R}^{2d}}e^{\lambda(t)w(x, v)}\lambda(t)\nabla_{ v}w(x, v) v\wedge B\left\vert \nabla_{x}f\right\vert^{p} \ dxd v =0,    \label{T3 vanish} 
\end{align}
because $\mathrm{div}_{v}( v\wedge B)=0$ and $\nabla_{ v}w(x, v)\cdot  v\wedge B=0$. 
Furthermore, we estimate  
\begin{align*}
T_{2}&=-p\int_{\mathbb{R}^{2d}}e^{\lambda(t)w(x, v)}\left\vert \nabla_{x}f\right\vert^{p-2}\nabla_{x}fD_{x}( v\wedge B)\nabla_{ v}f\ dxd v\\
&\leq p\left\Vert D_{x}B\right\Vert_{\infty} \int_{\mathbb{R}^{2d}}e^{\lambda(t)w(x, v)}\left\vert \nabla_{x}f\right\vert^{p-1}\left\vert  v\right\vert\left\vert \nabla_{ v}f\right\vert\ dxd v\\
&\leq p\left\Vert D_{x}B\right\Vert_{\infty} \left(\frac{p-1}{p}\int_{\mathbb{R}^{2d}}e^{\lambda(t)w(x, v)}\left\vert  v\right\vert^{\frac{p}{p-1}}\left\vert \nabla_{x}f\right\vert^{p}\  dxd v+\frac{1}{p} \int_{\mathbb{R}^{2d}}e^{\lambda(t)w(x, v)}\left\vert \nabla_{ v}f\right\vert^{p}\ dxd v\right).  
\end{align*}
Note that by Young's inequality we have 
\begin{align*}
\left\vert  v\right\vert^{\frac{p}{p-1}}\leq \frac{p-2}{2(p-1)}+\frac{p}{2(p-1)}\left\vert  v\right\vert^{2} \leq 1+\left\vert  v\right\vert^{2}\leq w(x, v).      
\end{align*}
Substituting this inequality in the previous estimate yields 
\begin{align}
T_{2}\leq \left\Vert D_{x}B\right\Vert_{\infty}(p-1)\int_{\mathbb{R}^{2d}}e^{\lambda(t)w(x, v)}w(x, v)\left\vert\nabla_{x}f\right\vert^{p}\ dxd v \\+\left\Vert D_{x}B\right\Vert_{\infty}\int_{\mathbb{R}^{2d}}e^{\lambda(t)w(x, v)}\left\vert \nabla_{ v}f\right\vert^{p}\ dxd v. \label{T2 est}         
\end{align}
Next we estimate $T_{5}$.   
\begin{align}
T_{5}&=-\int_{\mathbb{R}^{2d}}e^{\lambda(t)w(x, v)}\nabla_{ v}\left\vert \nabla_{x}f\right\vert^{p}\nabla_{x}K\star \rho_{f}\ dxd v \notag\\
&=\int_{\mathbb{R}^{2d}}e^{\lambda(t)w(x, v)}\lambda(t)\nabla_{ v}w(x, v)\nabla_{x}K\star\rho_{f}\ \left\vert \nabla_{x}f\right\vert^{p} dxd v \notag \\
&\leq \left\Vert \nabla_{x}K\right\Vert_{ b ,\infty}\left\Vert \rho_{f}(t,\cdot)\right\Vert_{L^{\infty}L^{ b '}}\int_{\mathbb{R}^{2d}}\lambda(t)e^{\lambda(t)w(x, v)}w(x, v)\left\vert \nabla_{x}f\right\vert^{p}\ dxd v \notag\\
&\leq \left\Vert \nabla_x K\right\Vert_{ b ,\infty}\phi_{0}\int_{\mathbb{R}^{2d}}\lambda(t)e^{\lambda(t)w(x, v)}w(x, v)\left\vert \nabla_{x}f\right\vert^{p}\ dxd v. \label{T5 est}
\end{align}
Combining \eqref{T1 est},\eqref{T2 est} and \eqref{T5 est} together we conclude that 
\begin{align}
T_{1}+T_{2}+T_{5}+T_{6}\leq&  \lambda(t)(1+\left\Vert D_{x}B\right\Vert_{\infty}(p-1)+ \left\Vert \nabla_x K\right\Vert_{ b ,\infty}\phi_{0}) \int_{\mathbb{R}^{2d}}e^{\lambda(t)w(x, v)}w(x, v)\left\vert \nabla_{x}f\right\vert^{p} \ dxd v \notag\\
&+\dot\lambda(t)\int_{\mathbb{R}^{2d}}e^{\lambda(t)w(x, v)}w(x, v)\left\vert \nabla_{x}f\right\vert^{p}\ dxd v+\left\Vert D_{x}B\right\Vert_{\infty} \int_{\mathbb{R}^{2d}}e^{\lambda(t)w(x, v)}\left\vert \nabla_{ v}f\right\vert^{p}\ dxd v \notag\\
\leq& \left\Vert D_{x}B\right\Vert_{\infty} \int_{\mathbb{R}^
{2d}}e^{\lambda(t)w(x, v)}\left\vert \nabla_{ v}f\right\vert^{p}\ dxd v  \label{T1T2T5T6 est},      
\end{align}
where the last inequality is due to the assumption that  $$\dot\lambda(t)+\lambda(t)(1+\left\Vert D_{x}B\right\Vert_{\infty}(p-1)+ \left\Vert \nabla_x K\right\Vert_{ b ,\infty}\phi_{0}) \leq 0.$$ 
To estimate $T_{4}$, observe that  
\begin{align*}
T_{4}\leq& \left\Vert \nabla^{2}K\star \rho_{f}(t,\cdot)\right\Vert_{\infty}\left(\frac{p-1}{p}\int_{\mathbb{R}^{2d}}e^{\lambda(t)w(x, v)}\left\vert \nabla_{x}f\right\vert^{p}\ dxd v+\int_{\mathbb{R}^{2d}}e^{\lambda(t)w(x, v)}\left\vert \nabla_{ v}f\right\vert^{p}\ dxd v \right).      
\end{align*}
By Proposition \ref{near boundedness} we have 
\begin{align*}
\left\Vert \nabla^{2}K\star \rho_{f}(t,\cdot) \right\Vert_{\infty}\leq C'\left(1+\log(1+\left\Vert \rho_{f}(t,\cdot)\right\Vert_{W^{1,q}})\right)    
\end{align*}
where $C'=C'(q,\phi_{0})$. 
Therefore we get 
\begin{align}
T_{4}\leq C'\left(1+\log(1+\left\Vert \rho_{f}(t,\cdot)\right\Vert_{W^{1,q}})\right)\left(\frac{p-1}{p}\int_{\mathbb{R}^{2d}}e^{\lambda(t)w(x, v)}\left\vert \nabla_{x}f\right\vert^{p}\ dxd v+\int_{\mathbb{R}^{2d}}e^{\lambda(t)w(x, v)}\left\vert \nabla_{ v}f\right\vert^{p}\ dxd v \right). \label{T4 est}     
\end{align}
By Lemma \ref{Magnetic-kinetic-interpolation_2} applied for $q'=p'+\frac{d}{n}$ we have the inequality  
\begin{align*}
\left\Vert \nabla \rho_{f}(t,\cdot)\right\Vert_{q}&\leq C\left(\int_{\mathbb{R}^{2d}}\left\vert  v\right\vert^{n} \left\vert \nabla_{x}f\right\vert\ dxd v \right)^{1-\theta_{n,p,d}}\left\Vert \nabla_{x}f(t,\cdot)\right\Vert_{p}^{\theta_{n,p,d}}\\
&\leq C\left( (1-\theta_{n,p,d})\int_{\mathbb{R}^{2d}}\left\vert  v\right\vert^{n}\left\vert \nabla_{x}f\right\vert\ dxd v  +\theta_{n,p,d}\left\Vert \nabla_{x}f(t,\cdot)\right\Vert_{p}\right).   
\end{align*}
We can further bound 
\begin{align*}
\int_{\mathbb{R}^{2d}}\left\vert  v\right\vert^{n}\left\vert \nabla_{x}f\right\vert\ dxd v&=\int_{\mathbb{R}^{2d}}\left\vert  v\right\vert^{n} e^{-\frac{\lambda(t)w(x, v)}{p}}e^{\frac{\lambda(t)w(x, v)}{p}}\left\vert \nabla_{x}f\right\vert(t,x, v)\ dxd v\\
&\leq \frac{p-1}{p}\int_{\mathbb{R}^{2d}}\left\vert  v\right\vert^{\frac{np}{p-1}}e^{-\frac{\lambda(t)w(x, v)}{p-1}}\ dxd v+\frac{1}{p}\int_{\mathbb{R}^{2d}}e^{\lambda(t)w(x, v)}\left\vert \nabla_{x}f\right\vert^{p}\ dxd v.   
\end{align*}
Noticing that the first integral is absolutely convergent we can write 
\begin{align*}
\frac{p-1}{p}\int_{\mathbb{R}^{2d}}\left\vert  v\right\vert^{\frac{np}{p-1}}e^{-\frac{\lambda(t)w(x, v)}{p-1}}\ dxd v\lesssim 1    
\end{align*}
and thus 
\begin{align*}
\int_{\mathbb{R}^{2d}}\left\vert  v\right\vert^{n}\left\vert \nabla_{x}f\right\vert\ dxd v\lesssim 1+\left\Vert \nabla_{x}f(t,\cdot)\right\Vert^{p}_{L^{p}(e^{\lambda(t)w(x, v)})}.      
\end{align*}
Hence we have proved that 
\begin{align*}
\left\Vert \nabla_{x}\rho_{f}(t,\cdot)\right\Vert_{q}\lesssim 1+\left\Vert \nabla_{x}f(t,\cdot)\right\Vert_{L^{p}(e^{\lambda(t)w(x, v)})}^{p}.       
\end{align*}
To summarize, \eqref{T1T2T5T6 est} and \eqref{T4 est} entail 
\begin{align}
\frac{d}{dt}&\int_{\mathbb{R}^{2d}}e^{\lambda(t)w(x, v)}\left\vert\nabla_{x}f \right\vert^{p}\ dxd v \notag\\
&\lesssim \left(1+\log(1+\left\Vert \nabla_{x}f(t,\cdot)\right\Vert_{L
^
{p}(e^{\lambda(t)w(x, v)})}^{p})\right)\\
&\times \left(\frac{p-1}{p}\int_{\mathbb{R}^{2d}}e^{\lambda(t)w(x, v)}\left\vert \nabla_{x}f\right\vert^{p}\ dxd v+\int_{\mathbb{R}^{2d}}e^{\lambda(t)w(x, v)}\left\vert \nabla_{ v}f\right\vert^{p}\ dxd v \right).   \label{x est} \end{align}
\textbf{Step 2}. (\textit{Time derivative of $\left\Vert \nabla_{ v}f\right\Vert_{L^{p}(e^{\lambda(t)w(x, v)})}^{p} $}). We compute  
\begin{align*}
\frac{d}{dt}&\int_{\mathbb{R}^{2d}}e^{\lambda(t)w(x,v)}\left\vert \nabla_{ v}f\right\vert^{p}\ dxd v\\
&=-p\int_{\mathbb{R}^{2d}}e^{\lambda(t)w(x, v)}\left\vert \nabla_{ v}f\right\vert^{p-2}\nabla_{ v}f(\nabla_{x}f+D_{ v}\nabla_{x}f\cdot  v+D_{ v}( v\wedge B)\nabla_{ v}f+\nabla_{ v}^{2}f v\wedge B+\nabla_{x} K\star \rho_{f}\nabla^{2}_{ v}f)\ dxd v\\
&+\int_{\mathbb{R}^{2d}}e^{\lambda(t)w(x, v)}\dot\lambda(t)w(x, v)\left\vert \nabla_{ v}f\right\vert^{p} dxd v\\
=&-p\int_{\mathbb{R}^{2d}}e^{\lambda(t)w(x, v)}\left\vert \nabla_{ v}f\right\vert^{p-2}\nabla_{ v}f\nabla_{x}f\ dxd v-p\int_{\mathbb{R}^{2d}}e^{\lambda(t)w(x, v)}\left\vert \nabla_{ v}f\right\vert^{p-2}\nabla_{ v}fD_{ v}\nabla_{x}f\cdot  v \ dxd v\\
&-p\int_{\mathbb{R}^{2d}}e^{\lambda(t)w(x, v)}\left\vert \nabla_{ v}f\right\vert^{p-2}\nabla_{ v}fD_{ v}( v\wedge B)\nabla_{ v}f\ dxd v-p\int_{\mathbb{R}^{2d}}e^{\lambda(t)w(x, v)}\left\vert \nabla_{ v}f\right\vert^{p-2}\nabla_{ v}f\nabla^{2}_{ v}f v\wedge B\ dxd v\\
&-p\int_{\mathbb{R}^{2d}}e^{\lambda(t)w(x, v)}\left\vert \nabla_{ v}f\right\vert^{p-2}\nabla_{ v}f\nabla_{ v}^{2}f\nabla_{x}K\star\rho_{f}\ dxd v+\int_{\mathbb{R}^{2d}}e^{\lambda(t)w(x, v)}\dot\lambda(t)w(x, v)\left\vert \nabla_{ v}f\right\vert^{p} dxd v=\sum_{k=1}^{6}J_{k}.   \end{align*}
Clearly by Young's inequality we have 
\begin{align}
J_{1}\leq (p-1)\int_{\mathbb{R}^{2d}}e^{\lambda(t)w(x, v)}\left\vert \nabla_{ v}f\right\vert^{p} \ dxd v+\int_{\mathbb{R}^{2d}}e^{\lambda(t)w(x, v)}\left\vert \nabla_{x}f\right\vert^{p}\ dxd v. \label{est J1}      
\end{align}
\\
In addition, integrating by parts we see that 
\begin{align}
J_{2}&=-\int_{\mathbb{R}^{2d}}e^{\lambda(t)w(x, v)} v\nabla_{x}\left\vert \nabla_{ v}f\right\vert^{p}\ dxd v\\&=\int_{\mathbb{R}^{2d}}\lambda(t)e^{\lambda(t)w(x, v)}x\cdot  v\left\vert \nabla_{ v}f\right\vert^{p}\ dxd v 
\leq \int_{\mathbb{R}^{2d}}\lambda(t)e^{\lambda(t)w(x, v)}w(x, v)\left\vert \nabla_{ v}f\right\vert^{p}\ dxd v \label{J2ine}
\end{align}
and 
\begin{align}
J_{4}&=-\int_{\mathbb{R}^{2d}}e^{\lambda(t)w(x, v)}\nabla_{ v}\left\vert \nabla_{ v}f\right\vert^{p} v\wedge B\ dxd v \\
&=\int_{\mathbb{R}^{2d}}\mathrm{div}( v\wedge B)\left\vert \nabla_{ v}f\right\vert^{p}e^{\lambda(t)w(x,v)}\ dxd v\\
&+\int_{\mathbb{R}^{2d}}e^{\lambda(t)w(x, v)}\lambda(t)\nabla_{ v}w(x, v) v\wedge B \left\vert \nabla_{ v}f\right\vert^{p} \ dxd v=0.   
\label{J4vanish}    \end{align}
Furthermore, one has 
\begin{align}
J_{3}\leq p\left\Vert B\right\Vert_{\infty} \int_{\mathbb{R}^{2d}}e^
{\lambda(t)w(x, v)}\left\vert \nabla_{ v}f\right\vert^{p}\ dxd v.  \label{J3 est}  
\end{align}
As for $J_{5}$, integrating by parts we obtain  
\begin{align}
J_5&=\int_{\mathbb{R}^{2d}}\left\vert \nabla_{ v}f\right\vert^{p}e^{\lambda(t)w(x,v)}\lambda(t)\nabla_{ v}w(x, v)\nabla_{x}K\star \rho_{f}\ dxd v \\
&\leq \left\Vert \nabla_{x}K\star \rho_{f}(t,\cdot)\right\Vert_{\infty} \int_{\mathbb{R}^{2d}}e^{\lambda(t)w(x, v)}\left\vert \nabla_{ v}f\right\vert^{p}\lambda(t)w(x, v)\ dxd v \\
&\leq \left\Vert \nabla_x K\right\Vert_{ b ,\infty}\left\Vert \rho_{f}(t,\cdot)\right\Vert_{ b '}\int_{\mathbb{R}^{2d}}e^{\lambda(t)w(x, v)}\left\vert \nabla_{ v}f\right\vert^{p}\lambda(t)w(x, v)\ dxd v \\
&\leq \phi_{0}\left\Vert \nabla_x K\right\Vert_{ b ,\infty}\int_{\mathbb{R}^{2d}}e^{\lambda(t)w(x, v)}\left\vert \nabla_{ v}f\right\vert^{p}\lambda(t)w(x, v)\ dxd v.  \label{J5 est}    
\end{align}
Since by assumption  $(1+\phi_{0}\left\Vert \nabla_x K\right\Vert_{ b ,\infty}) \lambda(t)+\dot\lambda(t)\leq 0$ we infer from the last inequality and from \eqref{J2ine} that  
\begin{align}
 J_{2}+J_{5}+J_{6}\leq 0.
 \label{J2+J5+J6 nonpositive}
\end{align}
To conclude, from \eqref{est J1}-\eqref{J2+J5+J6 nonpositive} we see that we have proved that 
\begin{align}
 \frac{d}{dt}\left\Vert \nabla_{ v}f\right\Vert_{L^{p}(e^{\lambda(t)w(x, v)})}^{p}
 &\leq (p-1+\left\Vert B\right\Vert_{\infty}p)\int_{\mathbb{R}^{2d}}e^{\lambda(t)w(x, v)}\left\vert \nabla_{ v}f\right\vert^{p}\ dxd v \\
 & \quad +\int_{\mathbb{R}^{2d}}e^{\lambda(t)w(x, v)}\left\vert \nabla_{x}f\right\vert^{p}\ dxd v. \label{xi est}      
\end{align}
Gathering the inequalities \eqref{x est} and \eqref{xi est} yields 
\begin{align*}
\frac{d}{dt}S(t)\leq \overline{S}\left(\log(1+S(t)) \right)(1+S(t))\ \mbox{for some constant}\ \overline{S}>0.     
\end{align*}
Solving the above inequality gives 
\begin{align*}
S(t)\leq \overline{S}(1+S(0))^{e^{\overline{S}t}}\ \mbox{for all}\ t\in [0,T]
\end{align*}
as wanted. 
\end{proof}

\section{Improved stability estimate}
\label{sec_stability_estimate}

Magnetic fields cannot do work; their contribution to the Lorentz force is always orthogonal to the velocity. 
\begin{lemma}
\textup{(No‑work identity)}
\label{lem:nowork}
Let $(X, V)$ be a characteristic \eqref{trajectories of Vlasov mag} of \eqref{MAGNETIC Vlasov Intro}. For every $k>0$ and $0\le s\le t\le T$, we have
\[
\frac{d}{dt}|V(t)|^{k}=-k|V(t)|^{k-2} \nabla_x K \star \rho_{f}(t,X(t))\cdot V(t), \qquad |V(t)|\le |V(s)|+\int_{s}^{t}|\nabla_x K \star \rho_{f}(\tau,X(\tau))|\,d\tau.
\]
\end{lemma}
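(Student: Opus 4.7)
The whole content of the lemma lies in the algebraic orthogonality $v \cdot (v \wedge B) = 0$, valid in $d = 2$ and $d = 3$ with the paper's conventions, expressing the fact that the magnetic component of the Lorentz force is perpendicular to the velocity and therefore does no work on the particle. My plan is to first differentiate $|V(t)|^{2}$ along a characteristic solving \eqref{trajectories of Vlasov mag}: the magnetic contribution $-2\, V(t) \cdot (V(t) \wedge B(t, X(t)))$ vanishes identically, leaving
\[
\frac{d}{dt}|V(t)|^{2} = -2\, V(t) \cdot \nabla_x K \star \rho_f(t, X(t)).
\]

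From this baseline, the identity for general $k > 0$ follows by the chain rule, writing $\frac{d}{dt}|V|^{k} = \tfrac{k}{2}|V|^{k-2}\frac{d}{dt}|V|^{2}$, which is legitimate wherever $V(t) \ne 0$ and extends by continuity to $\{V = 0\}$ when $k \ge 2$. For the integral bound I would specialize to $k = 1$ and apply Cauchy--Schwarz inside the identity to obtain
\[
\bigl|\tfrac{d}{dt}|V(t)|\bigr| \le |\nabla_x K \star \rho_f(t, X(t))|,
\]
after which integrating between $s$ and $t$ yields exactly the stated estimate.

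The only mildly delicate point is what happens at times where $V(t) = 0$, since $|V|^{k-2}$ is formally singular there for $k < 2$. The cleanest workaround is to keep everything in terms of $|V|^{2}$ and exploit the fact that $t \mapsto |V(t)|$ is Lipschitz on $[0, T]$ (because $\dot V$ is bounded along the characteristic under the standing classical-solution hypothesis, since both $\nabla_x K \star \rho_f$ and $B$ are bounded), hence a.e.\ differentiable. The identity $2|V|\frac{d}{dt}|V| = \frac{d}{dt}|V|^{2}$ then produces the Cauchy--Schwarz bound almost everywhere, which is all that is needed to integrate. Beyond this minor bookkeeping there is no genuine obstacle --- the lemma is essentially a restatement of the no-work property of magnetic forces.
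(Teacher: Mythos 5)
Your proof follows essentially the same one-line argument as the paper's: differentiate $|V|^k$, invoke the orthogonality $V\cdot(V\wedge B)=0$, and integrate the $k=1$ case to obtain the bound. The extra care you take at the set $\{V=0\}$ (passing through $|V|^2$ and using Lipschitz continuity of $t\mapsto|V(t)|$) is a sound way to make the a.e.\ differentiability rigorous for $k<2$, a detail the paper leaves implicit, but it does not change the underlying approach.
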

\begin{proof}
Differentiate $|V|^{k}$ and note that the term $V\cdot(V\wedge B)$ vanishes. 
Integrating the case $k=1$ yields the bound.
\end{proof}
Let $f_1,f_2\ge0$ be two solutions of the magnetized Vlasov-Poisson equation \eqref{MAGNETIC Vlasov Intro} with common mass~$1$. For $p\ge1$ and $\lambda>0$ we recall the $p$-kinetic Wasserstein distance introduced in \cite{iacobelli_new_2022};
\[
W^p_{\lambda,p}\left(\mu,\nu\right) \coloneqq \inf_{\pi\in\Pi(\mu,\nu)}\int_{(\XRd)^2} \lambda|x-y|^p+|v-w|^p \:d\pi.
\]
Setting $\lambda=1$ gives the usual $W_p(\mu,\nu)$.

Let $Z_i \coloneqq (X_i,V_i)$ be the characteristics \eqref{trajectories of Vlasov mag} of $f_i$, $i = 1,2$, starting from $(x,v)$.
Let $\pi_0$ be an optimal coupling for $W_{p}(f_1^{\mathrm{in}},f_2^{\mathrm{in}})$ and $\pi_t \coloneqq (\pi_0)_\#(Z_1(t), Z_2(t))$. Define
\begin{align*}
    D_p(t) &\coloneqq\int_{(\XRd)^2} \lambda(t)|X_1(t;x,v)-X_2(t;y,w)|^p+|V_1(t;x,v)-V_2(t;y,w)|^p\:d\pi_0(x,v,y,w) \\
        &= \int_{(\XRd)^2} \lambda(t)|x-y|^p+|v-w|^p\:d\pi_t(x,v,y,w)
\end{align*}
where we set $\lambda(t) \coloneqq \abs{\log D_p(t)}^{\tfrac{p}{2}}$. We recall that $D_p$ controls the $p$-Wasserstein distance between the two solutions;
\begin{equation}\label{ineq:D_p controls W_p^p}
    W_p^p\left(\rho_{f_1}, \rho_{f_2}\right) \le \frac{D_p}{\lambda}, \quad W_p^p\left(f_1, f_2\right) \le D_p.
\end{equation}
\begin{proof} (\textit{of Theorem \ref{thm:Iacobelli's W_p}}).
Differentiating $D_p$ with respect to time yields
\begin{align*}
    \frac{1}{p}\dot D_p = &\:\frac{1}{p} \int_{(\XRd)^2} \dot{\lambda}\abs{X_1 - X_2}^p \: d\pi_0 \\
        &+ \int_{(\XRd)^2} \lambda\abs{X_1 - X_2}^{p-2}(X_1 - X_2)\cdot(V_1 - V_2) \: d\pi_0 \\
        &+ \int_{(\XRd)^2} \abs{V_1 - V_2}^{p-2}(V_1 - V_2)\cdot(-\nabla_x K \star \rho_{f_1} + \nabla_x K \star \rho_{f_2}) \: d\pi_0 \\
        &+ \int_{(\XRd)^2} \abs{V_1 - V_2}^{p-2} (V_1 - V_2) \cdot \left[V_2 \wedge \left(B(X_2) - B(X_1)\right)\right] \: d\pi_0 \coloneqq I_1 + \cdots + I_4.
\end{align*}
$I_1$ is special in the sense that it is the only term containing a time derivative of $\lambda$, which, we recall, depends on $D_p$ itself; thus we will estimate it later. The bounds on $I_2$ and $I_3$ follow exactly the lines of the proof of \cite[Theorem 1.11]{iacobelli_stability_2024}. 

For $I_2$, H\"older's inequality with respect to the measure $\pi_0$ directly yields
\begin{equation}\label{ineq:I_2 bound}
    I_2(t) \le \lambda^{\frac{1}{p}}(t) D_p(t). 
\end{equation}

For $I_3$, one splits the difference of force fields as
\begin{multline*}
    \abs{\nabla_x K \star \rho_{f_1}(X_1) - \nabla_x K \star \rho_{f_2}(X_2)} \\ \le \abs{\nabla_x K \star \rho_{f_1}(X_1) -  \nabla_x K \star \rho_{f_1}(X_2)} + \abs{\nabla_x K \star \rho_{f_1}(X_2) - \nabla_x K \star \rho_{f_2}(X_2)}.
\end{multline*}
The first term uses the $\log$-Lipschitz regularity of the force field satisfying a (screened) Poisson equation with bounded density (straightforward modification of e.g. \cite[Lemma 3.1]{loeper2006uniqueness} and \cite[Lemma 3.2]{han2017quasineutral}).
\begin{lemma}\label{thm:Log-Lip estimate}
    There is a constant $C > 0$ such that 
    \begin{equation*}
        \norm{\nabla_x K \star \rho_{f_i}(t)}_{\infty} \le C\left(1 + \norm{\rho_{f_i}(t)}_{\infty}\right)
    \end{equation*}
    and for all $x, y \in \Rd$ with $\abs{x - y} < 1/e$, it holds
    \begin{equation*}\label{eq:Log-Lip electrif field estimate}
        \abs{\nabla_x K \star \rho_{f_i}(t, x) - \nabla_x K \star \rho_{f_i}(t, y)} \le C\left(1 + \norm{\rho_{f_i}(t)}_{\infty}\right)\abs{x - y}\log\left(\frac{4\sqrt{3}}{\abs{x - y}}\right). 
    \end{equation*}
\end{lemma}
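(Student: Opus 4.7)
The plan is to establish both bounds by decomposing the convolution integral at well-chosen length scales and using the integrability of $\nabla_x K$ near the origin together with the finite total mass $\int \rho_{f_i}(t,\cdot)\,dx = 1$.

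For the uniform bound, split
\[
\nabla_x K \star \rho_{f_i}(t,x) = \int_{|z-x|<1} \nabla_x K(x-z)\rho_{f_i}(t,z)\,dz + \int_{|z-x|\ge 1} \nabla_x K(x-z)\rho_{f_i}(t,z)\,dz.
\]
Since $|\nabla_x K(w)| \le C|w|^{-(d-1)}$ for $d=2,3$ (with an extra factor $e^{-\kappa|w|}$ in the screened 3D case), the radially-integrable singularity gives that the first term is $\le C\|\rho_{f_i}(t)\|_\infty$. For the second, one uses $|\nabla_x K(w)| \le C$ on $|w|\ge 1$ (with exponential decay in 3D), so the integral is $\le C\|\rho_{f_i}(t)\|_1 = C$. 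Summing yields the first estimate.

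For the log-Lipschitz bound, set $r = |x-y| < 1/e$ and $\bar x = (x+y)/2$. I would decompose the integration domain into three regions: (a) $\min(|z-x|,|z-y|) \le 2r$, (b) $2r < |z-\bar x| \le 1$, and (c) $|z-\bar x| > 1$. On (a), bound $|\nabla_x K(x-z) - \nabla_x K(y-z)|$ by $|\nabla_x K(x-z)| + |\nabla_x K(y-z)|$ and integrate each over a ball of radius $3r$ using $\|\rho_{f_i}(t)\|_\infty$; this contributes $\le C\|\rho_{f_i}(t)\|_\infty\, r$. On (b), apply the mean value theorem together with $|\nabla_x^2 K(w)| \le C|w|^{-d}$ away from the origin, so that on the segment $[x,y]$ one has $r\sup_{\tau \in [x,y]} |\nabla_x^2 K(\tau - z)| \le C r |z-\bar x|^{-d}$; integrating over the annulus $2r \le |z-\bar x| \le 1$ in $\R^d$ produces the crucial factor $C r \log(C/r)$, multiplied by $\|\rho_{f_i}(t)\|_\infty$. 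On (c), the bound $|\nabla_x^2 K(w)| \le C$ integrated against $\|\rho_{f_i}(t)\|_1 = 1$ yields $\le C r$. Summing the three contributions and absorbing lower-order terms gives the announced inequality.

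The main technical obstacle is the bookkeeping of constants inside the logarithm to recover the explicit factor $4\sqrt{3}/|x-y|$; this requires tracking precisely the geometry of the annulus around $\bar x$ relative to the balls around $x$ and $y$, exactly as in the proofs of \cite[Lemma 3.1]{loeper2006uniqueness} and \cite[Lemma 3.2]{han2017quasineutral}. The only modifications needed for the present setting are (i) replacing the Newtonian kernel by its screened version on $\R^3$, where the factor $e^{-\kappa|w|}$ is smooth near $0$ and only improves all far-field estimates, and (ii) working on $\R^d$ rather than $\T^d$, which eliminates the periodic corrections in \cite{han2017quasineutral}. Neither modification alters the structure of the argument, so a direct adaptation of the cited proofs suffices.
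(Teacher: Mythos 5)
Your proposal is correct and takes the same route the paper itself points to: the paper does not write out a proof of this lemma, but states it is a ``straightforward modification'' of Loeper's \cite[Lemma~3.1]{loeper2006uniqueness} and Han-Kwan--Iacobelli's \cite[Lemma~3.2]{han2017quasineutral}, and your near/annulus/far decomposition with $|\nabla K|\lesssim |w|^{-(d-1)}$ and $|\nabla^2 K|\lesssim |w|^{-d}$ is precisely that standard argument, with the two modifications (Yukawa factor improving the far field, working on $\R^d$ rather than $\T^d$) correctly identified. One small caveat: the three regions you write down do not actually cover $\R^d$ and also overlap slightly (a point $z$ with $|z-\bar x|$ slightly below $2r$ can have $\min(|z-x|,|z-y|)>2r$), so to make the argument airtight you should take region (a) to be a slightly larger ball, say $\min(|z-x|,|z-y|)\le 3r$, and start the annulus at $|z-\bar x|\ge 2r$; this is the kind of bookkeeping you already flag, and it is exactly where the explicit constant $4\sqrt{3}$ in the statement comes from in the cited references.
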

\noindent The second term corresponds to the $L^p$-estimate on the difference of force fields for the case $1 < p < +\infty$.
\begin{proposition}\label{thm:L^p estimate}
    There is a constant $C_{\HW} > 0$ that only depends on $1 < p < +\infty$ such that
    \begin{equation*}\label{eq:L^p force field estimate}
         \norm{\nabla_x K \star \rho_{f_1}(t) -\nabla_x K \star \rho_{f_2}(t)}_{p} \le C_{\HW} \max \left\{ \norm{\rho_{f_1}(t)}_{\infty}, \norm{\rho_{f_2}(t)}_{\infty} \right\}^{\frac{1}{p'}}W_p\left(\rho_{f_1}(t), \rho_{f_2}(t)\right).
    \end{equation*}
\end{proposition}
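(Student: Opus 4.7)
The plan is to combine a displacement-interpolation representation of $\rho_{f_2} - \rho_{f_1}$ with the Calder\'on--Zygmund $L^p$-boundedness of the Hessian kernel $\nabla_x^2 K$. Since $\rho_{f_1}(t)$ is bounded, hence absolutely continuous, the Gangbo--McCann theorem provides a $W_p$-optimal transport map $T$ from $\rho_{f_1}(t)$ to $\rho_{f_2}(t)$; for brevity the $t$-dependence is suppressed in the notation below. Set
\begin{equation*}
    T_s \coloneqq (1-s)\Id + sT, \qquad \rho_s \coloneqq (T_s)_\# \rho_{f_1},
\end{equation*}
and let $u_s$ be the velocity field determined by $u_s \circ T_s = T - \Id$. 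Then $\rho_0 = \rho_{f_1}$, $\rho_1 = \rho_{f_2}$, and $(\rho_s, u_s)$ satisfies the continuity equation $\partial_s \rho_s + \divergence(u_s \rho_s) = 0$ in the sense of distributions.

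Differentiating $E_s \coloneqq -\nabla_x K \star \rho_s$ in $s$ and transferring the divergence onto the kernel via integration by parts yields the identity
\begin{equation*}
    \nabla_x K \star \rho_{f_1} - \nabla_x K \star \rho_{f_2} = \int_0^1 \nabla_x^2 K \star (u_s \rho_s)\, ds.
\end{equation*}
For both kernels in \eqref{Coulomb def}, the matrix-valued operator $\nabla_x^2 K$ decomposes as a classical Calder\'on--Zygmund convolution kernel plus a smooth, integrable remainder (present only in the screened case), so Minkowski's integral inequality together with the standard $L^p$-theory for singular integrals gives
\begin{equation*}
    \|\nabla_x K \star \rho_{f_1} - \nabla_x K \star \rho_{f_2}\|_p \le C_p \int_0^1 \|u_s \rho_s\|_p\, ds.
\end{equation*}
H\"older's inequality with respect to $\rho_s$, combined with the Lagrangian expression for $W_p$, then gives
\begin{equation*}
    \|u_s \rho_s\|_p^p = \int |u_s|^p \rho_s^{p-1}\, d\rho_s \le \|\rho_s\|_\infty^{p-1} \int |T(x) - x|^p\, d\rho_{f_1}(x) = \|\rho_s\|_\infty^{p-1} W_p^p(\rho_{f_1}, \rho_{f_2}),
\end{equation*}
reducing the proof to a uniform $L^\infty$-bound on the displacement interpolation.

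The main obstacle is the control $\sup_{s \in [0, 1]} \|\rho_s\|_\infty \le C \max\{\|\rho_{f_1}\|_\infty, \|\rho_{f_2}\|_\infty\}$, which is not a priori equivalent to displacement convexity of the $L^\infty$-norm. The way to handle it is as follows: for $p = 2$ the Brenier map $T = \nabla \phi$ is the gradient of a convex function, so $DT_s = (1-s)I + s D^2 \phi$ is symmetric positive semi-definite, and Hadamard's inequality gives $\det DT_s \ge (1-s)^d$ $\rho_{f_1}$-a.e.; the Monge--Amp\`ere identity $\rho_s \circ T_s \cdot \det DT_s = \rho_{f_1}$ then forces $\|\rho_s\|_\infty \le \|\rho_{f_1}\|_\infty /(1-s)^d$. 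Running the reverse interpolation from $\rho_{f_2}$ via $T^{-1}$ produces the symmetric bound $\|\rho_s\|_\infty \le \|\rho_{f_2}\|_\infty/s^d$. Splitting $\int_0^1 \|\rho_s\|_\infty^{1/p'}\, ds$ at $s = 1/2$ and applying each bound on its respective half absorbs the apparent blow-up and yields the factor $\max\{\|\rho_{f_1}\|_\infty, \|\rho_{f_2}\|_\infty\}^{1/p'}$ with a constant depending only on $d$ and $p$. For general $1 < p < +\infty$, the same splitting carries through upon replacing the Brenier structure by the $c$-concavity and semi-convexity of the Gangbo--McCann potential, producing the claimed constant $C_{\HW}$ depending only on $p$.
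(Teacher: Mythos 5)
Your argument is correct, and it is genuinely different in organization from the paper's. The paper proceeds by duality: it writes $\norm{\nabla_x K\star(\rho_{f_1}-\rho_{f_2})}_p$ as a supremum of pairings against test functions, applies the Calder\'on--Zygmund inequality to pass to $\norm{\nabla\phi_j}_{p'}\le C\norm{\varphi}_{p'}$, identifies the result as $\norm{\rho_{f_1}-\rho_{f_2}}_{\dot W^{-1,p}}$, and then invokes a cited bound $\norm{\rho_{f_1}-\rho_{f_2}}_{\dot W^{-1,p}}\le \max\{\norm{\rho_{f_1}}_\infty,\norm{\rho_{f_2}}_\infty\}^{1/p'}W_p(\rho_{f_1},\rho_{f_2})$. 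You instead work primally, representing the difference of force fields via the Benamou--Brenier continuity equation along the displacement interpolation, hitting it with Calder\'on--Zygmund on the Hessian kernel $\nabla_x^2 K$ (plus the integrable remainder for $\kappa>0$), and then closing with the uniform $L^\infty$ bound on the interpolant. Both routes of course rest on the same two ingredients --- CZ theory and the $L^\infty$ bound along $W_p$-geodesics --- but yours re-derives the latter while the paper cites it. Your derivation is fine for $p=2$: splitting at $s=1/2$ and using the forward bound $\norm{\rho_s}_\infty\le\norm{\rho_{f_1}}_\infty/(1-s)^d$ on $[0,1/2]$ and the reverse bound $\norm{\rho_s}_\infty\le\norm{\rho_{f_2}}_\infty/s^d$ on $[1/2,1]$ indeed keeps both integrals bounded (each factor is $\le 2^d$ on its half), giving a constant $2^{d/p'}$, which is fine since $d\in\{2,3\}$ is fixed. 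Two small remarks: the bound $\det DT_s\ge(1-s)^d$ follows from the eigenvalue bound $DT_s=(1-s)I+sD^2\phi\succeq(1-s)I$, not from Hadamard's inequality (which is an upper bound on determinants); and the extension of the Jacobian lower bound to $p\neq2$ via the Gangbo--McCann structure is asserted but not carried out --- it is indeed known, but since for $p\neq2$ the optimal map is not a gradient of a convex function, the argument genuinely needs the $c$-concavity machinery, and it would be worth giving a reference (e.g.\ Agueh, or Santambrogio's monograph) rather than a one-line gesture.
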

\begin{proof}
    The case $d=2$ with the Coulomb potential is the content of \cite[Proposition 1.8]{iacobelli_stability_2024}. It remains to deal with the case $d=3$ corresponding to the Yukawa potential (or Coulomb when $\kappa = 0$); that is, the Green function for the screened Poisson equation $\pm(\Delta - \kappa^2) K \star \rho_f = \rho_f$. Let $\varphi \in C_c^\infty(\R^d)$ be a test function and set $1/p + 1/p' \coloneqq 1$. By duality of $L^p$-norm, for every $1 \le j \le 3$,
    \begin{equation*}
        \norm{\partial_{x_j} K \star \left(\rho_{f_1} - \rho_{f_2}\right) }_p = \sup_{\varphi \in C_c^\infty(\R^d) \,:\, \norm{\varphi}_{p'} \le 1 } \int_{\Rd} \left(\rho_{f_1} - \rho_{f_2}\right) \partial_{x_j} K \star \varphi \: dx.
    \end{equation*}
    We denote by $\phi_j \coloneqq \partial_{x_j} K \star \varphi$, and Calderon--Zygmund's inequality \cite[Theorem II.11.4]{Galdi2011} yields $\norm{\nabla \phi_j}_{p'} \le \widetilde{C_{\HW}} \norm{\varphi}_{p'}$ for some constant $\widetilde{C_{\HW}} > 0$ that only depends on $1 < p <+\infty$. Replacing the supremum over the larger set
    \begin{equation*}
        \left\{\phi \in \dot{W}^{1,p'}(\Rd) \, : \, \norm{\nabla\phi}_{p'}/\widetilde{C_{\HW}} \le 1\right\} \supset \left\{ \varphi \in C_c^\infty(\Rd) \, : \norm{\varphi}_{p'} \le 1 \right\},
    \end{equation*}
    we obtain, by definition of dual homogeneous Sobolev norm, 
    \begin{align*}
        \norm{\partial_{x_j} K \star \left(\rho_{f_1}(t) - \rho_{f_2}(t)\right) }_p &\le \widetilde{C_{\HW}} \sup_{\phi \in \dot{W}^{1,p'}(\Rd) \, : \, \norm{\nabla\phi}_{p'} \le 1} \int_{\Rd} \left(\rho_{f_2}(t) - \rho_{f_1}(t)\right) \nabla\phi \: dx \\
            &\coloneqq \widetilde{C_{\HW}} \norm{\rho_{f_2}(t) - \rho_{f_1}(t)}_{\dot{W}^{-1,p}}.
    \end{align*}
    We conclude following the proof of \cite[Proposition 1.8]{iacobelli_stability_2024}, which gives
    \begin{equation*}
        \norm{\rho_{f_2}(t) - \rho_{f_1}(t)}_{\dot{W}^{-1,p}} \le \max \left\{ \norm{\rho_{f_1}(t)}_{\infty}, \norm{\rho_{f_2}(t)}_{\infty} \right\}^{\frac{1}{p'}}W_p\left(\rho_{f_1}(t), \rho_{f_2}(t)\right).
    \end{equation*}
\end{proof}
\noindent Then combining both Lemma \ref{thm:Log-Lip estimate} and Proposition \ref{thm:L^p estimate} with \eqref{ineq:D_p controls W_p^p}, one obtains, as long as the regime
\begin{equation}\label{eq:regime 1 over e}
    D_p(t) \le 1/e \text{ on } [0, T) 
\end{equation}
prevails,
\begin{equation}\label{ineq:I_3 bound}
    I_3(t) \le C_{p} \widetilde A(t) \lambda^{-\frac{1}{p}}(t) D_p(t)\abs{\log D_p(t)}, 
\end{equation}
where
\begin{equation*}
    \widetilde A(t) \coloneqq \max\left\{1, \norm{\rho_{f_2}(t)}_\infty + \norm{\rho_{f_1}(t)}_\infty^{\frac{1}{p}} \max\left\{\norm{\rho_{f_1}(t)}_\infty, \norm{\rho_{f_2}(t)}_\infty\right\}^{\frac{1}{p'}} \right\}
\end{equation*}
and $C_{p}$ is a constant that depends only on $1 < p <+\infty$. In the case $p = 1$, the coupling argument found in \cite{junne_stability_2025} yields the same estimate as \eqref{ineq:I_3 bound} with some constant $C_{1}$.

For $I_4$, we use Lemma \ref{lem:nowork}, namely, the no-work identity, to obtain a better control of the isolated velocity flow $V_2$ compared to \cite{rege2025stability} and \cite{junne_stability_2025}:
\noindent Combining H\"older's inequality with the no-work identity, we obtain
\begin{equation*}
    I_4(t) \le D_p^{\frac{1}{p'}}(t) \left(I_{4,1}(t) + I_{4,2}(t)\right),
\end{equation*}
where
\begin{multline*}
    I_{4,1}(t) \le \left(\int_{(\XRd)^2} \abs{V_2(0)}^p \abs{B(X_1) - B(X_2)}^p \: d\pi_0\right)^{\frac{1}{p}}, \\
    I_{4,2}(t) \le \left(\int_{(\XRd)^2} \left(\int_0^t \abs{\nabla_x K \star \rho_{f_2}(X_2(\tau))} \: d\tau\right)^p\abs{B(X_1) - B(X_2)}^p \: d\pi_0\right)^{\frac{1}{p}}.
\end{multline*}
The first term is more subtle, so we will start with the latter. Thanks to the uniform bound on the force field we obtain
\begin{equation*}
    I_{4,2}(t) \le \left(\int_0^t C\left(1 + \norm{\rho_{f_2}(\tau)}_{\infty}\right) \: d\tau\right)\left(\int_{(\XRd)^2} \abs{B(X_1) - B(X_2)}^p \: d\pi_0\right)^{\frac{1}{p}}.
\end{equation*}
Now, since the external magnetic field belongs to $L^\infty([0, T); W^{1,\infty}(\X))$, it follows that $\abs{B(X_1) - B(X_2)} \le \widetilde C_B \abs{X_1 - X_2}$ for some constant $\widetilde C_B > 0$ and thus
\begin{equation}\label{ineq:I_4,2 bound}
    I_{4,2}(t) \le C_{B,2} \left(\int_0^t \widetilde A(\tau) \: d\tau\right) \lambda^{-\frac{1}{p}}(t) D_p^{\frac{1}{p}}(t), \quad C_{B,2} \coloneqq 2C\widetilde C_B.
\end{equation}

Regarding the first term $I_{4,1}$, note that $B$ is H\"older continuous for every exponent $\alpha \in (0, 1)$; $\abs{B(X_1) - B(X_2)} \le \overline{C_B}\abs{X_1 - X_2}^\alpha$. Combining this observation with H\"older's inequality for the exponents $1/\alpha + 1/(1/\alpha)' = 1$, we obtain
\begin{align*}
    I_{4,1}(t) &\le \overline{C_B}\left(\int_{(\XRd)^2} \abs{w}^p\abs{X_1 - X_2}^{\alpha p} \: d\pi_0(x,v,y,w)\right)^{\frac{1}{p}} \\
        &\le \overline{C_B}\left(\int_{\XRd} \abs{w}^{\left(\frac{1}{\alpha}\right)'p} \: df_2(0; y, w)\right)^{\frac{1}{(1/\alpha)'p}}\left(\frac{D_p}{\lambda}\right)^{\frac{\alpha}{p}}. 
\end{align*}
Of course, one wishes $\alpha$ to be the closest to $1$. We set, under the regime \eqref{eq:regime 1 over e},
\begin{equation*}
    \alpha \coloneqq 1 - \frac{1}{\abs{\log (D_p/\lambda)}}, \quad \Big(\left(1/\alpha\right)' = \abs{\log(D_p/\lambda)}\Big)
\end{equation*}
so that both $\alpha$ belongs to $(0, 1)$ and
\begin{equation*}
    \left(\frac{D_p}{\lambda}\right)^{\frac{\alpha}{p}} = e^{-1}\lambda^{-\frac{1}{p}}D_p^{\frac{1}{p}}.
\end{equation*}
Note that the choice $\lambda \coloneqq \abs{\log D_p}^{p/2}$ yields the following elementary inequality in the regime:
\begin{equation*}
    \abs{\log \left(\frac{D_p}{\lambda}\right)} \le \left(2 + \log\frac{p}{2}\right) \abs{\log D_p}.
\end{equation*}
Now, we recall the weighted Yudovich norm of a function $g$ with respect to a measure $\mu$ and a growth function $\Theta$;
\begin{equation*}
    \norm{g}_{Y^\Theta(d\mu)} \coloneqq \sup_{1 \le r < +\infty} \frac{\norm{g}_{L^r(d\mu)}}{\Theta(r)}.
\end{equation*}
It follows that
\begin{align*}
    \left(\int_{\XRd} \abs{w}^{\left(\frac{1}{\alpha}\right)'p} \: df_2^{\mathrm{in}}(y, w)\right)^{\frac{1}{(1/\alpha)'p}} \le \norm{\abs{w}}_{Y^\Theta(df_2^{\mathrm{in}}(y, w))} \Theta\left(p(1/\alpha)'\right).
\end{align*}
Combining these bounds, we get
\begin{equation}\label{ineq:I_4,1 bound}
    I_{4,1}(t) \le C_{B,1} \norm{\abs{w}}_{Y^\Theta(df_2^{\mathrm{in}}(y, w))} \lambda(t)^{-\frac{1}{p}} D_p(t)^{\frac{1}{p}} \Theta\left(\overline{C_p} \abs{\log D_p(t)}\right),
\end{equation}
where $C_{B,1} \coloneqq e^{-1}\overline{C_B}$ and $\overline{C_p} \coloneqq p(2 + \log(p/2))$.
Note that since $M_k(0) \le (C_0 k)^k$ for all $k > 0$ is assumed to hold for one of two solutions, say $f_2$, which corresponds to the growth function $\Theta(r) \coloneqq r$, then $\norm{\abs{w}}_{Y^\Theta(df_2^{\mathrm{in}}(y, w))} \le C_0$ and we obtain, combining the two estimates \eqref{ineq:I_4,1 bound} and \eqref{ineq:I_4,2 bound} respectively for $I_{4,1}(t)$ and $I_{4,2}(t)$, that
\begin{equation}\label{ineq:I_4 bound}
    I_4(t) \le \widetilde{C}_{p,B} \left(\int_0^t \widetilde A^p(\tau) \: d\tau\right)^{\frac{1}{p}} \lambda^{-\frac{1}{p}}(t) D_p(t) \abs{\log D_p(t)}, \quad \widetilde{C}_{p,B} \coloneqq \overline{C_p}C_0 C_{B,1} + C_{B,2}. 
\end{equation}

For $I_1$, note that if $\dot D_p \le 0$ there is nothing to do. Otherwise, the specific choice of $\lambda \coloneqq \abs{\log D_p}^{p/2}$ yields
\begin{equation*}
    I_1(t) = \frac{1}{2}\abs{\log D_p(t)}^{\frac{p}{2}-1} \left(\frac{-\dot D_p(t)}{D_p(t)}\right) \int_{(\XRd)^2} \abs{X_1 - X_2}^p \: d\pi_0 \le 0,
\end{equation*}
and thus, independently of the sign of $\dot D_p(t)$, in the regime \eqref{eq:regime 1 over e} it holds
\begin{align*}
    \frac{1}{p}\dot D_p(t) &\le I_2(t) + I_3(t) + I_4(t) \\
        &\le \lambda^{\frac{1}{p}}(t) D_p(t) + C_{p} \widetilde A(t) \lambda^{-\frac{1}{p}}(t) D_p(t)\abs{\log D_p(t)} + \widetilde{C}_{p,B} \left(\int_0^t \widetilde A(\tau) \: d\tau\right) \lambda^{-\frac{1}{p}}(t) D_p(t) \abs{\log D_p(t)} \\
        &\le \frac{2}{p}C_{p,B} A(t) D_p(t) \sqrt{\abs{\log D_p(t)}},
\end{align*}
where $C_{p,B} \coloneqq \tfrac{p}{2}(1 + C_{p} + \widetilde{C}_{p,B})$ and
\begin{equation}\label{eq:def A}
    A(t) \coloneqq \widetilde A(t) + \int_0^t \widetilde A(\tau) \: d\tau.
\end{equation}
Solving the differential inequality, we obtain
\begin{equation}\label{ineq:differential inequality D_p}
    D_p(t) \le \exp\left\{-\left(\sqrt{\abs{\log D_p(0)}} - C_{p,B}\int_0^t A(\tau) \: d\tau\right)^{2}\right\}.
\end{equation}
Note that the regime \eqref{eq:regime 1 over e} implies that \eqref{ineq:differential inequality D_p} holds if
\begin{equation}\label{ineq:initial condition regime}
    \sqrt{\abs{\log D_p(0)}} \ge C_{p,B}\int_0^t A(\tau) \: d\tau + 1.
\end{equation}
Now, recall that $D_p$ controls the $p$-Wasserstein distance between the two solutions; at time $t$ it holds $D_p(t) \le W_p^p\left(f_1(t), f_2(t)\right)$, while initially, by $W_p$-optimality of the coupling $\pi_0$,
\begin{equation*}
    D_p(0) \le \lambda(0) \int_{(\XRd)^2} \abs{x-y}^p + \abs{v-w}^p \: d\pi_0 = \abs{\log D_p(0)}^{\frac{p}{2}} W_p^p\left(f_1^{\mathrm{in}}, f_2^{\mathrm{in}}\right),
\end{equation*}
and, since near the origin the inverse of the function $s \mapsto s/\abs{\log s}^{\tfrac{p}{2}}$ behaves like $\tau \mapsto \tau\abs{\log \tau}^{\tfrac{p}{2}}$, there is a universal constant $c_0 > 0$ such that for sufficiently small initial Wasserstein distance $W_p^p(f_1^{\mathrm{in}}, f_2^{\mathrm{in}}) < c_0$ it holds
\begin{equation*}
    D_p(0) \le W_p^p\left(f_1^{\mathrm{in}}, f_2^{\mathrm{in}}\right) \sqrt{\abs{\log W_p^p\left(f_1^{\mathrm{in}}, f_2^{\mathrm{in}}\right)}}^p.
\end{equation*}
Thus \eqref{ineq:differential inequality D_p} implies
\begin{equation}\label{ineq:stability estimate}
        W_p^p\left(f_1(t), f_2(t)\right) \\ \le \exp\left\{-\left(\sqrt{\abs{\log\Bigg\{W_p^p(f_1^{\mathrm{in}}, f_2^{\mathrm{in}}\sqrt{\abs{\log W_p^p(f_1^{\mathrm{in}}, f_2^{\mathrm{in}})}}^p\Bigg\}}} \!- C_{p,B}\int_0^t A(\tau) \: d\tau\right)^2\right\}
    \end{equation}
as long as both $W_p^p(f_1^{\mathrm{in}}, f_2^{\mathrm{in}}) < c_0$ and \eqref{ineq:initial condition regime} holds; that is,
\begin{equation}\label{ineq:initial condition regime in terms of W_p^p}
    \sqrt{\abs{\log \left(W_p^p\left(f_1(0), f_2(0)\right) \sqrt{\abs{\log W_p^p\left(f_1(0), f_2(0)\right)}}^p\right)}} \ge C_{p,B}\int_0^t A(\tau) \: d\tau + 1.  
\end{equation}
\end{proof}
    
\begin{remark}
    The improvement coming from the no-work identity can be seen as follows: Recall that if one were to look at the evolution of the velocity itself and not its norm, then one obtains the following estimate (see \cite[Lemma 2.8]{rege2025stability}): for all $t \in [0, T), v \in \Rd$, it holds (say $\norm{B}_\infty < +\infty$)
    \begin{equation*}\label{ineq:estimate velocity field E B}
        \norm{V(t;\cdot,v)}_{\infty} \le \abs{v}e^{t\norm{B}_{\infty}} + \int_0^t \norm{\nabla_x K \star \rho_{f}(\tau)}_{\infty}e^{(t-\tau)\norm{B}_{\infty}} \: d\tau.
    \end{equation*}
    Now, this estimate would introduce, among other things, a prefactor $e^{t\norm{B}_{\infty}}$ in the estimate of $I_{4}(t)$ which becomes large when $t$ itself is; $\int_0^t \widetilde A(\tau) d\tau$ is replaced by $\int_0^t \widetilde A(\tau)e^{(t-\tau)\norm{B}_{\infty}}$ in \eqref{eq:def A}. This undermines the goal, namely, obtaining a large time interval $[0, T(\delta)]$ such that $W_p(f_1(t), f_2(t)) \lesssim 1$ given that $W_p(f_1^{\mathrm{in}}, f_2^{\mathrm{in}}) = \delta \ll 1$. Indeed, in light of \eqref{ineq:initial condition regime in terms of W_p^p}, then $T(\delta) \sim \log |\log \delta|$ without using the no-work identity, which corresponds to the time-interval originally obtained by Loeper \cite{loeper2006uniqueness} for the Vlasov-Poisson equation, while $T(\delta) \sim \sqrt{\abs{\log \delta}}$ with the no-work identity, coinciding with the improved control on time interval obtained using the kinetic Wasserstein distance for the Vlasov-Poisson equation found in \cite{iacobelli_new_2022} and \cite{iacobelli_stability_2024}.
\end{remark}

\section*{Acknowledgements}
We thank Hezekiah Grayer II for highlighting the no‑work identity. M. Iacobelli and A. Gagnebin acknowledge the support of the SNSF Starting grant  \emph{Challenges and Breakthroughs in the Mathematics of Plasmas},  TMSGI2\_226018. J. Junné acknowledges the support of the research program Open Competitie ENW (partly) financed by the Dutch Research Council (NWO) \href{https://www.nwo.nl/en/projects/ocenwm20251}{\includegraphics[height=\fontcharht\font`\B]{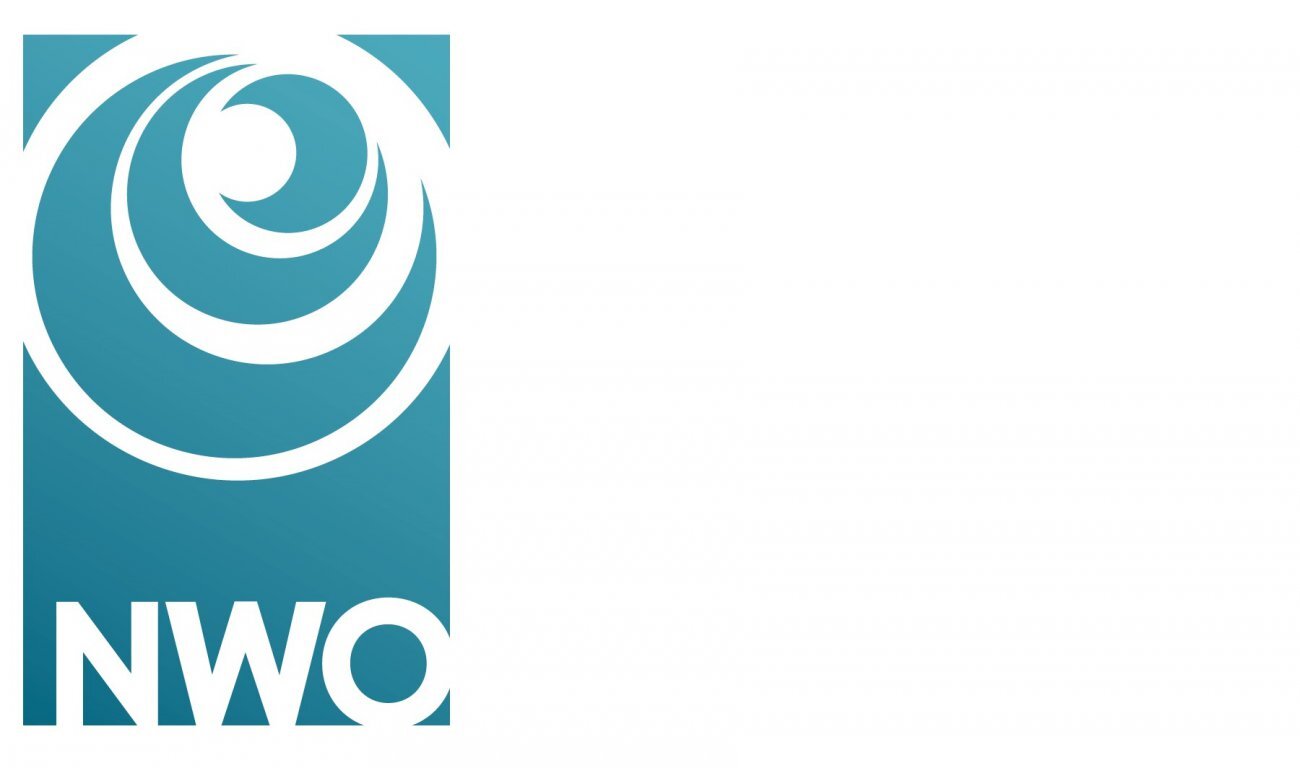}} for the project \emph{Interacting particle systems and Riemannian geometry} - OCENW.M20.251.


\bibliographystyle{abbrv}
\bibliography{biblio}

\end{document}